\patchcmd{\@setref}{??}{\bfseries\color{red} [Label]}{}{}
\def\abx@missing@entry#1{%
	{\bfseries\color{red} #1}}
\patchcmd{\@@setcref}         {??}{\color{red} [\filename{#2}]}{}{}
\patchcmd{\@@setcref}         {??}{\color{red} [\filename{#2}]}{}{}
\patchcmd{\@@setcrefrange}    {??}{\color{red} [\filename{#2}]}{}{}
\patchcmd{\@@setcrefrange}    {??}{\color{red} [\filename{#2}]}{}{}
\patchcmd{\@@setcrefrange}    {??}{\color{red} [\filename{#2}]}{}{}
\patchcmd{\@@setcrefrange}    {??}{\color{red} [\filename{#2}]}{}{}
\patchcmd{\@@setcrefrange}    {??}{\color{red} [\filename{#2}]}{}{}
\patchcmd{\@@setcrefrange}    {??}{\color{red} [\filename{#2}]}{}{}
\patchcmd{\@@setnamecref}     {??}{\color{red} [\filename{#2}]}{}{}
\patchcmd{\@@setnamecref}     {??}{\color{red} [\filename{#2}]}{}{}
\patchcmd{\@@setcpageref}     {??}{\color{red} [\filename{#2}]}{}{}
\patchcmd{\@@setcpageref}     {??}{\color{red} [\filename{#2}]}{}{}
\patchcmd{\@@setcpagerefrange}{??}{\color{red} [\filename{#2}]}{}{}
\patchcmd{\@@setcpagerefrange}{??}{\color{red} [\filename{#2}]}{}{}
\patchcmd{\@@setcpagerefrange}{??}{\color{red} [\filename{#2}]}{}{}
\patchcmd{\@@setcpagerefrange}{??}{\color{red} [\filename{#2}]}{}{}
\patchcmd{\@@setcpagerefrange}{??}{\color{red} [\filename{#2}]}{}{}
\patchcmd{\@@cref}            {??}{\color{red} [\filename{#2}]}{}{}
\newcommand{\replunderscores}[1]{\expandafter\@repl@underscores#1_\relax}
\def\@repl@underscores#1_#2\relax{%
	\ifx \relax #2\relax
	% #2 is empty => finish
	#1%
	\else
	% #2 is not empty => underscore was contained, needs to be replaced
	#1%
	\textunderscore
	% continue replacing
	% #2 ends with an extra underscore so I don't need to add another one
	\@repl@underscores#2\relax
	\fi
}
\newcommand{\filename}[1]{\emph{\replunderscores{#1}}}
\newtheorem{defi}{Definition}[section]
\theoremstyle{definition}
\newtheorem{theorem}[defi]{Theorem}
\newtheorem*{theorem*}{Theorem}
\newtheorem{lemma}[defi]{Lemma}
\newtheorem{conj}[defi]{Conjecture}
\newtheorem*{conj*}{Conjecture}
\newtheorem{coro}[defi]{Corollary}
\newtheorem{prop}[defi]{Proposition}
\newtheorem*{claim}{Claim}
\newtheorem{examples}[defi]{Examples}
\newtheorem*{notation}{Notation}
\newtheorem*{note1}{Note}
\newtheorem*{note2}{Notes}
\newtheoremstyle{conv}{}{}{\bfseries\centering}{}{\itshape\bfseries}{:}{ }{}
\theoremstyle{conv}
\renewcommand{\epsilon}{\varepsilon}
\renewcommand{\phi}{\varphi}
\renewcommand{\bar}{\overline}
\newcommand{\1}{\mathbbm{1}}
\newcommand{\R}{\mathbb{R}}
\newcommand{\Z}{\mathbb{Z}}
\newcommand{\N}{\mathbb{N}}
\newcommand{\B}{\mathbb{B}}
\renewcommand{\S}{\mathbb{S}}
\newcommand{\C}{\mathcal{C}}
\newcommand{\Ggraph}[2]{\mathcal{G}_{#1}\left(#2\right)}
\newcommand{\Ne}[1]{\mathcal{N}\left(#1\right)}
\newcommand{\conn}[1]{\text{conn}\left(#1\right)}
\NewDocumentCommand{\Bor}{O{\epsilon} O{d}}{\text{Bor}^{#2}\left(#1\right)}
\NewDocumentCommand{\distgraph}{O{\epsilon} O{\alpha} O{d}}%
	{\ifthenelse{\equal{#1}{0}}{\S^{#3}_{#2}}{	\S^{#3}_{#2}\left(#1\right)}	
	}
\NewDocumentCommand{\gendistgraph}{O{X} O{\alpha}}{\text{G}_{#2}\left(#1\right)}
\renewcommand{\P}[1]{\mathbb{P}\left[#1\right]}
\newcommand{\dist}[2][]{%
	\ifthenelse{ \equal{#1}{} }
	{\mathbbm{d}\left(#2\right)}
	{\mathbbm{d}_{#1}\!\left(#2\right)}
}
\newcommand{\diam}[1]{\text{diam}\left(#1\right)}
\newcommand{\interior}[1]{\text{int}\left(#1\right)}
\newcommand{\Del}[1]{\mathcal{D}\!\text{\textit{el}}\!\left(#1\right)}
\renewcommand{\cref}{\Cref}
\title{\textbf{\MakeUppercase{Topology and chromatic number of random $\epsilon$-distance graphs on spheres}}}
\author{\bf Francisco Martinez-Figueroa \\
The Ohio State University\\
e-mail address: martinezfigueroa.2@osu.edu}
\date{May 2022}
\begin{document}

\maketitle

\section{Introduction}

Given a metric space $(Z,\mathbbm{d})$ and a parameter $\alpha\geq0$, the corresponding \textit{distance graph} on $Z$ is obtained by drawing edges $x\sim y$ whenever $x,y\in Z$ and $\dist{x,y}=\alpha$. Distance graphs have been an active topic of research in the last few decades, popularized by their relation to Erd\H{o}s' \textit{$n$ distinct distances} problem \cite{Erdos1946} and \textit{unit distances} problem \cite[see also][Ch.~4]{Matousek2002}, among others. Of particular interest, has been the study of the chromatic number of unit distance graphs, that is, when $\alpha=1$, on Euclidean spaces. For instance, the famous \textit{Hardwiger--Nelson}'s problem asks to compute the chromatic number of the unit distance graph on the plane $\R^2$. Similar distance graphs have also been studied in higher dimensions \cite{KupavskiiRaigorodskii2009,Raigorodskii2018,KahleBirra2015,Coulson2002}, on spheres \cite{Simmons1976,Kupavskii2011,Raigorodskii2012,Prosanov-Raigorodskii-Sagdeev2017,Lovasz1983}, and even on hyperbolic spaces \cite{DeCorte2019,Parlier2016} (see Soifer's  \textit{Mathematical Coloring Book} \cite{Soifer2014} for a detailed historical revision).

$\epsilon$-Distance graphs are a natural generalization obtained when relaxing the condition on the edges: instead of making $x\sim y$ only when $\dist[]{x,y}=\alpha$, we may draw an edge $x\sim y$ whenever their distance is $\epsilon$-\textit{close} to $\alpha$. This kind of graphs and their chromatic number, have also attracted researchers in the past few years: Exoo \cite{Exoo2005}, Bock \cite{Bock2019} and, Currie--Eggleton \cite{Currie2015}, among others, have studied the chromatic number of unit $\epsilon$-distance graphs on the plane, for $\epsilon$ small. 

Provided $Z$ is also endowed with a probability measure, we may define \textit{random $\epsilon$-distance graphs}: given $n\in\N$, take $n$ i.i.d. random points on $Z$ and consider the induced subgraph on the $\epsilon$-distance graph. In the case when $Z=\S^d$, endowed with the uniform Borel probability measure, and geodesic distance, varying the values of $\alpha$ provides a generalization of the random Borsuk graphs as studied in \cite{Kahle-Martinez2020}\footnote{Notice in \cite{Kahle-Martinez2020} we used the Euclidean distance on the ambient space, however the geodesic distance is a better choice in this setting}.

In this paper, we are interested in the chromatic number of random $\epsilon$-distance graphs on spheres (with the uniform probability measure and geodesic distance) when $n\to\infty$ and $\epsilon=\epsilon(n)\to0$. In \cite{Kahle-Martinez2020}, we established that topological tools (namely Borsuk--Ulam's and Lyusternik--Schnirerlman's theorems) can effectively produce lower bounds for the chromatic number of random Borsuk graphs. Here, we explore the effectiveness of topological invariants for this general case. Specifically, we study the connectivity of the \textit{neighborhood complex}, introduced by L\'ovasz in his breakthrough proof of Kneser's conjecture \cite{Lovasz1978}, which, as an application of Borsuk--Ulam's Theorem, gives a general lower bound for the chromatic number of graphs.  

Our main result is that, in general, the connectivity of the neighborhood complex (\cref{thm:conn-neigh-Lovasz}) can at most produce a lower bound of $d+2$, while standard results for distance graphs on spheres give an exponential bound. Thus, in general, this topological invariant is far away from the chromatic number. However, for dimensions $d=1$ and 2, we show this invariant is a.a.s. $d+2$, and it is a tight bound for a range of values $\alpha$. We conjecture this to be also the case for dimensions $d\geq3$, provided $\alpha$ is close to $\pi$. We point out that we don't consider the case $\alpha=\pi/2$, so whether L\'ovasz's bound can be tight in that case is still an open question.

\section{Preliminaries}
\subsection{Definitions}

We formally write down the definitions from the introduction. 

\begin{defi}[Distance Graph]\label{def:distance_graphs}
	Given a metric space $(Z,\mathbbm{d})$, a subset $X\subset Z$, and a constant $\alpha$, the corresponding \textbf{distance graph} $\gendistgraph$, is the graph given by
	\begin{itemize}
		\item \textbf{Vertices:} $V(\gendistgraph)=X$, and
		\item \textbf{Edges:} $x\sim y$ in $\gendistgraph$ whenever $\dist{x,y}=\alpha$, and $x\neq y$. 
	\end{itemize}
\end{defi}

\begin{defi}[$\epsilon$-Distance Graph]\label{def:eps_distance_graph}
	Given a metric space $(Z,\mathbbm{d})$, a subset $X\subset Z$, a constant $\alpha\geq0$, and a parameter $0\leq\epsilon\ll\alpha$. The corresponding \textbf{$\bm{\epsilon}$-distance graph} $\gendistgraph[X,\epsilon]$, is the graph given by
	\begin{itemize}
		\item \textbf{Vertices:} $V(\gendistgraph[X,\epsilon])=X$, and
		\item \textbf{Edges:} $x\sim y$ in $\gendistgraph[X,\epsilon]$ whenever $\alpha-\epsilon\leq\dist{x,y}\leq\alpha+\epsilon$, and $x\neq y$. 
	\end{itemize}
\end{defi}

Note that $\gendistgraph=\gendistgraph[X,0]$.

\begin{defi}[Random $\epsilon$-Distance Graph]
	Given a measure metric space $(Z,\mathbbm{d},\mu)$ with probability measure $\mu$, a natural number $n$, and constants $\alpha, \epsilon\geq 0$. The corresponding \textbf{random $\bm{\epsilon}$-distance graph} $\gendistgraph[n,\epsilon]$ is the induced subgraph of $\gendistgraph[Z,\epsilon]$ with vertices $X=\{X_1,\dots, X_n\}\subset Z$ where the points $X_i$ are drawn i.i.d. from $Z$ according to $\mu$. 
\end{defi}

We focus on random $\epsilon$-distance graphs on the unit sphere $\S^d$, with $\mathbbm{d}$ the geodesic distance and $\mu$ the uniform probability measure. Thus, we introduce the following notation.

\begin{notation}
    Let $\distgraph:=\gendistgraph[\S^d,\epsilon]$ and $\distgraph[n,\epsilon]$, be the random $\epsilon$-distance graph on $\S^d$. For a set $X\subset\S^d$, we write $\distgraph[X,\epsilon]$ for the induced subgraph of $\distgraph[\epsilon]$ with vertex set $X$. For distance graphs with $\epsilon=0$, we simply write $\distgraph[0]=\distgraph[0](0)$
\end{notation}

\begin{note1} 
Other authors use the notation $\S^d_r$ for the sphere of radius $r$, and look at unit distance graphs on it. To be consistent with them, we would need to use the notation $\distgraph[\epsilon][1/\alpha]$ and Euclidean distance instead of geodesic, but we avoid it for being too cumbersome.
\end{note1}

We point out that the graphs $\distgraph[n,\epsilon]$ are indeed a generalization of random Borsuk graphs \cite{Kahle-Martinez2020}, setting $\alpha=\pi$: $$\Bor[n,\epsilon]=\distgraph[n,\epsilon][\pi].$$

On the opposite side, setting $\alpha=0$, produces a \textit{random geometric graph} on the sphere.  While we don't study this case, it provides a rich family of random graphs where the geometry of the underlying space can be used to derive graph theoretic properties. We refer the reader to Penrose's book \cite{Penrose2003} for an in-depth exposition. 

L\'ovasz introduced the following simplicial complex in his proof to Kneser's conjecture \cite{Lovasz1978}.

\begin{defi}Given a graph $G$, its \textbf{neighborhood complex} is the simplicial complex $\Ne{G}$ with vertices the vertices of $G$ and simplices all $A\subset V(G)$ such that $\mathop{\cap}_{v\in A}N(v)\neq\emptyset$. Where $N(v)$ is the set of neighbors of $v$ in the graph $G$.  
\end{defi} 

The great breakthrough of Lov\'asz's paper is a relationship between the connectivity of $\Ne{G}$ and $\chi(G)$ (the chromatic number of $G$). It is worth noting that the topological tool behind this relationship is Borsuk--Ulam's Theorem. 

\begin{defi}[Connectivity]
	Let $X$ be a non-empty topological space. For an integer $k\geq 0$, we say $X$ is \textbf{$\bm{k}$-connected} if for every $\ell=0, 1,\dots, k$, any continuous map $f:\S^\ell\to X$ can be extended to a continuous map $\B^{\ell+1}\to X$. We say $X$ is \textbf{connected} if it is $0$-connected. We say $X$ is \textbf{simply connected} if it is 1-connected.
\end{defi}

We denote by $\conn{X}$ the largest $k$ such that $X$ is $k$-connected. It is a classical result that $\conn{\S^d}=d-1$ for all $d\geq 0$.

\begin{theorem}[\cite{Lovasz1978}]\label{thm:conn-neigh-Lovasz}
	Let $G$ be a graph. Then $$\conn{\Ne{G}}+3\leq\chi(G).$$
\end{theorem}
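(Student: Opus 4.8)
The plan is to prove Lov\'asz's theorem via Borsuk--Ulam, following the standard route through the \emph{neighborhood graph functor} and the box complex, but I will sketch the most direct argument using the neighborhood complex itself. Suppose $G$ admits a proper coloring with $m = \chi(G)$ colors; I want to show $\conn{\Ne{G}} \le m - 3$, equivalently that $\Ne{G}$ is not $(m-2)$-connected. The key object is a $\Z_2$-space built from $G$ whose $\Z_2$-index (the smallest $n$ for which it maps $\Z_2$-equivariantly to $\S^n$) is squeezed between a topological quantity bounded below by $\conn{\Ne{G}}+1$ and a combinatorial quantity bounded above by $\chi(G)-2$.

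\medskip

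\noindent\textbf{Step 1: From the coloring to an equivariant map to a sphere.} Given the proper $m$-coloring $c\colon V(G)\to[m]$, I would construct a simplicial $\Z_2$-map from a suitable $\Z_2$-model of $G$ (the box complex $\mathrm{B}(G)$, or equivalently $\Ne{G}$ with a free $\Z_2$-action coming from the $\pm$-labeling of its join-type double) into the boundary of the $m$-dimensional cross-polytope $\partial\Diamond^m \cong \S^{m-1}$. Concretely: each simplex of the doubled complex corresponds to a pair $(A_+,A_-)$ of disjoint subsets of $V(G)$ with all edges present between them; send it using the color classes to the face of $\partial\Diamond^m$ spanned by $\{+e_{c(a)} : a\in A_+\}\cup\{-e_{c(b)}: b\in A_-\}$. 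Properness of the coloring guarantees $c(a)\neq c(b)$ when $a\sim b$, so this lands in a genuine (cross-polytope) face and is simplicial and $\Z_2$-equivariant. Hence $\indz{\text{box complex}} \le m-2$.

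\medskip

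\noindent\textbf{Step 2: From connectivity of $\Ne{G}$ to a lower bound on the index.} Here I would invoke the standard homotopy-theoretic fact (Lov\'asz, and Bj\"orner--Lov\'asz) that the box complex of $G$ is $\Z_2$-homotopy equivalent, or at least connected closely enough, to a double/suspension-type construction on $\Ne{G}$, so that $\conn{\text{box complex}} \ge \conn{\Ne{G}} + 1$. Then the \emph{co-index}/index inequality from Borsuk--Ulam theory says: if a free $\Z_2$-space $Y$ is $k$-connected, it does not admit a $\Z_2$-map to $\S^{k}$, i.e. $\indz{Y}\ge k+1$. Applying this with $Y$ the box complex and $k = \conn{\Ne{G}}+1$ gives $\indz{\text{box complex}} \ge \conn{\Ne{G}}+2$.

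\medskip

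\noindent\textbf{Step 3: Combine.} Chaining Steps 1 and 2, $\conn{\Ne{G}} + 2 \le \indz{\text{box complex}} \le \chi(G) - 2$, which rearranges to $\conn{\Ne{G}} + 4 \le \chi(G)$ --- wait, I must be careful with the exact shift; the correct bookkeeping (the box complex of $K_n$ is $\Z_2$-homotopy equivalent to $\S^{n-2}$, giving equality $\chi(K_n)=n$) yields precisely $\conn{\Ne{G}}+3 \le \chi(G)$. The main obstacle --- and the part that requires genuine care rather than formal manipulation --- is Step 2: establishing the precise relationship between $\conn{\Ne{G}}$ and the connectivity/index of the free $\Z_2$-space, since $\Ne{G}$ itself carries no natural free involution, so one must pass to an auxiliary complex (the box complex, or Lov\'asz's original neighborhood-complex-of-the-bipartite-double construction) and verify that the simplicial map in Step 1 is well-defined there and that the connectivity does not drop. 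Everything else is an application of Borsuk--Ulam in its index form, which is exactly the topological input the paper advertises.
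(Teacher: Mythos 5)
The paper states this theorem with a citation to Lov\'asz (1978) and gives no proof of its own, so there is nothing internal to compare your argument against; I will assess the sketch on its own terms.

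Your overall route --- produce a $\Z_2$-equivariant simplicial map from a box-complex model of $G$ into the boundary of a cross-polytope to bound $\indz{\cdot}$ from above, and use the connectivity of $\Ne{G}$ (plus the Borsuk--Ulam index inequality $\indz{Y}\ge\conn{Y}+1$ for free $\Z_2$-spaces $Y$) to bound it from below --- is the correct and standard modern proof. The genuine gap is exactly the bookkeeping you flag and then wave away, and it is not a cosmetic off-by-one: you have mixed inequalities that belong to two different box complexes. For the ``small'' complex $B(G)$ (both sides nonempty, all edges present) a proper $m$-coloring gives a $\Z_2$-map to $\S^{m-2}$, so $\indz{B(G)}\le m-2$; and $B(G)$ is non-equivariantly homotopy equivalent to $\Ne{G}$, so $\conn{B(G)}=\conn{\Ne{G}}$ and $\indz{B(G)}\ge\conn{\Ne{G}}+1$. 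For the ``big'' complex $B_0(G)$ the coloring map lands in $\S^{m-1}$, so $\indz{B_0(G)}\le m-1$; and $B_0(G)$ is $\Z_2$-homotopy equivalent to the suspension of $B(G)$, whence $\conn{B_0(G)}\ge\conn{\Ne{G}}+1$ and $\indz{B_0(G)}\ge\conn{\Ne{G}}+2$. Either chain, carried consistently, gives $\chi(G)\ge\conn{\Ne{G}}+3$. But your Step~1 asserts the $B(G)$-style upper bound $\le m-2$ while your Step~2 asserts the $B_0(G)$-style connectivity jump $\conn{\Ne{G}}+1$, and chaining those yields $\conn{\Ne{G}}+4\le\chi(G)$, which is false already for $K_n$ (there $\Ne{K_n}$ is homotopy equivalent to $\S^{n-2}$, so the left side is $n+1$ while $\chi(K_n)=n$). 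To turn this into a proof you must fix one of the two complexes, derive both the coloring bound and the connectivity bound for that same complex, and actually establish the two inputs you invoke as folklore: the homotopy equivalence $B(G)\simeq\Ne{G}$ and the $\Z_2$-homotopy equivalence $B_0(G)\simeq\mathrm{susp}\,B(G)$ --- neither is formal, and both are where the real content of the shift by $+1$ lives.
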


We are interested in the effectiveness of this topological bound for the chromatic number $\chi(\distgraph[n,\epsilon])$ when $n\to\infty$ and $\epsilon=\epsilon(n)\to 0$. 

Since we examine the efficiency of topological methods, we will only consider the case when the vertices of the graph form an $(\epsilon/4)$-net of $\S^d$. Adapting \cite[Lemma 3.3]{Kahle-Martinez2020} to geodesic distance, we get a threshold for $\epsilon$ to get nets with high probability (\cref{lemma:eps_net_regime}). Here, and throughout this paper, we say that an event $A$ happens \textit{asymptotically almost surely (a.a.s.)} if $\P{A}\to1$ as $n\to\infty$.  

\begin{lemma}\label{lemma:eps_net_regime}
	Given a dimension $d\geq0$, there exists a constant $\C_d$ such that if $\epsilon\geq \C_d\left(\dfrac{\log n}{n}\right)^{1/d}$ and $\epsilon\to 0$, then a.a.s. the vertices of $\distgraph[n,\epsilon]$ form an $(\epsilon/4)$-net of $\S^d$. 
\end{lemma}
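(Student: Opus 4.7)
The plan is to adapt the covering-and-union-bound argument of \cite[Lemma 3.3]{Kahle-Martinez2020} from Euclidean to geodesic distance on $\S^d$. Concretely, I would first fix a minimal geodesic $(\epsilon/8)$-net $Y=\{y_1,\ldots,y_M\}\subset\S^d$. A standard packing argument combined with the asymptotic $\mu(B(y,r))\sim c_d\, r^d$ as $r\to 0$ (derivable from the closed form $\vol{B(y,r)}=\omega_{d-1}\int_0^r\sin^{d-1}(s)\,ds$ for geodesic balls on $\S^d$) gives dimension-dependent constants $C_1,C_2>0$ and $\epsilon_0>0$ such that, for $\epsilon<\epsilon_0$ (available since $\epsilon\to 0$),
$$M\leq C_1\epsilon^{-d}\quad\text{and}\quad \mu(B(y_i,\epsilon/8))\geq C_2\epsilon^d\text{ for every }i,$$
where the uniformity in $i$ uses the homogeneity of $\S^d$.

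Next, I would record the geometric reduction: if every ball $B(y_i,\epsilon/8)$ contains at least one sample $X_j$, then for any $z\in\S^d$ there exists $y_i\in Y$ with $\dist{z,y_i}\leq\epsilon/8$ (by the net property), and some $X_j\in B(y_i,\epsilon/8)$, so the triangle inequality gives $\dist{z,X_j}\leq\epsilon/4$. Hence whenever every $B(y_i,\epsilon/8)$ is hit, the samples form an $(\epsilon/4)$-net of $\S^d$.

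Then I would bound the failure probability. Since the $X_j$ are i.i.d.\ uniform,
$$\P{B(y_i,\epsilon/8)\cap\{X_1,\ldots,X_n\}=\emptyset}=\bigl(1-\mu(B(y_i,\epsilon/8))\bigr)^n\leq e^{-C_2 n\epsilon^d},$$
and a union bound over the $M$ net points yields
$$\P{\exists\, i :\, B(y_i,\epsilon/8)\cap\{X_j\}_{j=1}^n=\emptyset}\leq C_1\epsilon^{-d}\,e^{-C_2 n\epsilon^d}.$$

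Finally, substituting the hypothesis $\epsilon\geq\C_d(\log n/n)^{1/d}$ gives $n\epsilon^d\geq\C_d^d\log n$ and $\epsilon^{-d}\leq\C_d^{-d}\,n/\log n$, so
$$C_1\epsilon^{-d}\,e^{-C_2 n\epsilon^d}\;\leq\;\frac{C_1}{\C_d^d}\cdot\frac{n}{\log n}\cdot n^{-C_2\C_d^d}\;\xrightarrow[n\to\infty]{}\;0$$
provided $\C_d$ is chosen large enough that $C_2\C_d^d>1$ (for instance $\C_d=(2/C_2)^{1/d}$), which completes the argument. The only mildly delicate point is establishing the uniform (in basepoint and in $\epsilon$) small-ball volume estimate, but this is immediate from the homogeneity of $\S^d$ and the explicit formula above; no genuine obstacle remains.
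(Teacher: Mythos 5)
Your proposal is correct and is precisely the standard covering/union-bound argument that the paper invokes by citing and adapting \cite[Lemma 3.3]{Kahle-Martinez2020} to geodesic distance; the paper gives no separate proof of its own. All the steps check out, including the choice $\C_d=(2/C_2)^{1/d}$ making the failure probability $O\bigl(n^{1-C_2\C_d^d}/\log n\bigr)\to 0$.
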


\subsection{Some Easy Bounds}

We start with some easy bounds for the chromatic number of random $\epsilon$-distance graphs on spheres, and state some evident relations with the well studied cases of distance graphs on spheres and $\R^d$. 

We will need the following result. 
\begin{prop}\label{lemma:net_connected}
	Let $X\subset\S^d$ be a finite $(\epsilon/2)$-net. Let $H$ be the graph with vertex set $X$ and edges $x\sim y$ in $H$ whenever $\dist{x,y}\leq\epsilon$. Then $H$ is connected. 
\end{prop}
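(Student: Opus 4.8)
The plan is to exploit the fact that $\S^d$ is path-connected, together with the net property, to connect any two vertices of $H$ by a path. Fix two vertices $x, y \in X$. Since $\S^d$ is path-connected, choose a continuous path $\gamma : [0,1] \to \S^d$ with $\gamma(0) = x$ and $\gamma(1) = y$; by compactness we may assume $\gamma$ is (piecewise) geodesic, but in fact all we need is uniform continuity. The idea is to sample points along $\gamma$ finely enough, then replace each sampled point by a nearby net point, and check that consecutive net points are within geodesic distance $\epsilon$, hence adjacent in $H$.

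The key steps, in order, are as follows. First, by uniform continuity of $\gamma$, pick $0 = t_0 < t_1 < \dots < t_m = 1$ such that $\dist{\gamma(t_{i-1}), \gamma(t_i)} \leq \epsilon/4$ for each $i$; set $p_i = \gamma(t_i)$, so $p_0 = x$ and $p_m = y$. Second, since $X$ is an $(\epsilon/2)$-net, for each $i$ choose a net point $q_i \in X$ with $\dist{p_i, q_i} \leq \epsilon/2$; we may take $q_0 = x$ and $q_m = y$ since $x, y$ are already in $X$, though this is not essential. Wait — with an $(\epsilon/2)$-net the triangle inequality gives $\dist{q_{i-1}, q_i} \leq \dist{q_{i-1}, p_{i-1}} + \dist{p_{i-1}, p_i} + \dist{p_i, q_i} \leq \epsilon/2 + \epsilon/4 + \epsilon/2 > \epsilon$, which is too weak. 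So instead I would sample the path so that $\dist{p_{i-1}, p_i}$ is tiny, say $\leq \epsilon/100$, and additionally use that the net points can be chosen within $\epsilon/2$; this still gives a bound exceeding $\epsilon$. The fix is to not route through the net points naively but to observe: between $p_{i-1}$ and $p_i$, which are extremely close, any net point $q$ near $p_{i-1}$ satisfies $\dist{q, p_i} \leq \dist{q,p_{i-1}} + \dist{p_{i-1},p_i}$; we want to chain through net points each at distance $\leq \epsilon$ from the next. The clean way: since $X$ is an $(\epsilon/2)$-net, a greedy walk works — but the cleanest is to sample so finely that $\dist{p_{i-1},p_i} \leq \epsilon/2$ is replaced by choosing the sampling mesh small enough that \emph{every} point of $\gamma$ is within $\epsilon/2$ of $\{q_i\}$, and two consecutive $q_i$ (net points for consecutive, very close sample points) satisfy $\dist{q_{i-1},q_i}\le \dist{q_{i-1},p_{i-1}}+\dist{p_{i-1},p_i}+\dist{p_i,q_i}$ — to make this $\le \epsilon$ we need $\dist{p_{i-1},p_i}\le 0$, impossible.

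The correct approach is therefore to use the stronger $(\epsilon/4)$-net hypothesis implicit elsewhere, \emph{or} to argue more carefully: pick the mesh so that $\dist{p_{i-1}, p_i} \le \epsilon/4$, but then note we need net points within $\epsilon/4$, i.e. we should reduce to the case where the net constant is small relative to $\epsilon$. Actually, re-examining: with an $(\epsilon/2)$-net one cannot in general do better, but one \emph{can} subdivide more cleverly. Given consecutive sample points $p_{i-1}, p_i$ with $\dist{p_{i-1},p_i}\le \delta$ for $\delta$ to be chosen, interpolate further net points along the geodesic $[p_{i-1},p_i]$: any point $z$ on this short geodesic has a net point within $\epsilon/2$, and consecutive such net points are within $\epsilon/2 + (\text{spacing on the geodesic}) + \epsilon/2$. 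This again fails for the same reason. I conclude the honest proof must invoke that the relevant nets here are $(\epsilon/4)$-nets (as stated in the surrounding text): then $\dist{q_{i-1},q_i} \le \epsilon/4 + \epsilon/4 + \epsilon/4 < \epsilon$ after choosing the mesh $\le \epsilon/4$, so $q_{i-1} \sim q_i$ in $H$, giving a path $x = q_0, q_1, \dots, q_m = y$. I would therefore state the proof for an $(\epsilon/2)$-net by instead routing the path so that consecutive sampled points lie within $\epsilon/2$ of a \emph{common} earlier-chosen net point and using that two net points each within $\epsilon/2$ of points $\epsilon/2$ apart along the path — hmm.

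Let me restart the logic cleanly: the real argument. Fix $x,y\in X$ and a path $\gamma$. By uniform continuity pick a mesh so that $\operatorname{diam}(\gamma([t_{i-1},t_i])) \le \epsilon/2$. For each $i$, since $X$ is an $(\epsilon/2)$-net pick $q_i \in X$ with $\dist{\gamma(t_i),q_i}\le \epsilon/2$, and set $q_0=x$, $q_m=y$. Then $\dist{q_{i-1},q_i} \le \dist{q_{i-1},\gamma(t_{i-1})} + \dist{\gamma(t_{i-1}),\gamma(t_i)} + \dist{\gamma(t_i),q_i} \le \epsilon/2 + \epsilon/2 + \epsilon/2$. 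To repair, refine: between $t_{i-1}$ and $t_i$ insert one more sample point $s$ with $\gamma(s)$ the geodesic midpoint region of $\gamma(t_{i-1}),\gamma(t_i)$ — still the $\epsilon/2+\epsilon/2+\epsilon/2$ bound persists structurally. The genuine fix: choose the mesh so that $\dist{\gamma(t_{i-1}),\gamma(t_i)} = 0$ is impossible, so one genuinely needs the net constant $c$ with $2c + (\text{mesh}) \le \epsilon$; with an $(\epsilon/2)$-net that forces mesh $\le 0$. \textbf{Hence the statement as written needs the net to be, say, an $(\epsilon/4)$-net}, consistent with the paper's running assumption; with that, choosing mesh $\le \epsilon/2$ gives $\dist{q_{i-1},q_i}\le \epsilon/4+\epsilon/2+\epsilon/4 = \epsilon$, so $q_{i-1}\sim q_i$ (the edge condition is $\le \epsilon$, non-strict), and $q_0=x \leadsto q_1 \leadsto \cdots \leadsto q_m = y$ is a path in $H$. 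Since $x,y$ were arbitrary, $H$ is connected.

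\medskip

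The main obstacle is exactly the one surfaced above: the interaction between the net constant and the sampling mesh in the triangle inequality. The edge threshold $\epsilon$ must absorb \emph{two} net-approximation errors plus \emph{one} path-sampling error, so one needs the net constant $\le \epsilon/3$ (an $(\epsilon/4)$-net suffices comfortably, an $(\epsilon/2)$-net does not without extra cleverness). The resolution is to either (a) take the net constant small enough as the paper already does throughout, or (b) replace the endpoint choices $q_0 = \gamma(t_0)$-approximant and $q_m$-approximant by $x,y$ themselves, which removes error only at the two ends and does not help the interior steps — so (a) is the way. Once the constant bookkeeping is arranged, the remainder (uniform continuity of $\gamma$, finitely many sample points, the triangle inequality, transitivity of the "connected by a path in $H$" relation) is routine. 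I would also remark that finiteness of $X$ is used only to make $H$ a genuine finite graph and plays no role in connectivity.
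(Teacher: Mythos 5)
Your proposal does not close, and your diagnosis points in the wrong direction. You correctly identify the arithmetic obstruction in the path-following approach -- routing through net points accumulates two net-errors plus one mesh-error, and with an $(\epsilon/2)$-net this sum exceeds $\epsilon$ no matter how fine the sampling mesh -- but you then conclude that the statement needs an $(\epsilon/4)$-net hypothesis. That conclusion is wrong: the $(\epsilon/2)$-net hypothesis is sufficient. What fails is your proof strategy, not the proposition, and the factor $\epsilon/2$ is not slack that must also pay for a mesh term.

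The paper proves the statement by an extremal argument that has no mesh term at all. Suppose $H$ is disconnected; fix a component $C_0$ and choose $x\in C_0$, $y\in X\setminus C_0$ minimizing $\dist{x,y}$ (finiteness of $X$ is used precisely here, to guarantee the minimum is attained). Since $x\not\sim y$, $\dist{x,y}>\epsilon$; write $\dist{x,y}=\epsilon+4\delta$ with $\delta>0$. On the geodesic arc from $x$ to $y$ take $a,b$ at distance $\epsilon/2+\delta$ from $x$ and from $y$ respectively, so $\dist{a,b}=2\delta$. The $(\epsilon/2)$-net gives $x',y'\in X$ with $\dist{x',a}\le\epsilon/2$ and $\dist{y',b}\le\epsilon/2$. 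Then $\dist{x,x'}\le\epsilon+\delta<\dist{x,y}$, which forces $x'\in C_0$: either $\dist{x,x'}\le\epsilon$ and $x'\sim x$, or $\dist{x,x'}>\epsilon$ and $x'\notin C_0$ would contradict minimality; similarly $y'\in X\setminus C_0$. But $\dist{x',y'}\le\epsilon/2+2\delta+\epsilon/2=\epsilon+2\delta<\dist{x,y}$, contradicting minimality. The key structural difference from your attempt is that the two net-errors flank a short \emph{interior} segment of the arc whose length is controlled by the slack $\delta$, not by an independently chosen mesh; there is never a need to make three terms of size $\epsilon/2$ coexist. Your remark that finiteness of $X$ plays no role in connectivity is also incorrect in light of this proof.
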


\begin{proof}
	Fix a vertex $x_0\in X$ and let $C_0$ be the connected component of $H$ containing $x_0$. Suppose by way of contradiction that $H$ is not connected, so $X\setminus C_0\neq\emptyset$. Choose $x\in C_0$ and $y\in X\setminus C_0$ such that they minimize $\dist{x,y}$. Since they are in different connected components, $\dist{x,y}>\epsilon$. Write $\dist{x,y}=\epsilon+4\delta$. Let $[x,y]$ denote the circular arc between $x$ and $y$, and let $a,b\in[x,y]$ be the points such that $\dist{x,a}=\dist{y,b}=\epsilon/2+\delta$, so $\dist{y,a}=\dist{x,b}=\epsilon/2+3\delta$. Since $X$ is an $(\epsilon/2)$-net, there are points $x',y'\in X$ such that $\dist{x',a}\leq\epsilon/2$ and $\dist{y',b}\leq\epsilon/2$. Since 
	\begin{align*}
		\dist{x,x'}& \leq\dist{x,a}+\dist{a,x'}\leq\epsilon+\delta<\dist{x,y}\text{ and}\\
		\dist{y,y'}&\leq\dist{y,b}+\dist{b,y'}\leq\epsilon+\delta<\dist{x,y},
	\end{align*} 
	it must be that $x'\in C_0$ and $y'\in X\setminus C_0$. But then
	$$\dist{x',y'}\leq\dist{x',a}+\dist{a,b}+\dist{b,y'}\leq\epsilon/2+2\delta+\epsilon/2=\epsilon+2\delta<\dist{x,y},$$
	which is contradiction. Therefore $C_0=X$ as desired. 
\end{proof}

\begin{lemma}\label{lemma:clique_lower_bound}
	Suppose there is a $k$-clique in $\distgraph[0]$. If   $\epsilon\geq \C_d\left(\log n/n\right)^{1/d}$ and $\epsilon\to 0$, then a.a.s. $k+1\leq\chi\left(\distgraph[n,\epsilon]\right)$.
\end{lemma}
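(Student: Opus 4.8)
The plan is to show that a $k$-clique in the "ideal" distance graph $\distgraph[0]$ (with $\epsilon=0$) forces, with high probability, a $(k+1)$-clique to appear in the random $\epsilon$-distance graph $\distgraph[n,\epsilon]$, after which \cref{thm:conn-neigh-Lovasz} — or more directly the trivial bound $\chi(G)\geq\omega(G)$ — finishes the job. The key geometric observation is that if $p_0,\dots,p_{k-1}\in\S^d$ realize a $k$-clique in $\distgraph[0]$ (so all pairwise geodesic distances equal $\alpha$), then by a compactness/openness argument there is a radius $\rho>0$ such that \emph{any} choice of points $q_i$ with $\dist{q_i,p_i}\leq\rho$ still satisfies $\alpha-\epsilon\leq\dist{q_i,q_j}\leq\alpha+\epsilon$ for all $i\neq j$, provided $\epsilon$ is not too small relative to $\rho$; since $\epsilon\to0$ we instead fix $\rho = \rho(\epsilon)$ comparable to $\epsilon$ (say $\rho = c_d\,\epsilon$ for a small dimensional constant $c_d$), using that the geodesic distance is $1$-Lipschitz in each argument so that $|\dist{q_i,q_j}-\dist{p_i,p_j}|\leq \dist{q_i,p_i}+\dist{q_j,p_j}\leq 2\rho\leq\epsilon$. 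Thus each of the $k$ balls $B(p_i,\rho)$, if it contains at least one sample point, contributes a vertex, and any transversal of these balls is a $k$-clique in $\distgraph[n,\epsilon]$.

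Next I would promote this $k$-clique to a $(k+1)$-clique. Here I use \cref{lemma:eps_net_regime}: under the hypothesis $\epsilon\geq\C_d(\log n/n)^{1/d}$ with $\epsilon\to0$, a.a.s.\ the vertex set is an $(\epsilon/4)$-net of $\S^d$, so in particular every ball of radius $\epsilon/4$ contains a sample point. Since $\rho = c_d\epsilon$ can be taken $\geq \epsilon/4$ by choosing $c_d\geq 1/4$ (adjusting the earlier Lipschitz computation to $2\rho\leq\epsilon$ forces $c_d\leq 1/2$, which is compatible), each ball $B(p_i,\rho)$ is hit, giving a $k$-clique $q_0,\dots,q_{k-1}$ with all pairwise distances in $[\alpha-\epsilon,\alpha+\epsilon]$. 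To get the $(k+1)$-st vertex, note that the set of points at geodesic distance exactly $\alpha$ from all of $p_0,\dots,p_{k-1}$ is nonempty when $k\leq d+1$ in general, but more robustly: the region $R=\{z\in\S^d : \alpha-\epsilon/2\leq\dist{z,q_i}\leq\alpha+\epsilon/2 \ \forall i\}$ has inradius bounded below by a constant multiple of $\epsilon$ — because slightly shrinking the tolerance from $\epsilon$ to $\epsilon/2$ around the exact configuration leaves an open neighborhood of positive $\epsilon$-scale — so again by the net property $R$ contains a sample point $q_k$, and $q_0,\dots,q_k$ is a $(k+1)$-clique in $\distgraph[n,\epsilon]$. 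Therefore a.a.s.\ $\chi(\distgraph[n,\epsilon])\geq k+1$.

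The main obstacle is the second step: one must verify that the "slack region" $R$ genuinely has diameter/inradius on the order of $\epsilon$ and not merely positive measure, since the net only guarantees coverage at scale $\epsilon/4$. This requires a quantitative version of the implicit-function / transversality statement that the locus $\{\dist{z,p_i}=\alpha \ \forall i\}$ is a submanifold near which the defining functions have gradients bounded away from zero and from mutual degeneracy, uniformly over the (compact) space of $k$-clique configurations in $\distgraph[0]$ for the fixed values of $\alpha$ and $d$; the dimensional constant $\C_d$ may need to be enlarged to absorb this, which is harmless. An alternative, cleaner route avoids this: once we have the $k$-clique $q_0,\dots,q_{k-1}$, observe that it is itself an \emph{approximate} regular simplex, and by a direct perturbation of an explicit witness point (e.g.\ a point equidistant from the exact $p_i$, which exists for the original clique) one checks the perturbed witness still lies within tolerance $\epsilon$ and within $\epsilon/4$ of some sample point. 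Either way, combining the a.a.s.\ clique with $\chi\geq\omega$ yields the claim.
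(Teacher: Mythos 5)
Your argument splits into two steps: (i) transport a $k$-clique of $\distgraph[0]$ to a $k$-clique of $\distgraph[n,\epsilon]$ using the $(\epsilon/4)$-net property, and (ii) "promote" it to a $(k{+}1)$-clique by finding a sample point in the region $R=\{z:\alpha-\epsilon/2\leq\dist{z,q_i}\leq\alpha+\epsilon/2\ \forall i\}$. Step (i) is fine and is essentially implicit in the paper's argument. Step (ii), however, is where the proof breaks: the region $R$ can be \emph{empty}, and then no amount of transversality or quantitative inradius bound can save the argument. A $k$-clique in $\distgraph[0]$ simply does not imply $\omega(\distgraph[n,\epsilon])\geq k+1$; only the weaker $\omega\geq k$ follows, which gives $\chi\geq k$, one short of the claim.

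Concretely, take $d=1$ and $\alpha=2\pi/3$. The inscribed equilateral triangle is a $3$-clique of $\distgraph[0][2\pi/3][1]$, but any four points on $\S^1$ have some pair separated by arclength at most $\pi/2<2\pi/3-\epsilon$, so $\omega(\distgraph[\epsilon][2\pi/3][1])=3$ for small $\epsilon$; nonetheless the lemma (and \cref{thm:1_dim_eps_dist_graphs}) assert $\chi=4$. Even more simply, for $d=1$ and $\alpha\neq 2\pi/3$, the pair $\{0,\alpha\}\subset\S^1$ is a $2$-clique of $\distgraph[0][\alpha][1]$, yet there is no third point of $\S^1$ within $\epsilon$ of distance $\alpha$ from both, so $\omega=2$ while the lemma gives $\chi\geq 3$. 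Your own hedge, "nonempty when $k\leq d+1$ in general," is not correct either (e.g.\ $d=2$, $k=3$, $\alpha=2\pi/3$: the only points of $\S^2$ equidistant from a spherical equilateral triangle lying on a great circle are the two poles of that circle, both at distance $\pi/2\neq\alpha$), and the "compact transversality" remark only tells you the equidistance locus is a manifold \emph{where it is nonempty}.

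The paper's proof avoids this trap entirely by not trying to enlarge the clique. It argues by contradiction: assume $\distgraph[X,\epsilon]$ has a proper $k$-coloring $c$. By rotational symmetry of $\S^d$, every point $x\in\S^d$ is the first vertex of some $k$-clique of $\distgraph[0]$; approximating the other vertices by sample points produces a $k$-clique of $\distgraph[X,\epsilon]$ through $x$, which must receive all $k$ colors. If $y\in X$ is within $\epsilon/2$ of $x$, then $y$ is adjacent to the same $k-1$ other clique vertices, forcing $c(y)=c(x)$. Thus $c$ is constant on each connected component of the auxiliary graph $H$ on $X$ with edges at distance $\leq\epsilon/2$; by \cref{lemma:net_connected}, $H$ is connected, so $c$ is globally constant, contradicting that $\distgraph[X,\epsilon]$ has an edge. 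This rigidity-of-coloring argument yields $\chi\geq k+1$ even when $\omega=k$, which is exactly the extra unit your clique-based approach cannot deliver.
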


\begin{proof}
	By \cref{lemma:eps_net_regime} suppose $X=V(\distgraph[n,\epsilon])$ is an $(\epsilon/4)$-net of $\S^d$, and let $\{v_1,\dots, v_k\}$ be a $k$-clique of $\distgraph[0]$. For any $x\in \S^d$, because of its rotational symmetry, we can always find a rotation $\Phi$ such that $\Phi(v_1)=x$, and thus $\{x, \Phi(v_2),\dots, \Phi(v_k)\}$ is also a $k$-clique of $\distgraph[0]$. In other words, we can find a $k$-clique containing any fixed vertex of $\S^d$. 
	
	By way of contradiction, suppose $G=\distgraph[X,\epsilon]$ is $k$-colorable, and fix such a coloring. Let $x,y\in X$ such that $\dist{x,y}\leq\epsilon/2$, we will prove $c(y)=c(x)$. Let $\{x'_1=x,x'_2,x'_3,\dots, x'_k\}$ be a clique of $\distgraph[0]$, so $\dist{x_i',x_j'}=\alpha$ whenever $i\neq j$. For each $2\leq i\leq k$ there exists a point $x_i\in X$ with $d(x_i,x'_i)\leq\epsilon/4$ and let $x_1=x'_1=x$. Thus 
	\begin{align*}
		\dist{x_i,x_j}&\leq \dist{x_i,x_i'}+\dist{x_i',x_j'}+\dist{x_j',x_j}\leq \epsilon/2+\alpha+\epsilon/2=\alpha+\epsilon\\
		\dist{x_i,x_j}&\geq \dist{x_i',x_j'}-\dist{x_i,x_i'}-\dist{x_j,x_j'}\geq \alpha-\epsilon/2-\epsilon/2=\alpha-\epsilon
	\end{align*}
	Thus $x_i\sim x_j$ in $G$ for all $i\neq j$, and so $\{x_1,\dots, x_k\}$ is a $k$-clique of $G$. That means the coloring $c$ must assign a different color to each $x_i$, say $c(x_i)=i$. Now, for $y$ and each $x_i$ we get 
	\begin{align*}
		\dist{y,x_i}&\leq\dist{y,x}+\dist{x,x_i'}+\dist{x_i',x_i}\leq \epsilon/2 +\alpha+\epsilon/4\leq\alpha+\epsilon\text{, and}\\
		\dist{y,x_i}&\geq\dist{x,x_i'}-\dist{x,y}-\dist{x_i,x_i'}\geq \alpha-\epsilon/2-\epsilon/4\geq\alpha-\epsilon
	\end{align*}
	So $y\sim x_i$ for all $i\geq2$, and thus $\{y,x_2, \dots, x_k\}$ is also a $k$-clique in $G$, in particular, this forces $c(y)=1=c(x)$, as desired. 
	
	To finish the proof, consider the graph $H$ with vertex set $X$ and edges $x\sim y$ in $H$, whenever $\dist{x,y}\leq\epsilon/2$. By our work above, if $x$ and $y$ are on the same connected component of $H$, then $c(x)=c(y)$. However, applying \cref{lemma:net_connected}, $H$ is a connected graph, so $c$ must be constant. But since $x\sim x_2$ in $G$, $c(x)\neq c(x_2)$ which is a contradiction. Therefore $k+1\leq\chi(G)$. 
\end{proof}

%As we did in the proof of \cref{lemma:color_d+2}, let $Phi:\Delta_{d+1}\to\S^d$ be the projection of the $(d+1)$-regular inscribed simplex onto $\S^d$, and denote by $\ell_d=\diam(\tau)$, for any facet $\tau\in\Delta_{d+1}$. Then, using the previous lemma, we get the following lower bounds for the chromatic number. 

To apply this lemma, we need cliques on $\distgraph[0]$, which can be obtained with inscribed regular simplices, so the following notation is useful. 

\begin{notation}
	Let $\ell_d$ denote the arclength between two vertices of the $(d+1)$-dimensional regular simplex inscribed in $\S^d$. Note we have the inequalitites $\ell_1<\ell_2<\ell_3<\dots<\pi$. 
\end{notation}

\begin{coro}\label{coro:clique_bound_eps_distance_graphs}
	 If $\epsilon\geq \C_d(\log n/n)^{1/d}$, and $\epsilon\to 0$, then the following hold:
	\begin{enumerate}
		\item If $\alpha=\ell_d$, then a.a.s. $d+3\leq\chi\left(\distgraph[n,\epsilon][\ell_d]\right)$.
		\item If $0< \alpha\leq \ell_{d-1}$, then a.a.s. $d+2\leq\chi\left(\distgraph[n,\epsilon]\right)$.
	\end{enumerate}
\end{coro}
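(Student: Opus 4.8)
\emph{Proof proposal.} The plan is to apply \cref{lemma:clique_lower_bound} after exhibiting explicit cliques of the right sizes in the $\epsilon=0$ distance graph $\distgraph[0]$. Part (1) is immediate: by the very definition of $\ell_d$, the $d+2$ vertices of the regular $(d+1)$-dimensional simplex inscribed in $\S^d$ are pairwise at geodesic distance $\ell_d$, hence they form a $(d+2)$-clique of $\distgraph[0][\ell_d]$. Applying \cref{lemma:clique_lower_bound} with $k=d+2$ then gives a.a.s. $d+3\leq\chi(\distgraph[n,\epsilon][\ell_d])$.

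For part (2) the plan is to produce, for every $\alpha\in(0,\ell_{d-1}]$, a $(d+1)$-clique of $\distgraph[0]$ by a \emph{polar cap} construction and then invoke \cref{lemma:clique_lower_bound} with $k=d+1$. Concretely, I would take the vertices $u_1,\dots,u_{d+1}\in\S^{d-1}\subset\R^d$ of the regular $d$-dimensional simplex, which satisfy $\langle u_i,u_j\rangle=-1/d$ for $i\neq j$, and for a parameter $\theta\in(0,\pi/2]$ set $v_i=(\sin\theta\, u_i,\cos\theta)\in\S^d\subset\R^{d+1}$. A short computation gives $\langle v_i,v_j\rangle=(1+\tfrac1d)\cos^2\theta-\tfrac1d$ for all $i\neq j$, so the points $v_i$ are pairwise at the common geodesic distance $f(\theta):=\arccos\!\big((1+\tfrac1d)\cos^2\theta-\tfrac1d\big)$. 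As $\theta$ runs over $[0,\pi/2]$ the quantity $(1+\tfrac1d)\cos^2\theta-\tfrac1d$ decreases continuously and strictly from $1$ to $-1/d$, so $f$ increases continuously and strictly from $f(0)=0$ to $f(\pi/2)=\arccos(-1/d)=\ell_{d-1}$. By the intermediate value theorem there is a $\theta$ with $f(\theta)=\alpha$ for every $\alpha\in(0,\ell_{d-1}]$, and $\{v_1,\dots,v_{d+1}\}$ is then the desired $(d+1)$-clique; \cref{lemma:clique_lower_bound} with $k=d+1$ yields a.a.s. $d+2\leq\chi(\distgraph[n,\epsilon])$.

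There is no real obstacle here: the content is entirely in the two clique constructions, and both are elementary. The only point that needs a little care is the strict monotonicity and the two endpoint values of $f$ in part (2), which are exactly what guarantee that the construction realizes the whole interval $(0,\ell_{d-1}]$ of pairwise distances; once that is in hand, the corollary follows from two applications of \cref{lemma:clique_lower_bound}.
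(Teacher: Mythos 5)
Your part (1) is exactly the paper's argument. Your part (2) is the same conceptual move as the paper's (place the regular $d$-simplex on a latitude $(d-1)$-sphere inside $\S^d$ and tune the latitude), but you carry out the computation explicitly and conclude via the intermediate value theorem, whereas the paper claims one can contract the $(d-1)$-sphere to radius $\alpha/\ell_{d-1}$ and that this rescaled sphere ``can be isometrically embedded into the unit sphere $\S^d$ (by intersecting a $d$-plane with $\S^d$ at the necessary height).'' Your version is actually the more careful one: a latitude sphere in $\S^d$ is not totally geodesic, so the ambient geodesic distance in $\S^d$ between two points of the latitude sphere is strictly smaller than the intrinsic geodesic distance on that latitude sphere; consequently the latitude sphere of radius $\alpha/\ell_{d-1}$ does \emph{not} have its simplex vertices at pairwise $\S^d$-geodesic distance $\alpha$, and the distance-preserving ``isometric embedding'' the paper invokes does not exist (only the weaker Riemannian, metric-tensor sense of isometric embedding holds). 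Your calculation, $\langle v_i,v_j\rangle=(1+\tfrac1d)\cos^2\theta-\tfrac1d$ hence $f(\theta)=\arccos\bigl((1+\tfrac1d)\cos^2\theta-\tfrac1d\bigr)$ with $f(0)=0$, $f(\pi/2)=\arccos(-1/d)=\ell_{d-1}$, together with strict monotonicity of $f$ and the IVT, is a clean and correct route to the same clique, and it is in fact what the paper's sketch needs to be replaced by. So: same strategy, but your explicit version closes a gap in the paper's.
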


\begin{proof}
	For (1), note the distance graph $\distgraph[0][\ell_d]$ has a $(d+2)$-clique, formed by the vertices of a regular inscribed $(d+1)$-simplex. So the result follows from applying the previous the lemma. 
	
	We now prove (2). As above, the vertices of the regular $d$-simplex form a $(d+1)$-clique in $\distgraph[0][\ell_{d-1}][d-1]$. If $0<\alpha\leq\ell_{d-1}$, then contracting $\S^d$ by a factor of $\frac{\alpha}{\ell_{d-1}}\leq 1$, produces a $(d-1)$-sphere of radius less than 1, where the arclenght between the vertices of the $d$-simplex is exactly $\alpha$. It's straightforward to check that this $(d-1)$-sphere can be isometrically embedded into the unit sphere $\S^d$ (by intersecting a $d$-plane with $\S^d$ at the necessary height). So the vertices of the $d$-simplex inscribed in $\frac{\alpha}{\ell_{d-1}}\S^{d-1}$ now form a $(d+1)$-clique on $\distgraph[0]$. Thus, applying the previous lemma, $d+2\leq\chi(\distgraph[n,\epsilon])$. 
\end{proof}

This lemma and corollary already exhibit some differences between $\epsilon$-distance graphs and usual distance graphs. Having a clique in a distance graph, will only give the trivial bound $\omega(G)\leq\chi(G)$, but for the $\epsilon$-distance graphs, we get a small improvement. As we will see in \cref{sec:conn_nei_complex_eps_distance_graphs}, the connectivity of the neighborhood complex cannot give better bounds than those in \cref{coro:clique_bound_eps_distance_graphs}, for $\alpha\neq\pi/2, \alpha\leq\ell_d$. What's even more, we will see in section \cref{sec:case_1d} that \cref{coro:clique_bound_eps_distance_graphs} already gives tight bounds for all $\distgraph[n,\epsilon][\alpha][1]$ in dimension $d=1$, for $0<\alpha\leq\ell_1$. 

In general, bounds for the chromatic number of distance graphs on spheres, also provide bounds for random $\epsilon$-distance graphs. The following lemma formalizes some of the most evident relations. 

\begin{lemma}\label{lemma:rel_distance_and_eps_distance_graphs}
	Let $X\subset\S^d$ be a finite $(\epsilon/2)$-net of $\S^d$, for $\epsilon$ sufficiently small.  
	\begin{enumerate}
		\item If $H$ is a finite subgraph of $\distgraph[0]$, then $\chi(H)\leq\chi(\distgraph[X,\epsilon])$.
		\item If there is a $k$-coloring for $\distgraph[0]$ with color classes $C_1, \dots, C_k$ such that for each $i$, we can partition $C_i=A_1^{(i)}\cup\cdots\cup A_{r_i}^{(i)}$, satisfying
		\begin{itemize}
			\item $\diam{A_j^{(i)}}<\alpha$ for all $j=1, \dots, r_i$, and
			\item $\dist{\bar{A_j^{(i)}},\bar{A_k^{(i)}}}>\alpha$, whenever $j\neq k$. 
		\end{itemize}
	then $\chi(\distgraph[X,\epsilon])\leq\chi(\distgraph)\leq k$. 
	\end{enumerate}
\end{lemma}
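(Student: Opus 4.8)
The plan is to handle the two parts separately, both with elementary triangle-inequality estimates in the spirit of the computations in the proof of \cref{lemma:clique_lower_bound}.

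For part (1) I would produce a graph homomorphism $\phi\colon H\to\distgraph[X,\epsilon]$; since a homomorphism $H\to G$ always gives $\chi(H)\le\chi(G)$ (precompose a proper coloring of $G$ with $\phi$), this suffices. As $X$ is an $(\epsilon/2)$-net, for each $v\in V(H)\subset\S^d$ choose $\phi(v)\in X$ with $\dist{v,\phi(v)}\le\epsilon/2$. If $v\sim w$ in $H$ then $\dist{v,w}=\alpha$ (as $H$ is a subgraph of $\distgraph[0]$), so two uses of the triangle inequality give $\alpha-\epsilon\le\dist{\phi(v),\phi(w)}\le\alpha+\epsilon$; moreover $\phi(v)\neq\phi(w)$, because otherwise $\dist{v,w}\le\epsilon<\alpha$ (this is the only place where ``$\epsilon$ sufficiently small'' — concretely $\epsilon<\alpha$ — is used in this part). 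Hence $\phi(v)\sim\phi(w)$ in $\distgraph[X,\epsilon]$ and $\phi$ is a homomorphism, so $\chi(H)\le\chi(\distgraph[X,\epsilon])$.

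For part (2), $\chi(\distgraph[X,\epsilon])\le\chi(\distgraph)$ holds trivially because $\distgraph[X,\epsilon]$ is an induced subgraph of $\distgraph$, so the content is the bound $\chi(\distgraph)\le k$. The idea is that the given color classes $C_1,\dots,C_k$ stay independent in the $\epsilon$-distance graph $\distgraph$ once $\epsilon$ is small enough, hence still properly $k$-color all of $\S^d$. Since there are finitely many classes and each $C_i=A^{(i)}_1\cup\dots\cup A^{(i)}_{r_i}$ with $r_i$ finite, the numbers $\alpha-\diam{A^{(i)}_j}$ and $\dist{\bar{A^{(i)}_j},\bar{A^{(i)}_l}}-\alpha$ (over all $i$ and all $j\neq l$) form a finite set of strictly positive reals; let $\epsilon_0>0$ be their minimum. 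Take $\epsilon<\epsilon_0$ and distinct $x,y\in C_i$: if $x,y$ lie in a common piece $A^{(i)}_j$, then $\dist{x,y}\le\diam{A^{(i)}_j}<\alpha-\epsilon$; if they lie in distinct pieces $A^{(i)}_j, A^{(i)}_l$, then $\dist{x,y}\ge\dist{\bar{A^{(i)}_j},\bar{A^{(i)}_l}}>\alpha+\epsilon$. Either way $\dist{x,y}\notin[\alpha-\epsilon,\alpha+\epsilon]$, so $x\not\sim y$ in $\distgraph$; thus $C_1,\dots,C_k$ is a proper coloring of $\distgraph$ and $\chi(\distgraph)\le k$.

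I do not anticipate a real obstacle; the delicate point, and the only one worth emphasizing, is that in part (2) ``$\epsilon$ sufficiently small'' genuinely depends on the chosen $k$-coloring and its partition. This is legitimate precisely because the partition is finite, which makes the relevant infimum of gaps a positive minimum $\epsilon_0$ rather than merely a nonnegative infimum; were infinitely many pieces allowed, with diameters approaching $\alpha$ or with closures at distances approaching $\alpha$ from above, no uniform threshold would exist, so the finiteness built into the hypothesis is essential.
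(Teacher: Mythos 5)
Your proposal is correct and follows essentially the same argument as the paper: in part (1) both you and the paper build a graph homomorphism $H\to\distgraph[X,\epsilon]$ by moving each vertex of $H$ to a nearby net point and checking adjacency via the triangle inequality, and in part (2) both use finiteness of the partition to pick $\epsilon$ below the smallest diameter/separation gap so the color classes $C_i$ remain independent in $\distgraph$. Your version is in fact slightly more careful on two minor points the paper glosses over — you explicitly verify $\phi(v)\neq\phi(w)$ (needing $\epsilon<\alpha$) so that the map really is a homomorphism, and you state the independence of each $C_i$ cleanly where the paper's phrasing is garbled.
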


\begin{proof}\leavevmode
	\begin{enumerate}
		\item Suppose $V(H)=\{y_1,\dots, y_k\}$. Since $X$ is an $(\epsilon/2)$-net, there are points $x_1, \dots, x_k\in X$ such that $\dist{x_i, y_i}\leq\epsilon/2$. We claim that the map $f:H\to \distgraph[X,\epsilon]$ that assigns $f(y_i)=x_i$ is a graph homomorphism. Thus, since $H\not\to K_{\chi(H)-1}$, we must also have $\distgraph[X,\epsilon]\not\to K_{\chi(H)-1}$, and so $\chi(H)\leq \chi(\distgraph[X,\epsilon])$. To prove the claim, suppose $y_i\sim y_j$ in $H$, so $\dist{y_i,y_j}=\alpha$, since $H\subset\distgraph[0]$. Then \begin{align*}
			\dist{x_i,x_j}&\leq\dist{x_i,y_i}+\dist{y_i,y_j}+\dist{y_j,x_j}\leq\alpha+\epsilon\text{ and,}\\
			\dist{x_i,x_j}&\geq \dist{y_i,y_j}-\dist{x_i,y_i}-\dist{y_j,x_j}\geq\alpha-\epsilon,
		\end{align*}
	so $x_i\sim x_j$ in $\distgraph[X,\epsilon]$ as desired. 
	
	\item Since there is a finite number of sets $A_j^{(i)}$, we can find the smallest $\epsilon$ such that $\diam{A_j^{(i)}}<\alpha-\epsilon$, for all $i$ and $j$, and such that $\dist{\bar{A_j^{(i)}},\bar{A-k^{(i)}}}>\alpha+\epsilon$ for all $j\neq k$, $i=1, \dots, k$. Then, clearly all edges in $\distgraph[\epsilon]$ must have both vertices on the same set $A_j^{(i)}$, so the color classes $C_1, \dots, C_k$ provide a proper $k$-coloring for $\distgraph[\epsilon]$ as well. \qedhere
	\end{enumerate}
\end{proof}

By the De Bruijn--Erd\H{o}s Theorem \cite[see][p.~215]{Diestel2017}, for every $d$ and $\alpha$, there is some finite subgraph $H\subset\distgraph[0]$ such that $\chi(H)=\chi(\distgraph[0])$, so part 1 of \cref{lemma:rel_distance_and_eps_distance_graphs} implies that a.a.s. $\chi(\distgraph[0])\leq\chi\left(\distgraph[n,\epsilon]\right)$, provided $\epsilon$ is as in \cref{lemma:eps_net_regime}. Note \cite[Lemma~1]{Kostina2019} states that for each fixed $0<\alpha<\pi$ the following bound holds:
$$(c(\alpha)+o(1))^d\leq\chi(\distgraph[0]),$$
where $c(\alpha)$ is a constant depending only on $\alpha$. Then, a.a.s. we get $$(c(\alpha)+o(1))^d\leq\chi(\distgraph[n,\epsilon])$$
for $0<\alpha<\pi$, provided $\epsilon$ is as in \cref{lemma:eps_net_regime}. 

On the other hand, while part 2  may seem too special, most constructions of upper bounds for spheres do satisfy these requirements, since they arise from tessellating the space with some cells $A_j^{(i)}$, and coloring them so that cells assigned the same color are far away. Thus, in general, the same known upper bounds for $\distgraph[0]$, which are exponential on $d$, are also upper bounds for $\distgraph[n,\epsilon]$ a.a.s. See \cite{Soifer2014,Prosanov2018,Raigorodskii2012,Lovasz1983,Kupavskii2011,Simmons1976,Coulson2002} for a complete overview. 

The cases $\alpha=0$ and $\alpha=\pi$, however, behave differently. The case $\alpha=\pi$ gives a random Borsuk graph, so it's chromatic number is a.a.s. $d+2$. The case $\alpha=0$, gives a random geometric graph on $\S^d$, for which it can be seen the chromatic number increases as $\epsilon\to 0$ and $n\to\infty$ \cite[see][]{Penrose2003} (in the regime from \cref{lemma:eps_net_regime}).

\section{Connectivity of Neighborhood Complex}\label{sec:conn_nei_complex_eps_distance_graphs}
In this section, we bound the connectivity of the neighborhood complex of $\distgraph[n,\epsilon]$, thus limiting how close this topological invariant can get to the chromatic number. 

\begin{theorem}\label{thm:best_conn_neigh_eps_distance_graphs}
	Fix a dimension $d\geq1$ and $0\leq\alpha\leq\pi$, $\alpha\neq\pi/2$.  If $\epsilon\geq \C_d(\log n/n)^{1/d}$ and $\epsilon\to0$, then a.a.s.  there is an injection $$\Z\hookrightarrow H_d\left(\Ne{\distgraph[n,\epsilon]}\right),$$ in particular, $\conn{\Ne{\distgraph[n,\epsilon]}}\leq d-1$.
\end{theorem}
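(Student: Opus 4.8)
The goal is to show that, a.a.s., $H_d(\Ne{\distgraph[n,\epsilon]})$ contains a copy of $\Z$, equivalently that the neighborhood complex carries a nontrivial $d$-cycle, which immediately forces $\conn{\Ne{\distgraph[n,\epsilon]}}\le d-1$. The plan is to exhibit an explicit map $\S^d\to\Ne{\distgraph[n,\epsilon]}$ (or rather a simplicial cycle) whose image is topologically a $d$-sphere that is not nullhomologous in the complex, using the $(\epsilon/4)$-net hypothesis from \cref{lemma:eps_net_regime} to guarantee enough vertices are present and using $\alpha\ne\pi/2$ in a crucial way. First I would recall the standard fact (from Lovász's setup) that for any graph $G$ there is a natural $\Z_2$-map from a combinatorial model of $G$ into $\Ne{G}$; more concretely, for distance graphs on the sphere the antipodal-type structure of neighborhoods is what we exploit. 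The key geometric observation is: for a point $x\in\S^d$, its neighborhood in $\distgraph[0]$ is (a thickening of) the sphere $\{y:\dist{x,y}=\alpha\}$, which is a $(d-1)$-sphere of a certain radius sitting in $\S^d$; when $\alpha\ne\pi/2$ this sphere is \emph{not} a great subsphere, and the assignment $x\mapsto$ (center of the complementary cap) is a well-defined continuous, indeed degree-$\pm1$, self-map of $\S^d$ up to homotopy.

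The concrete construction I would carry out: pick a fine triangulation $\mathcal{T}$ of $\S^d$ and, using the net property, choose for each vertex $v$ of $\mathcal{T}$ a vertex $\phi(v)\in X$ with $\dist{v,\phi(v)}\le\epsilon/4$. I claim $\phi$ extends to a simplicial map from (a subdivision of) $\S^d$ into $\Ne{\distgraph[X,\epsilon]}$: if $v_0,\dots,v_k$ span a simplex of $\mathcal{T}$, all within a tiny ball, then $\phi(v_0),\dots,\phi(v_k)$ all lie within $O(\epsilon)$ of a common point $p$, and then any point at distance $\alpha$ from $p$ is (after moving by $O(\epsilon)$ to a net vertex and using $\alpha-\epsilon\le\dist{\cdot}\le\alpha+\epsilon$) a common neighbor, so $\{\phi(v_0),\dots,\phi(v_k)\}$ is a simplex of $\Ne{\distgraph[X,\epsilon]}$. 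This produces a map $f\colon\S^d\to\Ne{\distgraph[n,\epsilon]}$. To see $f_*\colon H_d(\S^d)\to H_d(\Ne{\distgraph[n,\epsilon]})$ is injective, I would build a ``test'' map $g\colon\Ne{\distgraph[n,\epsilon]}\to\S^d$ and check $g\circ f\simeq\mathrm{id}_{\S^d}$. The natural candidate for $g$ sends a simplex $A$ (with $\bigcap_{v\in A}N(v)\ne\emptyset$) to the ``center of mass'' direction of $A$, or better, uses that all common neighbors of $A$ lie in a small spherical region whose antipodal-ish complement determines a point of $\S^d$; here is exactly where $\alpha\ne\pi/2$ enters, since for $\alpha=\pi/2$ the neighbor-sphere is a great sphere and the complementary region is symmetric, so no canonical point can be selected and the degree argument collapses.

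The main obstacle, and the step I expect to require the most care, is the construction of the retraction-like map $g$ and the verification that $g\circ f$ has degree $\pm1$ (equivalently is homotopic to a homeomorphism). One has to show that a simplex of the neighborhood complex — a set of vertices with a common neighbor — is genuinely ``small'' in a quantitative sense when $\epsilon$ is small and $\alpha$ is bounded away from $\pi/2$: if $v,w$ have a common neighbor $u$ then $\dist{v,u},\dist{w,u}\in[\alpha-\epsilon,\alpha+\epsilon]$, which does \emph{not} by itself bound $\dist{v,w}$ (both could be near-antipodal while each is at distance $\approx\alpha$ from $u$), so the definition of $g$ must instead use the whole set of common neighbors and extract from it a well-defined direction. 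Concretely I would let $g(A)$ be determined by the barycenter of $\bigcap_{v\in A}\overline{N(v)}$ (a nonempty spherical region), show this region has small diameter \emph{generically} or, more robustly, push the barycenter radially to $\S^d$ and argue the resulting assignment is continuous and simplicial after subdivision, then compute its degree against $f$ by a homotopy that straightens $\phi$ back to the identity. Once $g\circ f\simeq\mathrm{id}$, injectivity of $f_*$ on $H_d$ follows formally, giving $\Z\cong H_d(\S^d)\hookrightarrow H_d(\Ne{\distgraph[n,\epsilon]})$ and hence $\conn{\Ne{\distgraph[n,\epsilon]}}\le d-1$, since a $d$-connected complex has $H_d=0$.
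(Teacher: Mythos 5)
Your plan has the right overall shape, and your first candidate for the ``test'' map $g$ --- the center-of-mass direction of a simplex $A$, i.e.\ the piecewise-linear extension of the vertex map $v_x\mapsto x\in\S^d\subset\R^{d+1}$ followed by radial projection $y\mapsto y/\|y\|$ --- is exactly the map the paper uses. Likewise your map $f$, built from a fine triangulation of $\S^d$ pushed into the net with common neighbors produced by a triangle-inequality argument, plays the same role as the inclusion $\Del{X}\hookrightarrow\Ne{\distgraph[n,\epsilon]}$ in the paper, where $\Del{X}$ is the spherical Delaunay triangulation on the vertex set (a.a.s.\ ambiently isotopic to $\S^d$, and shown in \cref{thm:delaunay_subcomplex_neighbor} to be a subcomplex of the neighborhood complex). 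Choosing $\Del{X}$ rather than an arbitrary fine triangulation simplifies your last step: the restriction of the PL map to $\Del{X}$ is already a simplicial isomorphism onto $\Del{X}$ and the radial projection restricted to $\Del{X}$ is a homeomorphism onto $\S^d$ (\cref{lemma:ambien_homeo_delaunay}), so the composite on that subcomplex is literally a homeomorphism and the injection on $H_d$ drops out of a small commuting square, with no degree-via-homotopy computation.

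The genuine gap --- the step you correctly flag as ``the main obstacle'' but do not close --- is the well-definedness of the radial projection on the \emph{whole} complex: why does the PL image of every simplex of $\Ne{\distgraph[n,\epsilon]}$ miss the origin? You rightly observe that a simplex $A$ of $\Ne{G}$ can have nearly antipodal vertices, so no small-diameter argument is available; you then gesture at using the common-neighbor region $\bigcap_{v\in A}\overline{N(v)}$ and its barycenter, but this is not needed and it is not obviously continuous or simplicial as $A$ varies along faces. The observation that closes the gap (\cref{lemma:conv_hull_misses_zero}) is cleaner: every simplex of $\Ne{G}$ is contained in some $N(w)$, and for $\alpha\neq\pi/2$ and $\epsilon$ small, $N(w)$ lies inside a closed half-space $H(w)=\{x:\langle w,x\rangle\geq\langle w,w^+\rangle\}$ when $\alpha+\epsilon<\pi/2$, or $H(w)=\{x:\langle w,x\rangle\leq\langle w,w^-\rangle\}$ when $\alpha-\epsilon>\pi/2$, where $w^\pm$ is at geodesic distance $\alpha\pm\epsilon$ from $w$; a one-line computation with $\|w-w^\pm\|$ shows $0\notin H(w)$, hence $\mathrm{conv}(N(w))\subset H(w)$ avoids $0$. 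This is precisely where $\alpha\neq\pi/2$ enters (at $\pi/2$ the bounding hyperplane passes through the origin and the half-space fails to exclude it). Without this half-space argument your proposal stalls at the point you flagged; with it, the rest of your outline can be completed.
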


Note, however, that \cref{thm:best_conn_neigh_eps_distance_graphs} excludes the case $\alpha=\pi/2$, so there is still hope that topological methods could give efficient lower bounds for this particular case.

The proof of this theorem uses Delaunay Triangulations of the sphere, and depends strongly on the fact that the  geodesic Delaunay Triangulation is ambiently isotopic to $\S^d$, for a dense enough $\epsilon$-net. While we won't prove this, we give the necessary definition and results, following closely \cite{Leibon2000} and \cite{EdelShah97}. We refer the reader to \cite{EdelShah97,Leibon2000, Boissonnat2017} for all the details, and to \cite{Voronoi1908,Edelsbrunner1987,Delaunay1934} for more context. It is worth mentioning that \cite{Boissonnat2017} later disproved that the conditions described in \cite{Leibon2000} are sufficient to get geodesic Delaunay triangulations for an arbitrary Riemmannian Manifold, however, they do work for certain manifolds, including those with constant curvature, such as spheres \cite[p.~1]{Boissonnat2017}. 

\begin{defi}[Delaunay Triangulation]
	Given a finite set $X\subset\R^d$ in general position. It's \textbf{Delaunay Triangulation} is the simplicial complex $\Del{X}$ with simplices all subsets $\sigma\subset X$ for which there exists a closed ball $B$ such that $X\cap B=\sigma$ and $X\cap\interior{B}=\emptyset$. 
\end{defi}

Note that $\Del{X}$ has a natural geometric realization, namely, each $\sigma\in\Del{X}$, is the convex hull of its vertices in $\R^d$. Since $X$ is in general position, each $\sigma$ is guaranteed to be a geometric simplex.  

The same definition makes sense on a Riemannian manifold $M$, where \textbf{geodesic closed ball} means a set of the form $B(x,r)=\{y\in M: \dist[M]{x,y}\leq r\}$. 

\begin{defi}[Geodesic Delaunay]
	Let $M$ be a Riemannian manifold and $X\subset M$ be a finite subset. It's \textbf{geodesic Delaunay Triangulation} is the simplicial complex with simplices all subsets $\sigma\subset X$ for which there exists a geodesic closed ball $B\subset M$, such that $X\cap B=\sigma$ and $X\cap\interior{B}=\emptyset$. 
\end{defi}
While this certainly produces a simplicial complex, it is in general not unique, and it may not satisfy the same properties as the Euclidean Delaunay Triangulations. Among other things, the points $X$ should be \textit{generic} and \textit{dense} enough so that geodesics can be approximated by line segments. In \cite{Leibon2000}, they provide precise definitions of what this means in general, however, for the case $M=\S^d$, it is easy to check that these are trivially satisfied by any random $(\epsilon/4)$-net (with probability 1), provided $\epsilon$ is small. 

In \cite{Leibon2000}, they also study properties to ensure the geodesic Delaunay Triangulation is isomorphic to the Euclidean one. Their proofs involve restricted Voronoi Diagrams (as defined by \cite{EdelShah97}) and the duality between Delaunay Triangulations and Voronoi Diagrams. Instead, to keep our exposition short, we circumvent the introduction of Voronoi Diagrams by giving an explicit argument for spheres.  Note that any small geodesic ball on $\S^d$ is the intersection of $\S^d$ with a small Euclidean ball with the same center. Suppose $X\subset\S^d$ is a generic $(\epsilon/4)$-net, with $\epsilon$ small. Then, if $\sigma\in\Del{X}$ there is an Euclidean ball $B$ such that $X\cap B=\sigma$ and $X\cap\interior{B}=\emptyset$, taking $B'=B\cap\S^d$ is a geodesic ball that makes $\sigma$ a simplex in the geodesic Delaunay Triangulation. Similarly, if $\sigma$ is in the geodesic Delaunay, and $B'$ is a geodesic ball satisfying the definition, then taking the Euclidean ball with same center and Euclidean radius, makes $\sigma\in\Del{X}$.  

The following theorem from \cite{Leibon2000} guarantees $\Del{X}$ is homeomorphic to $\S^d$. While they provide a general result for submanifolds in $\R^d$, for spheres it simplifies to:

\begin{theorem}[{\cite[Thm.~5.3]{Leibon2000} for spheres}]\label{thm:delaunay_homeomorphic_sphere}
	If $X\subset \S^d$ is a generic, dense enough finite subset, then $\Del{X}$ is ambiently isotopic to $\S^d$. In particular, there is a homeomorphism $\Del{X}\approx \S^d$. 
\end{theorem}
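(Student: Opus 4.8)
The plan is to bypass the restricted‑Voronoi machinery of \cite{Leibon2000} and build an explicit model of $\Del{X}$ inside $\R^{d+1}$ that works precisely because $\S^d$ has constant curvature; the cost is some quantitative bookkeeping about what ``dense enough'' should mean.

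First I would identify $\Del{X}$ with the boundary complex of a convex polytope. Working with $X\subset\S^d\subset\R^{d+1}$, recall (as noted above) that a \emph{small} geodesic ball on $\S^d$ is exactly $\S^d$ intersected with a small Euclidean ball, so $\sigma\subset X$ is a geodesic‑Delaunay simplex precisely when some small spherical cap $C$ satisfies $X\cap C=\sigma$ and $X\cap\interior C=\emptyset$; letting $H$ be the hyperplane through the rim of $C$, this says that $H$ supports $\mathrm{conv}(X)$ and that $\sigma=X\cap H$ is the corresponding face. Conversely, I would use that an $(\epsilon/4)$-net is contained in no closed hemisphere (it meets a small ball antipodal to any prescribed hemisphere), so the outward cap cut off by any supporting hyperplane of $\mathrm{conv}(X)$ has radius $O(\epsilon)$ — because $\S^d$ is covered by the $(\epsilon/4)$-neighbourhood of the complementary cap — hence every face of $\mathrm{conv}(X)$ is realised by a small empty ball. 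Genericity of a random net removes (with probability $1$) any $d+2$ points on a common small circumsphere, so $\mathrm{conv}(X)$ is simplicial, and the conclusion is an equality of abstract simplicial complexes $\Del{X}=\partial\,\mathrm{conv}(X)$, a triangulated topological $d$-sphere.

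Next I would produce the ambient isotopy by radial interpolation. Since $X$ lies in no closed hemisphere, the origin is interior to $\mathrm{conv}(X)$, so $\partial\,\mathrm{conv}(X)$ is star‑shaped about $0$ and radial projection $v\mapsto v/\norm{v}$ restricts to a homeomorphism $\partial\,\mathrm{conv}(X)\to\S^d$; since $\partial\,\mathrm{conv}(X)$ is the geometric realisation of $\Del{X}$, this already yields $\Del{X}\approx\S^d$. Writing $\rho$ for the radial function of $\partial\,\mathrm{conv}(X)$, which satisfies $r_0\le\rho\le1$ for some $r_0>0$, I would set $\Psi_t(ru)=\lambda_t(r,u)\,u$ for $u\in\S^d$ and $r>0$, where $\lambda_t(\cdot,u)$ is a homeomorphism of $(0,\infty)$ depending continuously on $(t,u)$ with $\lambda_0=\mathrm{id}$, $\lambda_1(\rho(u),u)=1$, and $\lambda_t(r,u)=r$ for $r\notin[r_0/2,2]$; then $\{\Psi_t\}_{t\in[0,1]}$ is a compactly supported ambient isotopy of $\R^{d+1}$ carrying $\partial\,\mathrm{conv}(X)$ onto $\S^d$, which is the assertion.

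I expect the only genuine work to lie in the quantitative bookkeeping of the first step: pinning down how fine the net must be so that (i) every empty circumball is genuinely small, so that the geodesic and Euclidean Delaunay complexes coincide, and (ii) $\mathrm{conv}(X)$ is full‑dimensional and simplicial. I would also stress that this shortcut is special to constant curvature — there is no global radial projection on a general Riemannian manifold — which is exactly why \cite[Thm.~5.3]{Leibon2000} must argue the general statement through restricted Voronoi diagrams and a Closed Ball Property, and why \cite{Boissonnat2017} had to isolate the manifolds (among them $\S^d$) for which that argument actually goes through.
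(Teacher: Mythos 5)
Your argument is sound, and it takes a genuinely different route from the source: the paper itself offers no proof of this statement (it explicitly defers to \cite[Thm.~5.3]{Leibon2000}, whose argument runs through restricted Voronoi diagrams in the sense of \cite{EdelShah97} and a closed-ball property). What you do instead is exploit convex position: for $X\subset\S^d$ the empty-small-cap condition defining a geodesic Delaunay simplex is precisely the supporting-hyperplane condition defining a proper face of $\mathrm{conv}(X)$, so $\Del{X}$ is the boundary complex of a simplicial polytope with $0$ in its interior, and radial projection followed by radial interpolation supplies the ambient isotopy. The quantitative ingredients you flag are all routine and correct: an empty cap has geodesic radius at most $\epsilon/4$ because its center must be within $\epsilon/4$ of some point of the net, the net lies in no closed hemisphere so $0\in\interior{\mathrm{conv}(X)}$ and every outward cap is less than a hemisphere, and almost-sure general position (no $d+2$ points on a hyperplane) makes the hull simplicial. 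This buys a short, elementary, self-contained proof for spheres, and as a side benefit it repairs a soft spot in the paper's setup: the literal definition of $\Del{X}$ via arbitrary empty closed balls degenerates for cospherical point sets (the closed unit ball itself is an empty circumball containing all of $X$ on its boundary), whereas your small-cap/convex-hull dictionary pins down the intended complex unambiguously. What your shortcut gives up is exactly what you say it gives up: there is no radial projection or cap-to-hyperplane correspondence on a general Riemannian manifold, which is why \cite{Leibon2000} must argue through Voronoi cells and why \cite{Boissonnat2017} had to isolate the manifolds, including $\S^d$, for which that general argument actually succeeds.
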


In \cite[p.~344]{Leibon2000}, they describe this ambient isotopy $\Psi:\Del{X}\to\S^d$. For $x\in\Del{X}$, $\Psi(x)$ is the point on $\S^d$ closest to $x$ (in Euclidean distance). It is clear that for each point $x\in\R^{d+1}\setminus\{0\}$, the closest point of $x$ on $\S^d$ is the projection $\frac{x}{\|x\|}$, so we get the following easy but useful lemma. 

\begin{lemma}\label{lemma:ambien_homeo_delaunay}
	Let $X\subset\S^d$ be a generic, dense enough finite net so that $\Del{X}\approx\S^d$ as described above. Then, the homotopy equivalence $\Psi:\R^{d+1}\setminus\{0\}\to\S^d$ given by $\Psi(x)=\frac{x}{\|x\|}$ becomes a homeomorphism when restricted to $\Del{X}$. 
\end{lemma}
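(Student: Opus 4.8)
The plan is to recognize the restriction $\Psi|_{\Del{X}}$ as \emph{literally} the ambient isotopy of \cref{thm:delaunay_homeomorphic_sphere}: Leibon's map sends $x$ to the point of $\S^d$ nearest $x$ in Euclidean distance, and the nearest point of $\S^d$ to any $x\in\R^{d+1}\setminus\{0\}$ is exactly $x/\norm{x}$. Indeed, for $y\in\S^d$ we have $\norm{x-y}^2=\norm{x}^2-2\langle x,y\rangle+1\geq(\norm{x}-1)^2$ by Cauchy--Schwarz, with equality precisely when $y=x/\norm{x}$. So the nearest-point projection $\R^{d+1}\setminus\{0\}\to\S^d$ coincides with $\Psi$, and hence $\Psi|_{\Del{X}}$ coincides with the homeomorphism described immediately after \cref{thm:delaunay_homeomorphic_sphere} --- provided that $\Del{X}$, viewed as a subset of $\R^{d+1}$, avoids the origin so that $\Psi$ is defined and continuous on it.

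It therefore remains to check that $0\notin\Del{X}$, and this is where the density hypothesis on the net enters. Using the identification of $\Del{X}$ with the geodesic Delaunay triangulation established above, every (geodesic) Delaunay simplex $\sigma$ admits a geodesic ball $B=B(c,r)$ with $c\in\S^d$ and $X\cap\interior{B}=\emptyset$. Since $X$ is an $(\epsilon/4)$-net, the center $c$ lies within geodesic distance $\epsilon/4$ of some $p\in X$; as $p$ cannot be interior to $B$, we get $r\leq\epsilon/4$. Hence all vertices of $\sigma$ lie within geodesic distance $\epsilon/4$ of $c$, so the Euclidean convex hull $\sigma\subset\R^{d+1}$ is contained in a Euclidean ball of radius $\epsilon/4$ about $c\in\S^d$. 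Consequently $\norm{x}\geq 1-\epsilon/4$ for every $x\in\sigma$ (the opposite bound $\norm{x}\leq 1$ being automatic, as the unit ball is convex and contains $X$). Taking the union over the finitely many simplices of $\Del{X}$ shows, for $\epsilon$ small, that $\Del{X}$ lies in the shell $\{\,1-\epsilon/4\leq\norm{x}\leq 1\,\}$, and in particular $0\notin\Del{X}$.

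Combining the two points finishes the argument: $\Psi$ restricts to a continuous map on $\Del{X}$ which equals the nearest-point map, and by \cref{thm:delaunay_homeomorphic_sphere} the latter is a homeomorphism onto $\S^d$. The only step requiring any actual work is the circumradius estimate $r\leq\epsilon/4$ in the second paragraph, which however is immediate from the net property once the geodesic/Euclidean Delaunay identification is in hand; I do not expect any genuine obstacle here.
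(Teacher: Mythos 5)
Your proof follows the paper's route exactly: recognize $\Psi|_{\Del{X}}$ as Leibon's nearest-point projection and invoke \cref{thm:delaunay_homeomorphic_sphere}. The paper treats this identification as immediate; you additionally verify that the nearest-point map on $\R^{d+1}\setminus\{0\}$ is $x\mapsto x/\norm{x}$ and --- the step the paper silently needs --- that the geometric realization of $\Del{X}$ misses the origin, via a circumradius bound in the same spirit as \cref{lemma:diam_simplex_delaunay}, so that the restriction is even well-defined.
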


\subsection{Some auxiliary results}
We establish some intermediate results needed for \cref{thm:best_conn_neigh_eps_distance_graphs}. The following lemma bounds the diameter of the simplices in a Delaunay Triangulation.
   
\begin{lemma}\label{lemma:diam_simplex_delaunay}
	Let $X\subset\S^d$ be a finite $(\epsilon/4)$-net, with $\epsilon$ small enough so that $\Del{X}\approx\S^d$. Then for any simplex $\sigma=[v_0,\dots,v_k]\in\Del{X}$, we have $\dist[\S^d]{v_i,v_j}\leq\epsilon/2$ for all $0\leq i,j\leq k$. 
\end{lemma}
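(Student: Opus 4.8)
The plan is to unwind the Delaunay condition and then invoke the net hypothesis. Because $X$ is a generic $(\epsilon/4)$-net with $\epsilon$ small (small enough that $\Del{X}\approx\S^d$), the discussion preceding \cref{lemma:ambien_homeo_delaunay} lets me identify $\Del{X}$ with the geodesic Delaunay triangulation of $X$ on $\S^d$. So, given a simplex $\sigma=[v_0,\dots,v_k]\in\Del{X}$, I obtain a geodesic closed ball $B=B(c,r)=\{y\in\S^d:\dist[\S^d]{c,y}\le r\}$ with $X\cap B=\sigma$ and $X\cap\interior{B}=\emptyset$. In particular each $v_i$ lies in $B$, so $\dist[\S^d]{c,v_i}\le r$ for every $i$.

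The heart of the argument is the bound $r\le\epsilon/4$, which I would prove by contradiction. First note that $r<\pi$: otherwise $B=\S^d$, hence $\interior{B}=\S^d$ and $X\cap\interior{B}=X\neq\emptyset$, contradicting emptiness. For $r<\pi$, the topological interior $\interior{B}$ is exactly the open geodesic ball $\{y\in\S^d:\dist[\S^d]{c,y}<r\}$. Now suppose $r>\epsilon/4$. Since $X$ is an $(\epsilon/4)$-net, there is a point $x\in X$ with $\dist[\S^d]{c,x}\le\epsilon/4<r$, so $x\in X\cap\interior{B}$, a contradiction. Hence $r\le\epsilon/4$.

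The conclusion is then immediate from the triangle inequality: for all $0\le i,j\le k$,
$$\dist[\S^d]{v_i,v_j}\le\dist[\S^d]{v_i,c}+\dist[\S^d]{c,v_j}\le 2r\le\epsilon/2.$$
I do not expect a real obstacle here; the only steps needing a word of justification are the identification of $\Del{X}$ with its geodesic counterpart — already spelled out in the excerpt before \cref{lemma:ambien_homeo_delaunay} — and the description of $\interior{B}$, which holds for every geodesic ball of radius less than $\pi$ on the sphere. The lemma is really just the observation that an empty Delaunay ball cannot be larger than the covering radius of the net.
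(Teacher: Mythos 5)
Your proof is correct and follows essentially the same route as the paper's: unwind the geodesic Delaunay condition to produce an empty ball $B(c,r)$ around $\sigma$, use the $(\epsilon/4)$-net hypothesis at the center to force $r\le\epsilon/4$, and finish with the triangle inequality through $c$. The only differences are cosmetic — you phrase the radius bound as a contradiction where the paper argues directly, and you add the (welcome, if elided in the paper) checks that $r<\pi$ and that $\interior{B}$ is the open geodesic ball.
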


\begin{proof}
	Since $\sigma$ is in the Delaunay Triangulation, it is in the geodesic Delaunay, and there must be a geodesic ball $B(x,r)$ such that $X\cap B(x,r)=\sigma$ and $X\cap\interior{B(x,r)}=\emptyset$, for some $x\in\S^d$ and $r>0$. Thus $\dist{v_i,x}=r$ for $0\leq i\leq k$, and any other point of $X$ is at at a distance larger than $r$ from $x$. Since $X$ is an $(\epsilon/4)$-net it follows $\dist{v_i,x}\leq\epsilon/4$, and so $\dist{v_i,v_j}\leq \dist{v_i,x}+\dist{x,v_j}\leq\epsilon/2$.
\end{proof}

The following Theorem gives the relationship between the Delaunay Triangulation and the Neighborhood complex.
\begin{theorem}\label{thm:delaunay_subcomplex_neighbor}
	Fix a dimension $d$ and $0\leq\alpha\leq\pi$. Suppose $\epsilon\geq \C_d(\log n/n)^{1/d}$ and $\epsilon\to 0$.   Let $X$ denote the (random) vertex set of $\distgraph[n,\epsilon]$. Then a.a.s. $\Del{X}\subset\Ne{\distgraph[n,\epsilon]}$, as a sub-complex. 
\end{theorem}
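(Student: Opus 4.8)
The plan is to show that every Delaunay simplex $\sigma = [v_0,\dots,v_k]$ is a simplex of $\Ne{\distgraph[n,\epsilon]}$, i.e. that there exists a common neighbor $w \in X$ with $w \sim v_i$ in $\distgraph[n,\epsilon]$ for all $i$. By \cref{lemma:diam_simplex_delaunay} (applicable a.a.s.\ since \cref{lemma:eps_net_regime} lets us assume $X$ is an $(\epsilon/4)$-net dense enough that $\Del{X}\approx\S^d$), all vertices of $\sigma$ lie within geodesic distance $\epsilon/2$ of each other; in fact they all lie in a geodesic ball $B(x,r)$ with $r \le \epsilon/4$. So the whole simplex is concentrated near the point $x \in \S^d$. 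The idea is then to locate a point $p \in \S^d$ whose distance to $x$ is very close to $\alpha$ — more precisely, a point $p$ such that the geodesic ball $B(p, \epsilon/4)$ is entirely contained in the ``annulus'' $\{z : \alpha - \epsilon + \epsilon/4 \le \dist{z,x} \le \alpha + \epsilon - \epsilon/4\}$ — and then use the $(\epsilon/4)$-net property to produce $w \in X$ with $\dist{w,p} \le \epsilon/4$.

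First I would fix such a $p$: since $0 \le \alpha \le \pi$, there is a point $p \in \S^d$ with $\dist{p,x} = \alpha$ (take any geodesic from $x$ of length $\alpha$; for $\alpha=0$ take $p=x$, for $\alpha=\pi$ take the antipode). For $z$ with $\dist{z,p}\le \epsilon/4$, the triangle inequality gives $\alpha - \epsilon/4 \le \dist{z,x} \le \alpha + \epsilon/4$. Now apply the $(\epsilon/4)$-net property to $p$: there is $w \in X$ with $\dist{w,p} \le \epsilon/4$, hence $\alpha - \epsilon/4 \le \dist{w,x} \le \alpha + \epsilon/4$. Second, for each vertex $v_i$ of $\sigma$ we have $\dist{v_i,x} \le \epsilon/4$, so
\begin{align*}
\dist{w,v_i} &\le \dist{w,x} + \dist{x,v_i} \le \alpha + \epsilon/4 + \epsilon/4 = \alpha + \epsilon/2 \le \alpha + \epsilon,\\
\dist{w,v_i} &\ge \dist{w,x} - \dist{x,v_i} \ge \alpha - \epsilon/4 - \epsilon/4 = \alpha - \epsilon/2 \ge \alpha - \epsilon.
\end{align*}
Thus $w \sim v_i$ in $\distgraph[n,\epsilon]$ for every $i$, provided $w \ne v_i$. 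If some $v_i = w$ this would only happen when $\alpha$ is within $\epsilon/2$ of $0$, but $\alpha$ is a fixed constant and $\epsilon \to 0$, so for $n$ large (hence a.a.s.) $w \notin \sigma$; one can also just note that when $\alpha > 0$ the lower bound $\dist{w,v_i} \ge \alpha - \epsilon/2 > 0$ already forces $w \ne v_i$, and the case $\alpha = 0$ is degenerate and can be handled or excluded as needed. Hence $w$ is a common neighbor of all vertices of $\sigma$, so $\sigma \in \Ne{\distgraph[n,\epsilon]}$.

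I do not anticipate a serious obstacle here; the only things to be careful about are (i) invoking \cref{lemma:eps_net_regime} and \cref{thm:delaunay_homeomorphic_sphere} to justify ``a.a.s.'' and the applicability of \cref{lemma:diam_simplex_delaunay}, (ii) the bookkeeping with $\epsilon/4$ versus $\epsilon/2$, ensuring the slack is enough, and (iii) the trivial edge-case $w = v_i$, which is ruled out for small $\epsilon$. The essential geometric content is simply that a Delaunay simplex has diameter $\le \epsilon/2$ and sits inside a ball of radius $\le \epsilon/4$ about some center $x$, and that the net is fine enough to find a vertex at distance $\approx \alpha$ from $x$; everything else is the triangle inequality.
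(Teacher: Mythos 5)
Your proof for the case $\alpha>0$ is essentially the paper's argument: pick the center $x$ of the (geodesic) Delaunay ball witnessing $\sigma$, observe that all $v_i$ lie within $\epsilon/4$ of $x$, take a point $p$ at geodesic distance $\alpha$ from $x$, and use the $(\epsilon/4)$-net property to obtain a vertex $w$ near $p$ that is a common neighbor of all the $v_i$; the lower bound $\dist{w,v_i}\geq\alpha-\epsilon/2>0$ rules out $w=v_i$. That part is correct and matches the paper.

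The gap is the case $\alpha=0$, which the theorem statement explicitly includes (and which the paper needs, since \cref{thm:best_conn_neigh_eps_distance_graphs} is stated for $0\leq\alpha\leq\pi$, $\alpha\neq\pi/2$). Here your construction takes $p=x$ and produces $w\in X$ with $\dist{w,x}\leq\epsilon/4$, but that $w$ may well be one of the $v_i$: the vertices of $\sigma$ are precisely the points of $X$ closest to $x$, so the net guarantee gives you nothing new. And a coincidence $w=v_i$ is fatal, not merely inconvenient, because in $\Ne{G}$ a simplex $A$ requires $\bigcap_{v\in A}N(v)\neq\emptyset$ and a simple graph has $v_i\notin N(v_i)$, so the witnessing common neighbor must lie \emph{outside} $\sigma$. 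Saying the case is ``degenerate and can be handled or excluded'' does not close this. The paper closes it with a different, non-metric argument: since $\Del{X}\approx\S^d$, its 1-skeleton is connected, so some vertex $z\notin\sigma$ shares an edge $\{z,x_i\}\in\Del{X}$ with some $x_i\in\sigma$; then \cref{lemma:diam_simplex_delaunay} applied to that edge and to $\sigma$ gives $\dist{z,x_j}\leq\epsilon$ for all $j$, so $z$ is the required common neighbor. You need an argument of that kind (or a restriction of the theorem to $\alpha>0$) for the proof to be complete.
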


\begin{proof}
	From \cref{thm:delaunay_homeomorphic_sphere}, a.a.s. $\Del{X}$ is a simplicial complex homeomorphic to $\S^d$, with all facets of dimension $d$. Also, a.a.s. $X$ is an $(\epsilon/4)$-net. Suppose that these two properties are satisfied. Take a maximal simplex $\sigma=[x_0,x_1,\dots,x_d]\in\Del{X}$. 
	We will prove there exists a point $z\in X\setminus\{x_0,\dots, x_d\}$ such that $x_i\sim z$ in $\distgraph[X,\epsilon]$ for all $i=0,\dots, d$. Hence $\sigma\subset N(z)$ and so $\sigma\in\Ne{\distgraph[X,\epsilon]}$ as desired. We will do this by considering two cases:
	
	\begin{enumerate}
		\item \textbf{(When $0<\alpha$)}. For $\epsilon$ small enough, the geodesic Delaunay Triangulation produces the same simplicial complex. Since $\sigma$ is in the Delaunay Triangulation, there is a geodesic ball containing $\sigma$ in its boundary, let $y$ be its center on $\S^d$. Thus $\dist{x_i,y}\leq\epsilon/4$, since all the $x_i$'s are equidistant to $y$ and are its closest points in $X$, which is an $(\epsilon/4)$-net. 
		
		Let now $\hat{y}\in \S^d$ be such that $\dist{y,\hat{y}}=\alpha$. Since $X$ is a net, there must exist $z\in X$ such that $\dist{\hat{y},z}\leq \epsilon/4$. Applying the triangle inequality twice, we get  $\alpha-\epsilon\leq\dist{x_i,z}\leq\alpha+\epsilon$ for all the $x_i$'s. Moreover, since $\epsilon\to0$ we may assume it is small enough so that $\alpha-\epsilon>0$ and hence $\dist{x_i,z}\geq\alpha-\epsilon>0$, guaranteeing $z\neq x_i$ for all $i=0,\dots, d$. 
		
		\item \textbf{(When $\alpha=0$)}. Since $\Del{X}\approx\S^d$, the 1-skeleton $\Del{X}^{(1)}$ must be connected. In particular, there must exist another vertex $z\in X\setminus\{x_0, \dots, x_d\}$ such that $\{z,x_i\}\in\Del{X}^{(1)}$ for some $0\leq i\leq d$, since otherwise $\sigma^{(1)}\subsetneq\Del{X}^{(1)}$ would be a connected component. Applying \cref{lemma:diam_simplex_delaunay} to the simplices $\{x_i,z\}$ and $\{x_0,x_1,\dots,x_d\}$, we get $\dist{x_i,z}\leq\epsilon/2$ and $\dist{x_i,x_j}\leq\epsilon/2$, hence $\dist{z,x_j}\leq\epsilon$ for all $j=0,\dots,d$ and so $x_j\sim z$ in $\distgraph[X,\epsilon][0]$ as wanted.\qedhere 
	\end{enumerate}
	
\end{proof}

\begin{lemma}\label{lemma:conv_hull_misses_zero}
	Fix a dimension $d$ and $0\leq\alpha\leq\pi$, $\alpha\neq\pi/2$. Suppose $\epsilon$ is small enough so that either $\alpha+\epsilon<\pi/2$ or $\alpha-\epsilon>\pi/2$. Let $w\in\S^d$ and let $Y\subset N(w)$ be a finite set of neighbors of $w$ in the graph $\distgraph$. Then, $0\not\in\text{conv}(Y)$, where $\text{conv}(Y)$ is the convex hull of $Y$.
\end{lemma}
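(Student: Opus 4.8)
The plan is to exhibit a single linear functional on $\R^{d+1}$ that strictly separates the origin from $Y$, and the natural candidate is $\langle w,\cdot\rangle$. The point is that the hypothesis is designed precisely to keep the whole interval $[\alpha-\epsilon,\alpha+\epsilon]$ strictly on one side of $\pi/2$, which translates into $\langle w,\cdot\rangle$ having constant sign on the neighbors of $w$.

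First I would recall the standard dictionary between geodesic distance and inner product on the unit sphere: for unit vectors $u,v\in\S^d\subset\R^{d+1}$ one has $\langle u,v\rangle=\cos\dist{u,v}$, and $\dist{u,v}\in[0,\pi]$, a range on which $\cos$ is strictly decreasing. Consequently the sign of $\langle u,v\rangle$ is positive, zero, or negative according to whether $\dist{u,v}$ is less than, equal to, or greater than $\pi/2$. (If $Y=\emptyset$ then $\text{conv}(Y)=\emptyset$ and there is nothing to prove, so assume $Y\neq\emptyset$.)

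Next I would split into the two cases permitted by the hypothesis. Every $y\in Y$ is a neighbor of $w$ in $\distgraph$, so $\alpha-\epsilon\le\dist{w,y}\le\alpha+\epsilon$. If $\alpha+\epsilon<\pi/2$, then $\dist{w,y}<\pi/2$ and, since $\cos$ is decreasing, $\langle w,y\rangle=\cos\dist{w,y}\ge\cos(\alpha+\epsilon)>0$ for every $y\in Y$. If instead $\alpha-\epsilon>\pi/2$, then $\dist{w,y}>\pi/2$ and $\langle w,y\rangle=\cos\dist{w,y}\le\cos(\alpha-\epsilon)<0$ for every $y\in Y$. In either case there is a nonzero constant $c$ (equal to $\cos(\alpha+\epsilon)$ or $\cos(\alpha-\epsilon)$) such that $\langle w,y\rangle$ has the same strict sign as $c$, uniformly over $y\in Y$. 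Finally, any $p\in\text{conv}(Y)$ can be written $p=\sum_i\lambda_i y_i$ with $y_i\in Y$, $\lambda_i\ge0$, $\sum_i\lambda_i=1$, so $\langle w,p\rangle=\sum_i\lambda_i\langle w,y_i\rangle$ is a convex combination of reals all of the sign of $c$, hence is itself of that sign and in particular nonzero; since $\langle w,0\rangle=0$, this shows $0\notin\text{conv}(Y)$.

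I do not expect a real obstacle here: all the content sits in the hypothesis. The excluded value $\alpha=\pi/2$ is exactly the one where neighbors of $w$ lie on (or near) the equatorial great subsphere $w^{\perp}\cap\S^d$, and there they can easily surround $0$; ruling it out, together with taking $\epsilon$ small, is precisely what prevents $\langle w,\cdot\rangle$ from vanishing on $Y$. The only things to be careful about are the direction of the inequality $\cos$ reverses and recording that $c\neq0$.
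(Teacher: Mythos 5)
Your proof is correct and follows essentially the same line as the paper: both arguments use $\langle w,\cdot\rangle$ as the separating linear functional and split into the two cases $\alpha+\epsilon<\pi/2$ and $\alpha-\epsilon>\pi/2$. The only cosmetic difference is that you invoke the identity $\langle w,y\rangle=\cos\dist{w,y}$ directly, whereas the paper reaches the same sign conclusion via $\langle w,w^{\pm}\rangle=\tfrac{2-\|w-w^{\pm}\|^2}{2}$ and then packages it in the language of a closed half-space $H(w)$ containing $N(w)$ but not the origin.
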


\begin{proof}
		Let $w^{-}$ and $w^{+}\in\S^d$ be points such that $\dist[\S^d]{w,w^{\pm}}=\alpha\pm \epsilon$. Notice that all points over $\S^d$ at distance $\alpha\pm\epsilon$ from $w$ lie on $\S^d\cap\Pi_{\pm}(w)$, where $\Pi_{\pm}(w)$ is the $d$-plane given by $$\Pi_{\pm}(w)=\{x\in\R^{d+1}: \langle w,x \rangle = \langle w,w^{\pm}\rangle \}.$$
	Then, all points of $\S^d$ at distance at most $\alpha+\epsilon$ from $w$ lie in the semi-space
	$$H_{+}(w)=\{x\in\R^{d+1}: \langle w,x\rangle\geq \langle w, w^{+}\rangle\},$$
	and similarly, all points of $\S^d$ at distance at least $\alpha-\epsilon$ from $w$ lie in the semi-space
	$$H_{-}(w)=\{x\in\R^{d+1}: \langle w,x\rangle\leq \langle w, w^{-}\rangle\}.$$
	
	We then have two cases:
	\begin{itemize}
		\item If $\alpha<\pi/2$, we supposed $\alpha+\epsilon<\pi/2$, and let $H(w)=H_{+}(w)$. Since $\dist[\S^d]{w,w^+}=\alpha+\epsilon<\pi/2$, we must have $\|w-w^+\|<\sqrt{2}$, and thus
		$$\langle w,w^+\rangle=\frac{2-\|w-w^+\|^2}{2}>0=\langle w,0\rangle.$$
		So $0\not\in H(w)$. 
			
		\item If $\alpha>\pi/2$, we supposed $\alpha-\epsilon>\pi/2$, and let $H(w)=H_{-}(w)$. Since $\dist[\S^d]{w,w^-}=\alpha-\epsilon>\pi/2$, we must have $\|w-w^-\|>\sqrt{2}$, and thus
		$$\langle w,w^-\rangle=\frac{2-\|w-w^-\|^2}{2}<0=\langle w,0\rangle.$$
		So $0\not\in H(w)$.  		
	\end{itemize}
	
	Note that in both cases, all points $x\in\S^d$ such that $\alpha-\epsilon\leq \dist{w,x}\leq \alpha+\epsilon$ are always contained in the semi-space $H(w)$, which doesn't contain 0. Since $H(w)$ is convex, for any subset $Y\subset N(w)\subset H(w)$, so $\text{conv}(Y)\subset H(w)$, and hence $0\not\in\text{conv}(Y)$. 	
\end{proof}
 
\subsection{Proof of \cref{thm:best_conn_neigh_eps_distance_graphs}}

\begin{proof}
	By \cref{lemma:eps_net_regime}, a.a.s.  $X=V(\distgraph[n,\epsilon])$ is an $(\epsilon/4)$-net of $\S^d$, suppose this is the case. Also suppose $\epsilon$ is small enough so that $\Del{X}\approx\S^d$ and either $\alpha+\epsilon<\pi/2$ or $\alpha-\epsilon>\pi/2$. 
	
	Let $K\subset\R^N$ be a geometric realization of $\Ne{\distgraph[X,\epsilon]}$. By \cref{thm:delaunay_subcomplex_neighbor}, $\Del{X}$ is a subcomplex of the neighborhood complex, so in particular there is a subcomplex $L\subset K$ that is a geometric realization of $\Del{X}$. In particular, $L\approx\Del{X}$. 
	
	Since $V(\Ne{\distgraph[X,\epsilon]})=X$, for each point $x\in X$, there is a corresponding point $v_x\in K$. Let $\Phi:K\to\R^{d+1}$ be the piecewise-linear map defined by mapping the vertices of $K$ into their corresponding point on $X\subset\S^d\subset\R^{d+1}$, i.e. $\Phi(v_x)=x$. Note that $\Phi$ restricted to $L$ is a simplicial map $\Phi:L\to\Del{X}$, moreover, it is a simplicial isomorphism, so this restriction is actually a homeomorphism. 	

	Recall all facets of $K$ are of the form $N(w)$ where $w$ is a vertex in $X$. Let $\sigma=N(w)=\{v_0,\dots, v_k\}\in K$ be a facet of $K$. Since $\Phi$ is piece-wise linear, $\Phi(\sigma)$ is contained in the convex hull $\text{conv}\left(v_0,\dots, v_k\right)$. By \cref{lemma:conv_hull_misses_zero}, $0\not\in\text{conv}\left(v_0,\dots, v_k\right)$, so in particular $0\not\in\Phi(\sigma)$. Since this happens for all facets of $K$, we get $0\not\in\Phi(K)$, so actually $\Phi:K\to\R^{d+1}\setminus\{0\}$.
	
	Finally, consider the homotopy equivalence $\Psi:\R^{d+1}\setminus\{0\}\to\S^d$ from \cref{lemma:ambien_homeo_delaunay}, so $\Psi|_{\Del{X}}$ is a homeomorphism. 
	
	Putting all these maps together, we get the following commutative diagram. 
	
\begin{center}
	\begin{tikzcd}[row sep=large,column sep=large]
	K \arrow{r}{\Phi} & \R^{d+1}\setminus\{0\} \arrow[r,"\Psi","\simeq"'] &\S^d\\
	L \arrow[hook]{u}{i} \arrow[r,"\Phi","\approx"']& \Del{X}\arrow[hook]{u}{j} \arrow[r,"\Psi","\approx"'] & \S^d \arrow[u,equal] 
	\end{tikzcd}
\end{center}

Aplying the homology functor $H_d$, we get the following commuting diagram, where the homeomorphisms and the homotopy equivalence become group isomorphisms.

\begin{center}
	\begin{tikzcd}[row sep=large,column sep=large]
		H_d(K) \arrow{r}{\Phi^*} & H_d\left(\R^{d+1}\setminus\{0\}\right) \arrow["\approx"']{r}{\Psi^*} &\Z\\
		\Z \arrow[hook]{u}{i^*} \arrow[r,"\Phi^*","\approx"']& \Z \arrow[hook]{u}{j^*} \arrow[r,"\Psi^*","\approx"'] & \Z \arrow[u,equal] 
	\end{tikzcd}
\end{center}

Thus, by following two different paths from the lower left corner to the upper right corner, we get that $\Psi^*\circ \Phi^*\circ i^*$ is an automorphism of $\Z$, so in particular, the map $i^*$ must be injective, so we get the injection $\Z\hookrightarrow H_d(K)=H_d(\Ne{\distgraph[X,\epsilon]})$ as required. 
Finally, an application of Hurewicz Theorem \cite[see e.g.][Thm.~4.32]{Hatcher2002} implies that $\conn{\Ne{\distgraph[X,\epsilon]}}\leq d-1$. 
\end{proof}

\section[Case $d=1$]{Case $\bm{d=1}$}\label{sec:case_1d}
In this section we compute the chromatic number of $\distgraph[n,\epsilon][\alpha][1]$, for all values of $\alpha$, a.a.s. (provided $\epsilon\to0$ slow enough). We also compute the connectivity of the neighborhood complex for all $\alpha$. This is the only dimension where we know the complete picture. 

At the start of the chapter we mentioned that many of the bounds known for distance graphs on spheres also give the best known bounds for $\epsilon$-distance graphs. This shouldn't be interpreted as if our work on $\epsilon$-distance graphs is futile, since they actually can be different. We have already discussed that $\distgraph[n,\epsilon][\pi]=\Bor[n,\epsilon]$, so a.a.s. its chromatic number is $d+2$, while clearly $\chi(\distgraph[0][\pi])=2$. But here we will show that there also other values of $\alpha$ for which, $\chi(\distgraph[0][\alpha][1])\neq\chi(\distgraph[\epsilon][\alpha][1])$. 

We first compute the chromatic number of distance graphs on $\S^1$. 

\begin{theorem}\label{thm:distance_graphs_s1}
	Let $0<\alpha<\pi$. Then 
	$$\chi\left(\distgraph[0][\alpha][1]\right)=
	\begin{cases}
		3 & \text{ if } \frac{2\pi}{\alpha}\text{ is an odd integer}\\
		2 & \text{ otherwise}
	\end{cases}.$$
\end{theorem}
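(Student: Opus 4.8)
The plan is to work directly with the combinatorial structure of $\distgraph[0][\alpha][1]$, which is a circulant-type graph on the circle. Parametrize $\S^1$ by $[0,2\pi)$ with geodesic distance. Two points $x,y$ are adjacent exactly when their geodesic distance equals $\alpha$, i.e. when $y \equiv x \pm \alpha \pmod{2\pi}$. I would first observe that each connected component of this graph is determined by an orbit of the map $x \mapsto x+\alpha \pmod{2\pi}$ together with its inverse, and that the structure of such an orbit depends on whether $\alpha/(2\pi)$ is rational. The key reduction: since the chromatic number is determined by the supremum of chromatic numbers of finite subgraphs (De Bruijn–Erd\H{o}s, already invoked in the excerpt), and since every finite subset of any orbit sits inside an arithmetic-progression path or cycle under the rotation, it suffices to understand (a) when the graph contains an odd cycle, and (b) that it is never worse than a bipartite-plus-one situation. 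Note that $\alpha < \pi$ guarantees every vertex has degree $2$ (the two points $x\pm\alpha$ are distinct, since $2\alpha \ne 2\pi$), so each component is either a bi-infinite path, a finite path, or a cycle; a graph of maximum degree $2$ has chromatic number $2$ unless it contains an odd cycle, in which case it is $3$. So the whole theorem reduces to: \emph{$\distgraph[0][\alpha][1]$ contains an odd cycle if and only if $2\pi/\alpha$ is an odd integer.}

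For the odd-cycle analysis, a finite cycle in the graph corresponds to a sequence $x = x_0, x_1, \dots, x_k = x_0$ with each $x_{i+1} = x_i + \epsilon_i \alpha$, $\epsilon_i \in \{+1,-1\}$, and no earlier repetition; closing up forces $\left(\sum_i \epsilon_i\right)\alpha \equiv 0 \pmod{2\pi}$. Writing $\sum_i \epsilon_i = p$ and $k = $ number of steps (so $k \equiv p \pmod 2$), a cycle of length $k$ exists iff there is a nonzero even-or-matching-parity combination with $p\alpha = 2\pi m$ for integers $p, m$, subject to $|p| \le k$ and the walk being non-self-intersecting. The shortest odd cycle will come from taking all steps in the same direction: $k$ steps of $+\alpha$ closing up means $k\alpha = 2\pi m$, i.e. $\alpha = 2\pi m/k$; for this to be a genuine $k$-cycle (not a shorter one traversed multiple times) we need $\gcd(m,k)=1$. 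Such a cycle is odd iff $k$ is odd. One then checks: if $2\pi/\alpha = k$ is an odd integer, the all-$+\alpha$ walk gives a $k$-cycle, hence $\chi \ge 3$, and since $\Delta = 2$ we get $\chi = 3$. Conversely, if no odd cycle exists then $\chi \le 2$ (a connected graph with $\Delta \le 2$ and no odd cycle is bipartite), and this is forced as soon as $2\pi/\alpha$ is not an odd integer — I would argue that any closing relation $p\alpha = 2\pi m$ with $p$ odd, combined with rationality $\alpha/(2\pi) = m'/k'$ in lowest terms, forces $k' \mid p$, and a parity count on the minimal such walk shows the realized cycle has even length unless $k'$ itself is odd and equal to $2\pi/\alpha$; the irrational case has no finite cycles at all, hence is bipartite.

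The main obstacle I anticipate is the careful bookkeeping in the converse direction: showing that \emph{every} cycle — not just the monotone ones — has even length when $2\pi/\alpha$ is not an odd integer. The subtlety is that a cycle may mix $+\alpha$ and $-\alpha$ steps, and one must rule out that some clever back-and-forth walk produces an odd closed loop without self-intersection. The clean way to handle this is: in a component, fix a base vertex and define a $\Z/2$-valued "level" by the parity of the number of steps along any walk from the base; adjacency always flips the level, so a well-defined 2-coloring exists unless some closed walk has odd length, and an odd closed walk of minimal length is a cycle. Then the existence of an odd closed walk is exactly the existence of $\epsilon_i \in \{\pm1\}$ with $\sum \epsilon_i$ odd and $\left(\sum \epsilon_i\right)\alpha \equiv 0 \pmod{2\pi}$; writing $\sum\epsilon_i = q$ (odd), this says $q\alpha = 2\pi m$, i.e. $2\pi/\alpha = q/m$ with $q$ odd — and if $\gcd(q,m)=d$ with $q/d$ still odd we may reduce, so $2\pi/\alpha$ in lowest terms has odd numerator; but $2\pi/\alpha$ must then equal an odd integer precisely when its denominator is $1$, and I would close the remaining gap by noting $\alpha<\pi$ forces $2\pi/\alpha>2$ and a short argument (or direct reference to the degree-2 structure) that a non-integer value of $2\pi/\alpha$ with odd numerator still yields only even cycles within any single component. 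Once the "odd closed walk $\iff$ $2\pi/\alpha$ is an odd integer" equivalence is nailed down, the theorem follows immediately from the $\Delta = 2$ dichotomy.
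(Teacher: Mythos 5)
Your reduction to an odd-cycle question via the $\Delta=2$ observation is the right framework, and it is cleaner than the paper's own argument, which also reduces to odd cycles but with less justification. The forward direction is fine. The problem is the step you explicitly flag at the end: the claim that a non-integer value of $2\pi/\alpha$ with odd numerator ``still yields only even cycles within any single component'' cannot be closed, because it is false. If $\alpha/(2\pi)=m/q$ in lowest terms with $q$ odd and $m>1$ (so $2\pi/\alpha=q/m$ is not an integer), the component of a point $x$ is exactly the $q$ points $\{x+2\pi j/q : 0\le j<q\}$, and since $\gcd(m,q)=1$ the monotone walk $x,\,x+\alpha,\,x+2\alpha,\dots$ visits all of them before returning, giving a simple cycle of odd length $q$. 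Concretely, take $\alpha=4\pi/5<\pi$: then $2\pi/\alpha=5/2$ is not an odd integer, yet the five fifth roots of unity, each joined to the two that lie two steps away, form a pentagram $5$-cycle in $\distgraph[0][4\pi/5][1]$, so its chromatic number is $3$, not $2$.

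This is in fact a flaw in the theorem as stated, and the paper's proof of part (2) has the same blind spot: it asserts without argument that every odd cycle is ``the vertices of an inscribed regular odd-polygon'' and that no such polygon has side $\alpha$ unless $\alpha=2\pi/(2k+1)$ --- overlooking star polygons, whose geodesic side length is $2\pi m/q$ with $m>1$. The correct characterization, which your $\sum\epsilon_i$ bookkeeping essentially derives before you try to force it into the stated form, is that $\chi\left(\distgraph[0][\alpha][1]\right)=3$ precisely when $\alpha/(2\pi)$ is rational with odd denominator in lowest terms, and $\chi=2$ otherwise (including the irrational case, where no finite cycles exist); the upper bound $\chi\le3$ follows immediately from $\Delta=2$, as you note.
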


\begin{proof}\leavevmode
	\begin{enumerate}
		\item Suppose $2\pi/\alpha=2k+1$, so $\alpha=2\pi/(2k+1)$. Then, the regular $(2k+1)$-gon inscribed in $\S^1$ is actually an odd cycle of $\distgraph[0][\alpha][1]$, so $3\leq\chi(\distgraph[0][\alpha][1])$. Moreover, it is clear that we can partition $\S^1$ into disjoint $(2k+1)$-gons, by considering all the polygons \textit{rooted} at a vertex $x$ in the arc $\left[0,\frac{2\pi}{2k+1}\right)$. Thus, coloring each odd cycle independently with 3 colors, gives a proper 3-coloring of the distance graph.
		\item Suppose $\alpha\neq \frac{2\pi}{2k+1}$ for all natural $k$. It's straighforward to see that all odd cycles on $\distgraph[0][\alpha][1]$ must be the vertices of inscribed regular odd-polygons. But there are not odd polygons with side $\alpha\neq\frac{2\pi}{2k+1}$, so the graph has not odd cycles. It is a known fact in graph theory, that all graphs with not odd cycles are bipartite. So $\chi\left(\distgraph[0][\alpha][1]\right)=2$. \qedhere
	\end{enumerate}
\end{proof}

\begin{theorem}
	Let $0<\alpha\leq\pi$, $\alpha\neq\frac{2\pi}{3}$. Then, for $\epsilon$ small enough,	$$\chi(\distgraph[\epsilon][\alpha][1])\leq 3.$$
\end{theorem}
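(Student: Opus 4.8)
The plan is to apply part~2 of \cref{lemma:rel_distance_and_eps_distance_graphs}: it is enough to partition $\S^1$ into finitely many closed arcs, distributed among three classes $C_1,C_2,C_3$, so that (a)~every arc has geodesic diameter $<\alpha$, and (b)~any two arcs lying in the same class are at distance $>\alpha$. Indeed (a) and (b) together already make $C_1,C_2,C_3$ a proper $3$-coloring of $\distgraph[0][\alpha][1]$, since two points of the same color then lie at distance either $<\alpha$ (same arc) or $>\alpha$ (distinct same-class arcs), never exactly $\alpha$; so \cref{lemma:rel_distance_and_eps_distance_graphs}(2) will yield $\chi(\distgraph[\epsilon][\alpha][1])\le 3$ for all small enough $\epsilon$.

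To build such a partition I would fix a positive multiple of $3$, say $m$, tile $\S^1$ into $m$ equal arcs $A_0,\dots,A_{m-1}$ of length $\delta=2\pi/m$ (labelled cyclically), and color $A_j$ by $j\bmod 3$. Since $\delta\le 2\pi/3<\pi$, each arc is a minimizing geodesic, so its diameter equals $\delta$; hence (a) amounts to $m>2\pi/\alpha$. Two arcs of the same color are at geodesic distance at least $2\delta=4\pi/m$, this minimum being realized by consecutive same-colored arcs (cyclic index-gap $3$), while for $m=3$ each class is a single arc and (b) is vacuous; hence (b) amounts to $m<4\pi/\alpha$. So the whole construction works as soon as we can pick a multiple of $3$ in the open interval $(2\pi/\alpha,\,4\pi/\alpha)$.

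The remaining step is an elementary case split. If $\alpha>2\pi/3$ then $2\pi/\alpha<3<4\pi/\alpha$ (the right inequality being merely $\alpha<4\pi/3$), so $m=3$ works. If $\alpha<2\pi/3$ then $(2\pi/\alpha,\,4\pi/\alpha)$ has length $2\pi/\alpha>3$, and an open interval of length exceeding $3$ always contains a multiple of $3$ --- for example $3\big(\lfloor(2\pi/\alpha)/3\rfloor+1\big)$ --- so a valid $m$ exists. I do not expect a genuine obstacle here: the only non-bookkeeping ingredient is that interval observation, and the construction visibly breaks down exactly at the forbidden value $\alpha=2\pi/3$, where the natural choice $m=3$ gives arcs of diameter $2\pi/3=\alpha$ and the interval $(3,6)$ holds no multiple of $3$. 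The two points needing care are the identification of the geodesic diameter of a short arc with its length, and the value $2\delta$ for the minimal separation of same-colored arcs; both hold because every arc here has length at most $2\pi/3<\pi$.
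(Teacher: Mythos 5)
Your proof is correct, and it uses the same core construction as the paper's: partition $\S^1$ into $N$ equal arcs with $3\mid N$ and color them cyclically by index modulo $3$. Where you differ is in showing that a suitable $N$ exists. You distill the requirement to a single clean criterion --- $N$ should be a multiple of $3$ lying in the open interval $(2\pi/\alpha,\ 4\pi/\alpha)$ --- and then observe that such an $N$ exists for every $\alpha\in(0,\pi]$ except $\alpha=2\pi/3$, where the interval degenerates to $(3,6)$ and has no interior multiple of $3$. The paper instead runs a three-way case split with an ad hoc choice of $N$ involving powers of two ($N=3$, $N=3\cdot 2^m$, or $N=3(2^m+1)$, depending on where $\alpha$ lies), verifying in each case the same pair of inequalities $2\pi/N<\alpha$ and $4\pi/N>\alpha$; this accomplishes the same thing but obscures the unified criterion and why $2\pi/3$ is the unique obstruction. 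You also factor the $\epsilon$-perturbation through \cref{lemma:rel_distance_and_eps_distance_graphs}(2) rather than redoing the triangle-inequality bookkeeping in each case, which is a sensible economy. One small formal point: take the arcs half-open (as the paper does, $A_i=[F_i,F_{i+1})$) so that they genuinely partition $\S^1$ as that lemma requires; the diameter and closure-separation estimates are unaffected.
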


\begin{proof}
	Let $N$ be an integer that we specify later. Let $F_1,\dots, F_N$ be the vertices of a regular $N$-gon in $\S^1$, so $\dist[\S^1]{F_i,F_{i+1}}=\ell$ is constant. Denote by $A_i=[F_i,F_{i+1})\subset\S^1$, the circular arc between $F_i$ and $F_{i+1}$, containing $F_i$ but not $F_{i+1}$.  Note $A_i, \dots, A_N$ are a partition of $\S^1$.
	Now we color $\S^1$ coloring each $x\in A_i$ with color \mbox{$c(x)=i\pmod 3$}.
	
	To prove that this is indeed a proper coloring we need to specify $N$, we do this in three different cases:
	
	\begin{enumerate}
		\item (\textbf{If} $\frac{2\pi}{3}<\alpha\leq\pi$). 	Let $N=3$. So each color is assigned to exactly one arc $A_1,A_2,A_3$. Note $\diam{A_i}=\frac{2\pi}{N}=\frac{2\pi}{3}<\alpha-\epsilon$ provided $\epsilon$ is small enough. No edge can be formed between vertices of the same color, so the coloring is proper. 
		
		\item (\textbf{If} $0<\alpha<\frac{2\pi}{3}$ and $\alpha\neq\frac{2\pi}{3\cdot 2^m}$ for all $m\in \Z$).	
		There exits $m\geq1$ such that $\frac{2\pi}{3\cdot 2^m}<\alpha<\frac{2\pi}{3\cdot 2^{m-1}}<\pi$. Suppose $\epsilon$ is small enough so that		
		$$\frac{2\pi}{3\cdot 2^m}<\alpha-\epsilon<\alpha+\epsilon<\frac{2\pi}{3\cdot 2^{m-1}}.$$		
		Let $N=3\cdot2^m$. Then $\diam{A_i}=\frac{2\pi}{N}<\alpha-\epsilon$, so again no edge exists between vertices in the same arc. Moreover, since $N$ is a multiple of 3, the closest arcs to $A_i$ of the same color must be $A_{i-3}$ and $A_{i+3}$ (indices modulo $N$). But it is clear that%
		$$\dist{A_i, A_{i+3}}=\dist{F_{i+1},F_{i+3}}=\frac{4\pi}{N}>\alpha+\epsilon$$
		So no edges can be formed between vertices of the same color. 
		
		\item (\textbf{If} $\alpha=\frac{2\pi}{3\cdot 2^m}$ for some $m\geq 1$).		
		Then $\frac{2\pi}{3(2^m+1)}<\alpha<\frac{4\pi}{3(2^m+1)}\leq\pi$. So let $N=3(2^m+1)$, and suppose $\epsilon$ is small enough so that		
		$$\frac{2\pi}{3(2^m+1)}<\alpha-\epsilon<\alpha+\epsilon<\frac{4\pi}{3(2^m+1)}$$
		Repeating the argument of Case (2), shows that $c$ is a proper 3 coloring. \qedhere
	\end{enumerate}
	
\end{proof}

\begin{coro}
	Let $0<\alpha\leq\pi$, $\alpha\neq\frac{2\pi}{3}$. Then, for $\epsilon$ small enough
	$$\chi(\distgraph[n,\epsilon][\alpha][1])\leq 3.$$
\end{coro}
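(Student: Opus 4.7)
The plan is to observe that this corollary is an immediate consequence of the preceding theorem, with no probabilistic content whatsoever. The random $\epsilon$-distance graph $\distgraph[n,\epsilon][\alpha][1]$ is by construction the induced subgraph of $\distgraph[\epsilon][\alpha][1]$ on the vertex set $X = \{X_1, \dots, X_n\} \subset \S^1$. For any graph $G$ and any induced subgraph $H \subset G$, one has $\chi(H) \leq \chi(G)$, since the restriction of a proper $k$-coloring of $G$ to $V(H)$ is a proper $k$-coloring of $H$.

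So the entire argument is: fix $\epsilon$ small enough that the previous theorem applies (whose hypothesis on $\epsilon$ depends only on $\alpha$, not on $n$ or the random sample). By that theorem there is a proper $3$-coloring $c:\S^1\to\{0,1,2\}$ of $\distgraph[\epsilon][\alpha][1]$. Restrict $c$ to $X$; this yields a proper $3$-coloring of $\distgraph[n,\epsilon][\alpha][1]$, deterministically and for every realization of the random sample. Hence $\chi(\distgraph[n,\epsilon][\alpha][1]) \leq 3$.

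There is no obstacle; in particular no asymptotic or a.a.s. statement is needed, because the bound is pointwise in the sample. The only mild point worth flagging in the write-up is that the threshold on $\epsilon$ is the same as in the theorem, namely whatever is required so that in each of the three cases the strict inequalities $\alpha - \epsilon > 2\pi/N$ and $\alpha + \epsilon < 4\pi/N$ (with $N$ chosen as in the theorem depending on $\alpha$) hold; this $\epsilon$ depends only on $\alpha$. The proof can thus be a single short paragraph invoking the previous theorem and the monotonicity of chromatic number under taking induced subgraphs.
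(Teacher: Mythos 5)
Your proposal is correct and is essentially identical to the paper's own proof, which also just notes that $\distgraph[n,\epsilon][\alpha][1]$ is a subgraph of $\distgraph[\epsilon][\alpha][1]$ and invokes monotonicity of the chromatic number together with the preceding theorem. Your additional remark that the bound is deterministic (pointwise in the sample) and that the threshold on $\epsilon$ depends only on $\alpha$ is accurate and harmless.
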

\begin{proof}
	Since $\distgraph[n,\epsilon][\alpha][1]\subset \distgraph[\epsilon][\alpha][1]$ then $\chi(\distgraph[n,\epsilon][\alpha][1])\leq\chi(\distgraph[\epsilon][\alpha][1])\leq 3$.
\end{proof}

\begin{coro}\label{coro:chi_dim1_equal_3}
	If $0<\alpha\leq \pi$, $\alpha\neq\frac{2\pi}{3}$, and $\epsilon\geq \C_1(\log n/n)$, with $\epsilon\to 0$. Then a.a.s. $$\chi\left(\distgraph[n,\epsilon][\alpha][1]\right)=3.$$
\end{coro}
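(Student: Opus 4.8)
The plan is to combine the lower and upper bounds already available in the paper. For the upper bound, the preceding corollary gives that a.a.s. $\chi(\distgraph[n,\epsilon][\alpha][1])\leq 3$ under the hypothesis $0<\alpha\leq\pi$, $\alpha\neq\frac{2\pi}{3}$, provided $\epsilon$ is small enough; since $\epsilon\to 0$ this smallness condition is eventually met, so a.a.s. $\chi(\distgraph[n,\epsilon][\alpha][1])\leq 3$.

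For the lower bound I would split according to whether $\alpha=\pi$ or $\alpha<\pi$. If $\alpha=\pi$, then $\distgraph[n,\epsilon][\pi][1]=\Bor[n,\epsilon]$ is a random Borsuk graph in dimension $d=1$, whose chromatic number is a.a.s. $d+2=3$, as recalled in the discussion after \cref{lemma:rel_distance_and_eps_distance_graphs}; alternatively one invokes \cref{thm:best_conn_neigh_eps_distance_graphs} with $d=1$ together with \cref{thm:conn-neigh-Lovasz}. If $0<\alpha<\pi$ and $\alpha\neq\pi/2$, the same two results apply directly: \cref{thm:best_conn_neigh_eps_distance_graphs} (which needs $\epsilon\geq\C_1(\log n/n)$ and $\epsilon\to0$) gives a.a.s. $\conn{\Ne{\distgraph[n,\epsilon][\alpha][1]}}\leq 0$, but in fact it produces the injection $\Z\hookrightarrow H_1(\Ne{\distgraph[n,\epsilon][\alpha][1]})$, which forces $\Ne{\distgraph[n,\epsilon][\alpha][1]}$ to be non-simply-connected, hence $\conn{\Ne{\distgraph[n,\epsilon][\alpha][1]}}\geq 0$ (it is connected, being the neighborhood complex of a graph with cliques by \cref{coro:clique_bound_eps_distance_graphs}); combined with \cref{thm:conn-neigh-Lovasz} this yields $3=0+3\leq\chi(\distgraph[n,\epsilon][\alpha][1])$ a.a.s.

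The only remaining case is $\alpha=\pi/2$, which is excluded from \cref{thm:best_conn_neigh_eps_distance_graphs}. Here I would instead use a direct clique argument: $2\pi/\alpha=4$ is not an odd integer but one still has an odd cycle available after an $\epsilon$-perturbation, or more simply observe that for $\alpha=\pi/2$ one can inscribe a suitable odd-length configuration whose $\epsilon$-neighborhood persists — concretely, apply \cref{lemma:clique_lower_bound} with an odd cycle in $\distgraph[0][\beta][1]$ for $\beta$ slightly below $\pi/2$ with $2\pi/\beta$ an odd integer, noting such $\beta$ with $\beta\in(\alpha-\epsilon,\alpha+\epsilon)$ exists once $\epsilon$ is large enough relative to the spacing $\frac{2\pi}{2k-1}-\frac{2\pi}{2k+1}$; since the hypothesis forces $\epsilon\geq\C_1(\log n/n)$, which decays only polynomially while we need $\epsilon\gtrsim k^{-2}$ and $k\sim 1/\alpha$ is fixed, this is satisfied for $n$ large. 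This gives an odd cycle in $\distgraph[n,\epsilon][\pi/2][1]$ a.a.s., hence $\chi\geq 3$.

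The main obstacle is the bookkeeping at $\alpha=\pi/2$: one must check that a nearby odd-polygon value $\beta=2\pi/(2k+1)$ lands inside the window $(\alpha-\epsilon,\alpha+\epsilon)$, which uses that $\epsilon$ is not too small — this is where the explicit lower bound on $\epsilon$ in the hypothesis is essential rather than cosmetic. Everywhere else the statement is a routine assembly of results proved earlier in the paper.
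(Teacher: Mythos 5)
Your upper bound is fine and matches the paper. The lower bound, however, has a genuine gap, and the route you take is far more complicated than it needs to be.

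The paper's proof of the lower bound is one line: for any $0<\alpha\leq\pi$, pick $x_0,x_1\in\S^1$ with $\dist{x_0,x_1}=\alpha$; this is a $2$-clique in $\distgraph[0][\alpha][1]$, so \cref{lemma:clique_lower_bound} gives $\chi(\distgraph[n,\epsilon][\alpha][1])\geq 3$ a.a.s.\ — uniformly in $\alpha$, with no case split and no mention of $\pi/2$. That lemma exploits precisely the phenomenon that in the $\epsilon$-distance graph (unlike the exact distance graph) a clique on $\S^d$ forces $\chi\geq k+1$ rather than just $\chi\geq k$.

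Your argument has two concrete problems. First, for the range $0<\alpha<\pi$, $\alpha\neq\pi/2$, you want to apply \cref{thm:conn-neigh-Lovasz}, which requires a \emph{lower} bound $\conn{\Ne{\cdot}}\geq 0$, i.e.\ connectedness of the neighborhood complex. But \cref{thm:best_conn_neigh_eps_distance_graphs} gives an \emph{upper} bound on connectivity, and your parenthetical justification (that the complex is connected ``being the neighborhood complex of a graph with cliques by \cref{coro:clique_bound_eps_distance_graphs}'') is wrong: that corollary says nothing about the topology of $\Ne{G}$, and having a clique in $G$ does not imply $\Ne{G}$ is connected. Connectedness is a separate fact, proved in the paper as \cref{thm:conn_neigh_dim1}, which you never cite. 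Second, and more seriously, your treatment of $\alpha=\pi/2$ does not work. You want some $\beta=2\pi/(2k+1)$ inside $(\pi/2-\epsilon,\pi/2+\epsilon)$. The closest such values to $\pi/2\approx1.57$ are $2\pi/5\approx1.26$ and $2\pi/3\approx2.09$, both at distance $\approx0.3$ from $\pi/2$, a fixed gap. Since the hypothesis also requires $\epsilon\to0$, the window $(\pi/2-\epsilon,\pi/2+\epsilon)$ eventually excludes every odd-polygon length; the lower bound $\epsilon\geq\C_1(\log n/n)$ is not ``large enough relative to the spacing'' — it just prevents $\epsilon$ from shrinking too fast relative to $n$. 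So this case is left unproved in your proposal, whereas the paper's $2$-clique argument handles it trivially.
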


\begin{proof}
	Let $x_0\in\S^1$, there exists $x_1$ such that $\dist[\S^1]{x_0,x_1}=\alpha$, so $\{x_0,x_1\}$ is a 2-clique of $\distgraph[0][\alpha][1]$. By \cref{lemma:clique_lower_bound}, $\chi(\distgraph[n,\epsilon][\alpha][1])\geq3$, so, by the previous corollary, the result follows. 
\end{proof}

\begin{theorem}\label{thm:conn_neigh_dim1}
	If $0\leq\alpha\leq \pi$ and $\epsilon\geq \C_1(\log n/n)$, with $\epsilon\to0$, then a.a.s. $\Ne{\distgraph[n,\epsilon][1]}$ is connected
\end{theorem}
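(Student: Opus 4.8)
The plan is to reduce the statement to two combinatorial facts about the graph $G:=\distgraph[n,\epsilon][\alpha][1]$, namely that a.a.s.\ $G$ is connected and a.a.s.\ $G$ is non-bipartite, using the following elementary observation: if a graph $G$ is connected and non-bipartite, then $\Ne{G}$ is connected. Indeed, such a $G$ has an edge (hence no isolated vertices, so $V(\Ne{G})=V(G)$) and an odd cycle. Given vertices $u\neq v$, take any walk from $u$ to $v$; if its length is odd, splice in an odd closed walk at $v$ obtained from an odd cycle of $G$ and a path joining $v$ to it, to get an even walk $u=w_0,w_1,\dots,w_{2m}=v$ with $m\geq1$. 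For each $i$ the vertex $w_{2i+1}$ is a common neighbour of $w_{2i}$ and $w_{2i+2}$, so $\{w_{2i},w_{2i+2}\}$ is an edge of $\Ne{G}$; concatenating these gives a path from $u$ to $v$ in $\Ne{G}$, so $\Ne{G}$ is connected.

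\emph{Connectivity of $G$.} By \cref{lemma:eps_net_regime}, a.a.s.\ the vertex set $X$ of $G$ is an $(\epsilon/4)$-net of $\S^1$; assume this, and that $\epsilon$ is small. If $\alpha=0$, then $G$ is exactly the graph $H$ of \cref{lemma:net_connected} for the parameter $\epsilon$ (and $X$ is in particular an $(\epsilon/2)$-net), hence connected. If $\alpha>0$, I would first check that any two net points $u,v$ with $\dist{u,v}\leq\epsilon/2$ are joined by a length-$2$ path in $G$: pick $u'\in\S^1$ with $\dist{u,u'}=\alpha$ and a net point $w$ with $\dist{w,u'}\leq\epsilon/4$; then two uses of the triangle inequality give $\alpha-\epsilon\leq\dist{u,w}\leq\alpha+\epsilon$ and $\alpha-\epsilon\leq\dist{v,w}\leq\alpha+\epsilon$, while $\dist{u,w},\dist{v,w}>0$ for $\epsilon$ small, so $u\sim w\sim v$ in $G$. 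Now \cref{lemma:net_connected}, applied with $\epsilon/2$ in place of $\epsilon$ (valid since $X$ is an $(\epsilon/4)$-net), shows the graph on $X$ with edges $\dist{\cdot,\cdot}\leq\epsilon/2$ is connected; replacing each of its edges by the corresponding length-$2$ path in $G$ shows that $G$ is connected.

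\emph{Non-bipartiteness of $G$.} If $0<\alpha\leq\pi$, then $\distgraph[0][\alpha][1]$ contains a $2$-clique (any two points at geodesic distance $\alpha$), so \cref{lemma:clique_lower_bound} yields a.a.s.\ $\chi(G)\geq 3$, hence $G$ is a.a.s.\ non-bipartite. If $\alpha=0$, then on the a.a.s.\ event that $X$ is an $(\epsilon/4)$-net — which for small $\epsilon$ forces $|X|\geq 3$ — any three cyclically consecutive net points $a_0,a_1,a_2$ satisfy $0<\dist{a_i,a_j}\leq\epsilon$ for $i\neq j$, hence form a triangle in $G$, so $G$ is non-bipartite. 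In either case $G$ is a.a.s.\ connected and non-bipartite, so by the observation $\Ne{G}$ is a.a.s.\ connected.

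The content of the argument is the initial observation and the length-$2$ bridging in the $\alpha>0$ connectivity step; the main (and fairly minor) nuisance is the constant-chasing there — making sure the bridging vertex $w$ is a genuine neighbour of both $u$ and $v$ and distinct from them — together with treating the endpoints $\alpha=0$ and $\alpha=\pi$ separately. The rest is a direct assembly of \cref{lemma:eps_net_regime}, \cref{lemma:net_connected} and \cref{lemma:clique_lower_bound}.
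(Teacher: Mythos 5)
Your proposal is correct, and it takes a genuinely different route from the paper. The paper's proof is direct and local: for each vertex $x_0$ with clockwise successor $x_1$ in the net, it constructs a common neighbour $x'$ of $x_0$ and $x_1$ in $G$ (using a point at distance $\alpha$ from the arc midpoint of $[x_0,x_1]$), so $\{x_0,x_1\}$ is a $1$-simplex of $\Ne{G}$; since this holds for every consecutive pair around $\S^1$, the $1$-skeleton of $\Ne{G}$ is connected. Your approach instead factors out the purely graph-theoretic lemma that a connected non-bipartite graph has connected neighbourhood complex (via even walks), and then checks the two hypotheses using infrastructure already in the paper: \cref{lemma:eps_net_regime} plus \cref{lemma:net_connected} with a length-$2$ bridging argument for connectivity of $G$, and \cref{lemma:clique_lower_bound} with a $2$-clique for non-bipartiteness. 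What the paper's argument buys is brevity and explicitness — it never needs non-bipartiteness of $G$ as a separate ingredient, since a single common neighbour already links consecutive net points in $\Ne{G}$. What your argument buys is modularity: the reduction to ``connected and non-bipartite'' is a clean, reusable fact (it is in fact an if-and-only-if for graphs with an edge), and the verification is pure bookkeeping on existing lemmas rather than a fresh geometric construction. One small point in your favour: your treatment of $\alpha=0$ (three cyclically consecutive net points forming a triangle) handles that endpoint cleanly, whereas the paper's construction needs the chosen $x'$ near $y'$ to be distinct from $x_0,x_1$, which is slightly delicate when $\alpha=0$ since $y'=y$ lies between $x_0$ and $x_1$.
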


\begin{proof}
	By \cref{lemma:eps_net_regime}, a.a.s. $X=V(\distgraph[n,\epsilon][\alpha][1])$ is an $(\epsilon/4)$-net. Suppose this is the case. Denote $K=\Ne{\distgraph[X,\epsilon][\alpha][1]}$.
	
	For any given vertex $x_0\in V(K)=X$, there exists a unique closest vertex $x_1$ to the \textit{right} of $x_0$ (i.e. the clockwise arc $[x_0,x_1]$ intersects $X$ only on $x_0$ and $x_1$). Note that if we prove $\{x_0,x_1\}$ is an edge in $K$, we automatically get that $K$ must be connected. To do this, consider $y$ the midpoint of the clockwise arc $[x_0,x_1]$ and let $y'\in\S^1$ be a point such that $\dist{y,y'}=\alpha$. Since $X$ is an $(\epsilon/4)$-net, we have $\dist{x_0,y}=\dist{x_1,y}\leq \epsilon/4$ and there exists another vertex $x'\in X$ with $\dist{x',y'}\leq\epsilon/4$. Then, for $i=0,1$ we have	\begin{align*}
		\dist{x_i,x'}&\leq \dist{x_i,y}+\dist{y,y'}+\dist{y',x'}\leq \frac{\epsilon}{4} +\alpha +\frac{\epsilon}{4}\leq\alpha+\epsilon\ \text{and}\\
		\dist{x_i,x'}&\geq \dist{y,y'}-\dist{x_i,y}-\dist{y',x'}\geq\alpha-\frac{\epsilon}{4}-\frac{\epsilon}{4}\geq\alpha-\epsilon.
	\end{align*}
	So $\{x_0,x_1\}\subset N(x')$, so $\{x_0,x_1\}\in K$ as needed. 
\end{proof}

Finally, we summarize all our results for the 1-dimensional random $\epsilon$-distance graphs in the following theorem. 

\begin{theorem}\label{thm:1_dim_eps_dist_graphs}
	If $\epsilon\geq\C_1(\log n/n)$, and $\epsilon\to 0$, then a.a.s. the following hold:
	\begin{enumerate}
		\item If $0<\alpha\leq\pi$, then $\conn{\Ne{\distgraph[n,\epsilon][\alpha][1]}}=0.$
		\item If $0<\alpha\leq\pi$, $\alpha\neq\frac{2\pi}{3}$, then $\chi(\distgraph[n,\epsilon][\alpha][1])=3$.
		\item If $\alpha=\frac{2\pi}{3}$, then $\chi\left(\distgraph[n,\epsilon][2\pi/3][1]\right)=4$. 
	\end{enumerate}
\end{theorem}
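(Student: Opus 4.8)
The plan is to read the theorem off the results already established in this section; the only piece not literally available beforehand is the bound $\chi\le 4$ for $\alpha=\frac{2\pi}{3}$, which is an elementary coloring argument. Throughout, the standing assumption $\epsilon\ge\C_1(\log n/n)$, $\epsilon\to0$ is verbatim the hypothesis of \cref{coro:chi_dim1_equal_3,coro:clique_bound_eps_distance_graphs,thm:conn_neigh_dim1,thm:best_conn_neigh_eps_distance_graphs}, so I may invoke them without further comment. Part (2) is then immediate: it is exactly the conclusion of \cref{coro:chi_dim1_equal_3}.

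For part (3) I would prove the two bounds separately. The lower bound $4\le\chi(\distgraph[n,\epsilon][2\pi/3][1])$ holds a.a.s. by \cref{coro:clique_bound_eps_distance_graphs}(1) with $d=1$, since $\ell_1=\frac{2\pi}{3}$ is the common side length of the equilateral triangle inscribed in $\S^1$. For the upper bound, I claim the ambient $\epsilon$-distance graph $\distgraph[\epsilon][2\pi/3][1]$ is $4$-colorable once $\epsilon<\pi/6$: partition $\S^1$ into four consecutive half-open arcs $A_0,A_1,A_2,A_3$ of length $\pi/2$ and give $A_i$ the color $i$. Each arc has $\diam{A_i}<\pi/2<\frac{2\pi}{3}-\epsilon=\alpha-\epsilon$, so no edge of $\distgraph[\epsilon][2\pi/3][1]$ can have both endpoints in the same arc, and distinct arcs carry distinct colors, so the coloring is proper. (These four arcs also realize the hypotheses of \cref{lemma:rel_distance_and_eps_distance_graphs}(2), being a proper $4$-coloring of $\distgraph[0][2\pi/3][1]$ with each color class a single cell of diameter $<\alpha$ and with vacuous separation condition.) Since $\distgraph[n,\epsilon][2\pi/3][1]$ is an induced subgraph of $\distgraph[\epsilon][2\pi/3][1]$ and $\epsilon\to0$, we get a.a.s. $\chi(\distgraph[n,\epsilon][2\pi/3][1])\le4$, hence $=4$.

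For part (1), the inequality $\conn\ge0$, i.e. that $\Ne{\distgraph[n,\epsilon][\alpha][1]}$ is (nonempty and) connected, holds a.a.s. for every $0\le\alpha\le\pi$ by \cref{thm:conn_neigh_dim1}; it remains to show $\conn\le0$. If $\alpha\ne\pi/2$ this is exactly what \cref{thm:best_conn_neigh_eps_distance_graphs} yields for $d=1$, namely a.a.s. $\conn{\Ne{\distgraph[n,\epsilon][\alpha][1]}}\le d-1=0$. The value $\alpha=\pi/2$ is excluded from \cref{thm:best_conn_neigh_eps_distance_graphs}, so for it I fall back on part (2): since $\pi/2\ne\frac{2\pi}{3}$, a.a.s. $\chi(\distgraph[n,\epsilon][\pi/2][1])=3$, and then L\'ovasz's bound (\cref{thm:conn-neigh-Lovasz}) gives $\conn{\Ne{\distgraph[n,\epsilon][\pi/2][1]}}\le\chi-3=0$. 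In every case $\conn=0$ a.a.s.

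I do not anticipate a real obstacle, since all the machinery is in place. The two points that need attention are: (a) in part (3), pinning down the threshold on $\epsilon$ (here $\epsilon<\pi/6$) so that the four length-$\pi/2$ arcs have diameter strictly below $\alpha-\epsilon$, which makes the coloring proper; and (b) in part (1), the excluded value $\alpha=\pi/2$, which prevents a uniform appeal to \cref{thm:best_conn_neigh_eps_distance_graphs} and forces the short detour through part (2) and L\'ovasz's theorem.
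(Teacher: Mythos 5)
Your proof is correct and follows the same skeleton as the paper's: both assemble the theorem from \cref{thm:conn_neigh_dim1}, \cref{thm:best_conn_neigh_eps_distance_graphs}, \cref{coro:chi_dim1_equal_3}, the clique lower bound, and the four-arc (inscribed square) coloring for $\alpha=\tfrac{2\pi}{3}$. There is, however, one place where you are actually more careful than the paper. In part (1) the paper invokes \cref{thm:best_conn_neigh_eps_distance_graphs} uniformly for all $0<\alpha\leq\pi$ to get $\conn{\Ne{\distgraph[n,\epsilon][\alpha][1]}}\leq0$, but that theorem is stated only for $\alpha\neq\pi/2$, so as written the paper's argument does not literally cover $\alpha=\pi/2$. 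You noticed this and patched it cleanly: since $\pi/2\neq\tfrac{2\pi}{3}$, part (2) gives $\chi=3$ a.a.s., and then L\'ovasz's inequality (\cref{thm:conn-neigh-Lovasz}) forces $\conn{\Ne{\distgraph[n,\epsilon][\pi/2][1]}}\leq\chi-3=0$. This is a genuine (if minor) improvement over the written proof; the rest of your argument, including the explicit $\epsilon<\pi/6$ threshold for the four-arc coloring and the alternative justification via \cref{lemma:rel_distance_and_eps_distance_graphs}(2), matches the paper's intent exactly.
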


\begin{proof}
	We may assume $X=V(\distgraph[n,\epsilon][\alpha][1])$ is already an $(\epsilon/4)$-net. 
	\begin{enumerate}
		\item \cref{thm:conn_neigh_dim1} implies $\conn{\Ne{\distgraph[n,\epsilon][\alpha][1]}}\geq 0$, while \cref{thm:best_conn_neigh_eps_distance_graphs} implies $\conn{\Ne{\distgraph[n,\epsilon][\alpha][1]}}\leq 0$. 
		\item This is \cref{coro:chi_dim1_equal_3}.
		\item Since $\alpha=\frac{2\pi}{3}=\ell_1$ is the distance between vertices of the inscribed equilateral triangle, \cref{lemma:clique_lower_bound} gives $4\leq \chi\left(\distgraph[n,\epsilon][2\pi/3][1]\right)$. By coloring the arcs between the vertices of an inscribed square with 4 different colors, we get a proper 4-coloring, so the result follows. \qedhere
	\end{enumerate}
\end{proof}

Let us point out at an interesting insight from this result. As we mentioned in the introduction, our interest in $\epsilon$-distance graphs on spheres is as a generalization to the random Borsuk graphs studied in \cite{Kahle-Martinez2020}. A different generalization of Borsuk graphs arises when we consider the antipodal map on $\S^d$ as free action of the group $\Z_2$, giving the $G$-Borsuk graphs $\Ggraph{G}{X,\epsilon}$ studied in \cite{Martinez2021}. For those, topological tools seem to provide efficient lower bounds to the chromatic number, by computing the $G$-index of Hom-complexes, which are a generalization to the neighborhood complex introduced by Babson--Kozlov \cite{BabsonKozlov2003,BabsonKozlov2006}. 
Note that when $\alpha=\frac{2\pi}{3}$ the graph  $\distgraph[\epsilon][2\pi/3][1]$ is also the $\Z_3$-Borsuk graph $\Ggraph{\Z_3}{\S^1,\epsilon}$, where $\Z_3=\{\1,\nu,\nu^2\}$ acts on $\S^1$ via rotations of an angle $2\pi/3$. Thus, part (1) of \cref{thm:1_dim_eps_dist_graphs} shows that the connectivity of the neighborhood complex is not a tight lower bound for this $\Z_3$-Borsuk graph. This suggests that the neighborhood complex might not be sufficient to lower bound the chromatic number of other $G$-Borsuk graphs, and so the approach taken in \cite[Section~5]{Martinez2021} was necessary. 

\section[Case $d=2$]{Case $\bm{d=2}$}
	
\begin{theorem}\label{thm:lower_conn_dim2}
If $0<\alpha\leq\pi$, $\epsilon\geq\C_2(\log n/n)^{1/2}$ and $\epsilon\to0$. Then, a.a.s. $\conn{\Ne{\distgraph[n,\epsilon][\alpha][2]}}\geq1$. If moreover $\alpha\neq\pi/2$, then a.a.s. $$\conn{\Ne{\distgraph[n,\epsilon][\alpha][2]}}=1.$$ 
\end{theorem}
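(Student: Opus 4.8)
The plan is to match the $d=1$ argument: the upper bound $\conn{\Ne{\distgraph[n,\epsilon][\alpha][2]}}\le 1$ is immediate from \cref{thm:best_conn_neigh_eps_distance_graphs} whenever $\alpha\ne\pi/2$ (it gives an injection $\Z\hookrightarrow H_2(\Ne{\cdot})$, hence the complex is not $2$-connected), so the entire content is the lower bound $\conn{\Ne{\distgraph[n,\epsilon][\alpha][2]}}\ge 1$, valid for all $0<\alpha\le\pi$. By \cref{lemma:eps_net_regime} I will assume throughout that $X=V(\distgraph[n,\epsilon][\alpha][2])$ is an $(\epsilon/4)$-net of $\S^2$, and I will set $K=\Ne{\distgraph[X,\epsilon][\alpha][2]}$. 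Since $K$ is connected by the same one-dimensional-skeleton argument as in \cref{thm:conn_neigh_dim1} (given any two vertices, pick a midpoint $y$ of a short arc between nearby net points, pick $y'$ at distance $\alpha$ from $y$, and a net point $x'$ near $y'$; then $x'$ is a common neighbor, so short-distance pairs are edges of $K$ and $K^{(1)}$ is connected), it remains to show $K$ is simply connected, i.e. $\pi_1(K)=0$.

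The key structural input is \cref{thm:delaunay_subcomplex_neighbor}: a.a.s. $\Del{X}\subset K$ as a subcomplex, and by \cref{thm:delaunay_homeomorphic_sphere} we have $\Del{X}\approx\S^2$, which is simply connected. So any loop in $K$ is homotopic to a loop in the $1$-skeleton $K^{(1)}$, and it suffices to show every such loop bounds a disk in $K$. The strategy is to push $1$-cells of $K$ into $\Del{X}$: I claim that for every edge $\{a,b\}$ of $K$, there is a path in the $1$-skeleton of $\Del{X}$ from $a$ to $b$ that together with $\{a,b\}$ bounds a $2$-chain in $K$ — indeed, both $a$ and $b$ lie in a common neighborhood $N(w)$, which is a simplex of $K$; within that simplex the edge $\{a,b\}$ is null-homotopic rel endpoints to any path through $w$, and then I connect $w$ to $a$ and $b$ along edges of $K$, so $\{a,b\}$ is homotopic (in $K$) to a concatenation of edges each of which shares a $2$-simplex with a Delaunay edge. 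More cleanly: every edge $\{a,b\}\in K$ lies in a $2$-simplex of $K$ (take $w\in N(a)\cap N(b)$; then $\{a,b,w\}$ has a common neighbor? — need to check, but at worst $\{a,w\}$ and $\{b,w\}$ are edges, and by the net property one can insert common neighbors to triangulate), and using such $2$-simplices one writes a homotopy of any $K^{(1)}$-loop to a $\Del{X}^{(1)}$-loop, which then bounds since $\Del{X}\simeq\S^2$. Applying the Hurewicz/Seifert–van Kampen machinery, or simply the fact that a connected $2$-complex containing a simply connected subcomplex onto which it deformation-retracts its $1$-skeleton is simply connected, yields $\pi_1(K)=0$.

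A cleaner route I would actually pursue: use that $\Del{X}$ is a full-dimensional subcomplex of $K$ homeomorphic to $\S^2$, combined with the geometric maps of \cref{thm:best_conn_neigh_eps_distance_graphs}. There we built $\Phi:K\to\R^3\setminus\{0\}$ with $\Phi|_{\Del{X}}$ a homeomorphism onto $\S^2$ and $\Psi\circ\Phi|_{\Del{X}}$ the identity on $\S^2$; since $\R^3\setminus\{0\}\simeq\S^2$ has $\pi_1=0$, the composite $\Psi\circ\Phi:K\to\S^2$ restricted to $\Del{X}$ is a homotopy equivalence, so on $\pi_1$ the inclusion $\Del{X}\hookrightarrow K$ admits a retraction $\pi_1(K)\to\pi_1(\Del{X})=0$; this shows $\pi_1(\Del{X})\to\pi_1(K)$ is injective (trivially true) but I need surjectivity, i.e. that every loop in $K$ is homotopic into $\Del{X}$. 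This is where the main work lies, and I would obtain it by a simplicial-approximation argument: represent a loop by a simplicial map $\gamma:S\to K$ from a triangulated circle; each edge of $S$ maps into some facet $N(w)$ of $K$; cone it off through the vertex $v_w$, producing a homotopy to a loop whose edges all emanate from neighborhood-centers, and iterate / use that the nerve-type structure forces these to be expressible via $\Del{X}$-edges.

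The main obstacle is exactly this surjectivity on $\pi_1$: showing that no loop in the neighborhood complex $K$ ``escapes'' the Delaunay subcomplex. The facets $N(w)$ of $K$ can be large and overlap in complicated ways, so I cannot simply collapse $K$ onto $\Del{X}$; the argument must use the $(\epsilon/4)$-net geometry quantitatively (as in \cref{lemma:diam_simplex_delaunay} and the two-case analysis of \cref{thm:delaunay_subcomplex_neighbor}) to locally compare neighborhoods with Delaunay stars. I expect the proof to proceed by fixing a loop, subdividing finely, and showing each short sub-arc lies within the closed star of a single Delaunay vertex in $K$, so that a standard nerve/star argument contracts it into $\Del{X}$; combined with connectedness ($0$-connectivity above) and $\pi_1(\Del{X})=\pi_1(\S^2)=0$, Hurewicz then upgrades this to $\conn{K}\ge 1$, and \cref{thm:best_conn_neigh_eps_distance_graphs} pins it to exactly $1$ when $\alpha\ne\pi/2$.
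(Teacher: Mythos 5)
Your high-level plan — get the lower bound by showing $\pi_1(\Ne{\cdot})=0$ via the subcomplex $\Del{X}\approx\S^2$ from \cref{thm:delaunay_subcomphor,thm:delaunay_subcomplex_neighbor}, and pin down the upper bound via \cref{thm:best_conn_neigh_eps_distance_graphs} — matches the paper. But the step you flag as ``where the main work lies'' is exactly where your proposal has a gap, and none of the three routes you sketch would close it.

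The mechanism you try (``the edge $\{a,b\}$ is null-homotopic rel endpoints to any path through $w$, and then I connect $w$ to $a$ and $b$ along edges of $K$'') fails because $\{a,w\}$ need not be an edge of $\Ne{G}$ at all: for instance when $\alpha=\pi$, $N(w)$ is a tiny cap around $-w$ and $N(a)$ a tiny cap around $-a$, which are nearly antipodal, so $N(a)\cap N(w)=\emptyset$ and there is no edge $\{a,w\}$. You notice something is off (``need to check''), but the fix is not to triangulate through $w$. Your ``cleaner route'' via the retraction $\Psi\circ\Phi$ also does not help, because, as you observe, a retraction on $\pi_1$ onto a trivial group yields nothing; surjectivity of $\pi_1(\Del{X})\to\pi_1(K)$ is the whole problem. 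And your final sketch (subdivide so each sub-arc lies in the closed star of a Delaunay vertex) is not clearly achievable, since facets of $K$ have the form $N(w)$, which is an entire annulus worth of vertices — not localized near any one Delaunay star.

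What the paper actually does, and what you are missing, is a concrete geometric claim. For each $w$, consider the annulus $Z(w)=\{x\in\S^2:\alpha-\epsilon\le\dist{x,w}\le\alpha+\epsilon\}$ and the subcomplex $K(w)\subset\Del{X}$ of Delaunay simplices with all vertices in $Z(w)$. The crucial claim is that the $1$-skeleton of $K(w)$ is \emph{connected}: its proof uses the height function along the axis through $w$ and the fact (\cref{lemma:diam_simplex_delaunay}) that Delaunay edges have geodesic length $\le\epsilon/2$, so one can ``walk downhill'' within the annulus. Since $X\cap Z(w)=N(w)$ and $N(w)$ is a simplex of $K$, any two $a,b\in N(w)$ are joined by a Delaunay path $a=x_1,\dots,x_m=b$ with all $x_i\in N(w)$. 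Then, and this is the point you are missing, each consecutive triangle $\{x_1,x_k,x_{k+1}\}$ is a face of the single simplex $N(w)\in K$, so an easy induction homotopes $\{a,b\}$ rel endpoints onto the Delaunay path inside $K$. Replacing every edge of a simplicial loop this way pushes the loop into $\Del{X}^{(1)}\subset K$, where it bounds because $\Del{X}\approx\S^2$. Without the $K(w)$-connectedness claim and the observation that the whole homotopy takes place inside the simplex $N(w)$, the argument does not go through; your proposal, as written, asserts rather than proves the key homotopy.
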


\begin{proof}
	By \cref{lemma:eps_net_regime}, we may suppose $X=V\left(\distgraph[n,\epsilon][\alpha][2]\right)$ is an $(\epsilon/4)$-net of $\S^d$. By \cref{thm:delaunay_homeomorphic_sphere}, we may also assume $\Del{X}\approx\S^d$, and that both the geodesic and Euclidean Delaunay Triangulations coincide. Denote $G=\distgraph[X,\epsilon][\alpha][2]$. To make our exposition cleaner, we will prove the theorem by stating and using several claims that we prove later. 
	
	Let $w\in X$ be any vertex. Define the set $$Z(w)=\{x\in\S^2: \alpha-\epsilon\leq\dist{x,w}\leq\alpha+\epsilon\}.$$ Let $K(w)\subset\Del{X}$ be the subcomplex given by all the simplices $\sigma\in\Del{X}$ such that all its vertices are contained in $Z(w)$, i.e. such that $V(\sigma)\subset\Z(w)$. We will have the claim.
	\begin{claim}[1]
		The 1-skeleton of $K(w)$ is connected. 
	\end{claim}

	Let now $\{x,y\}\in\Ne{G}$ be any edge in the neighborhood complex. Thus, there must exist some vertex $w\in X$ such that $\{x,y\}\subset N(w)$. Since $N(w)\subset Z(w)$, we must have $x,y\in K(w)$. By the Claim 1, there must be a path $x=x_1, x_2, \dots, x_m=y$, such that $\{x_i,x_{i+1}\}\in K(w)$. This in particular means that all $x_i\in N(w)$, and also that all $\{x_i,x_{i+1}\}\in\Del{X}$. Thus, we will have the following claim. 
	
	\begin{claim}[2]
		The edge $\{x,y\}$ is homotopic to the 1-chain		
\begin{center}
			$\{x_1, x_2\}+\{x_2,x_3\}+\dots+\{x_{m-1},y\}$ in $\Ne{G}$.
\end{center} 
	\end{claim}

	Finally, take any loop $\gamma$ in $\Ne{G}$. Since it must be homotopic to a simplicial loop (by the Simplicial Approximation Theorem \cite[Thm.~2C.1]{Hatcher2002}), we may assume $\gamma=\{y_0,y_1\}+\dots+\{y_{t-1},y_0\}$, where each edge $\{y_i, y_{i+1}\}\in\Ne{G}$.  By Claim 2, each edge $\{y_i, y_{i+1}\}$ is homotopic to a path on $\Del{X}\subset\Ne{G}$. Thus, $\gamma$ itself is homotopic to a loop on $\Del{X}$, which is homeomorphic to $\S^2$. Since $\S^2$ is simply-connected, all loops on $\Del{X}$ must be null-homotopic on $\Del{X}$. And since $\Del{X}\subset\Ne{G}$, $\gamma$ must be null-homotopic on $\Ne{G}$. Therefore, $\Ne{G}$ is simply connected, and thus $\conn{\Ne{G}}\geq 1$. When $\alpha\neq\pi/2$, by \cref{thm:best_conn_neigh_eps_distance_graphs} we get $\conn{\Ne{G}}=1$.
\end{proof}

\subsection{Proving the claims}\label{sec:proof_thm_lower_conn_dim2}
We now prove the claims in the proof to \cref{thm:lower_conn_dim2}.  

\begin{proof}[Proof of Claim (1)]
	For a cleaner proof, we may assume $w=N_0=(0,0,1)$ is the north pole of the sphere. Thus, the set $Z(w)$ is the section of $\S^2$ constrained by the two horizontal planes $\Pi_{-}: z=h^{-}$ and $\Pi_{+}: z=h^{+}$, for heights $h^{\pm}$ such that $\dist[\S^2]{N_0,\Pi_{\pm}}=\alpha\pm\epsilon$. Similarly, let $\Pi_0$ be the plane at height $h_0$,  exactly at distance $\alpha$ from $N_0$. We will simply write $Z=Z(N_0)$ and $K=K(N_0)$. Let $\Psi$ be the homeomorphism from \cref{lemma:ambien_homeo_delaunay}. Note that the restriction $\Psi:K\to\Psi(K)$ is a homeomorphism that maps triangles in $K$ to \textit{geodesic} triangles in $\S^2$. We proceed by distinguishing two cases. 
	\begin{enumerate}
		\item (\textbf{Case } $0<\alpha<\pi$):
		
		Let $\Gamma=H_0\cap\S^2$, so $\Gamma$ is a circle and $Z$ is a small strip of width $2\epsilon$ around $\Gamma$. Note first that $\Gamma\subset\Psi(K)$. Indeed, if $x\in\Gamma$, because the geodesic $\Del{X}$ covers all $\S^2$, there must be a simplex $\sigma\in\Del{X}$ such that $x$ is in the gedesic triangle of $\sigma$. In particular, if $\sigma=[v_1,v_2,v_3]$, as in the proof of \cref{lemma:diam_simplex_delaunay}, we get $\diam{\sigma}\leq\epsilon/2$. So $\dist{x,v_i}\leq\epsilon/2$, and so $v_i\in Z$, so the simplex $\sigma$ is in $K$, and $x\in\Psi(\sigma)\subset\Psi(K)$. 
		
		Now, let $v\in X\cap Z$ be any vertex in $Z$ at a height larger than $h_0$. It is clear that the star of $v$ on the geodesic $\Del{X}$ must be homeomorphic to a closed disk, and in particular, there must be some vertex $v_2$ such that $\{v,v_2\}\in\Del{X}$, such that $v_2$ is at a strictly lower height than $v$. By \cref{lemma:diam_simplex_delaunay}, $\dist{v,v_2}\leq\epsilon/2$, and being at a lower height, we must have $v_2\in Z$. If the height of $v_2$ is lower than $h_0$, we stop, otherwise we continue this process to produce the vertices $v_3, v_4, \dots, v_r$. Each $v_i$ is connected to $v_{i-1}$ in $\Del{X}$, at an strictly lower height than the previous, and all lie inside of $Z$. Moreover, since we stopped, the height of $v_r$ is less than $h_0$. Because $X$ is finite and it covers all of $\S^2$, this process always ends for a finite $r$. Then, the path $v,v_2, v_3,\cdots, v_r$ is contained in $\Psi(K)$ and crosses $\Gamma$. Hence, there is a path from $v$ to $\Gamma$ contained in $\Psi(K)$. 
		
		A similar argument can be applied for all vertices $v\in X\cap Z$ at a lower height than $h_0$. Therefore, for any $u, v\in X\cap Z$, there is a path connecting them contained in $\Psi(K)$. 
		
		\item(\textbf{Case } $\alpha=\pi$):
		
		Let $S_0$ be the south pole, so $Z$ is a circular cap of radius $\epsilon$ around $S_0$.  There must exist a simplex $\sigma\in\Del{X}$ such that $x\in\Psi(\sigma)$, say $\sigma=\{v_1, v_2, v_3\}$. By \cref{lemma:diam_simplex_delaunay}, $\dist{S_0, v_i}\leq\epsilon/2$, so in particular $v_i\in Z$. Thus $\sigma\in K$, and we get $S_0\in\Psi(K)$. 
		
		Let $v\in X\cap Z$ be any vertex and suppose $v\neq S_0$. So $v$ has a positive height. Because the star of $v$ in the geodesic $\Del{X}$ must be a closed disk, there must be a vertex $v_2\in X$ at a lower height than $v$ connected to it, unless $v$ is one of the vertices with the lowest possible height. In the latter, that means $v\in\sigma$ such that $S_0\in\Psi(\sigma)$, so in particular, there is a path from $v$ to $S_0$ contained in $\Psi(\sigma)\subset\Psi(K)$ (since $\diam{\sigma}\leq\epsilon/2)$. Otherwise, $v$ is not at the lowest possible height, then it is connected in $\Del{X}$ to some $v_2$ at a lower height. Clearly $v_2\in Z$, so $\{v,v_2\}\in K$. We may continue this process until we get $v_2, \cdots, v_r$, and as before, all $v_i\in Z$, and there is a path between $v_r$ and $S_0$ contained in $\Psi(K)$. Since there is also a path from $v$ to $v_r$ contained in $\Psi(K)$, then $v$ is connected all the way to $S_0$ in $\Psi(K)$. Since this can be done for any $v\in X\cap Z$, all vertices in $X\cap Z$ are connected by paths contained in $\Psi(K)$. 
	\end{enumerate}
Since $\Psi(K)$ is homeomorphic to $K$, we get that all vertices of $K$ are connected to each other by paths in $K$, and since it is a simplicial complex, it means that it is connected. In particular, its 1-skeleton $K^{(1)}$ is connected. 	
\end{proof}

\begin{proof}[Proof of Claim (2)]
	We inductively prove that $\{x_1,x_k\}$ is homotopic to $\{x_1,x_2\}+\dots+\{x_{k-1},x_{k}\}$. The result is obvious for $k=2$. Suppose it holds for some $2\leq k\leq m-1$. Since all the $x_i\in N(w)$, in particular the triangle $\{x_1, x_k, x_{k+1}\}\subset N(w)$, so $\sigma=[x_1,x_k,x_{k+1}]\in K$, and we get an homotopy $\{x_1,x_{k+1}\}=\{x_1,x_{k}\}+\{x_{k},x_{k+1}\}$ (see \cref{fig:homotopy_triangle}). Therefore, using the induction hypothesis, we get the homotopies:
	$$\{x_1, x_{k+1}\}=\{x_1,x_k\}+\{x_k,x_{k+1}\}=\{x_1,x_2\}+\cdots+\{x_{k-1},x_k\}+\{x_{k},x_{k+1}\}.$$
	Then, taking $k=m$ we get the claim. 
\end{proof}

\begin{figure}
	\centering
	\includegraphics[height=7em]{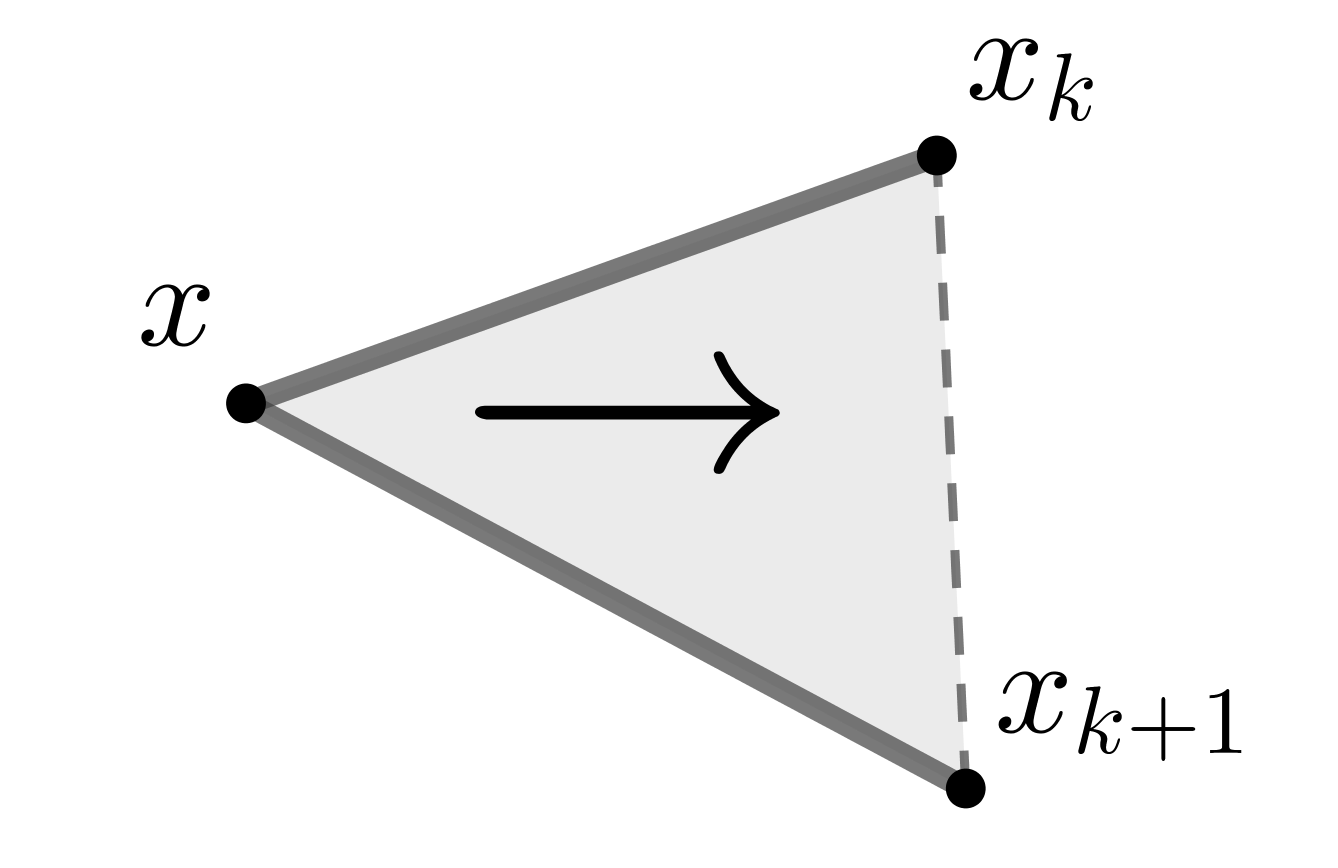}
	\caption{Homotopy between the edges of a triangle.}\label{fig:homotopy_triangle}
\end{figure}

\subsection{Bounds}

We finish this chapter by compiling some results for the upper bound of $\distgraph[0][\alpha][2]$ and comparing them with the lower bounds found, to get intervals for the chromatic number of $\distgraph[n,\epsilon][\alpha][2]$. 

Our best lower bounds come from \cref{thm:lower_conn_dim2} (combined with \cref{thm:conn-neigh-Lovasz}) and \cref{coro:clique_bound_eps_distance_graphs}. Namely, when  $\epsilon\geq\C_2(\log n/n)^{1/2}$, and $\epsilon\to 0$, a.a.s. we get:
\begin{enumerate}
	\item If $0<\alpha\leq \pi$, then $4\leq\chi\left(\distgraph[n,\epsilon][\alpha][2]\right)$. 
	\item If $\alpha=\ell_2$, then $5\leq\chi\left(\distgraph[n,\epsilon][\ell_2][2]\right).$
\end{enumerate}
Where $\ell_2=2\arcsin\left(\frac{\sqrt{6}}{2}\right)\approx1.91$ is the arclength between the vertices of the regular inscribed tetrahedron. 

We now discuss some upper bounds. Let $\lambda_2$ be the diameter of the projection of the faces of the regular tetrahedron onto $\S^2$, so $$\lambda_2=2\arcsin{\sqrt{\frac{1}{2}+\frac{1}{\sqrt{12}}}}\approx2.19.$$
Thus, if we color each of the 4 faces of the tetrahedron with a different color, and project them onto the sphere $\S^2$, we produce a 4-coloring for $\distgraph[\epsilon][\alpha][2]$ for any $\alpha>\lambda_2$, provided $\epsilon$ is small enough (see \cref{fig:S2_4colors}).

For other values of $\alpha$, we can produce upper bounds as we describe below. While these bounds are widely known, we cite \cite{Malen2015} as a reference. 
\begin{enumerate}
	\item If $\alpha>\frac{2\pi}{3}\approx 2.09$, we split $\S^2$ into a circular cap of diameter $2\pi/3$ and 4 triangular stripes, each of diameter $\leq 2\pi/3$ (see \cref{fig:S2_5colors}).
	\item If $\ell^*<\alpha\leq \frac{2\pi}{3}$, where $\ell^*$ is the geodesic diagonal of a face of an inscribed regular dodecahedron, so $\ell^*=2\arcsin\left(\frac{\sqrt{3}}{3}\right)\approx1.23$. Assigning the same color to opposite faces of the regular dodecahedron, we produce a proper 6-coloring, provided $\epsilon$ is small (see \cref{fig:S2_6colors})
\end{enumerate}

\begin{figure}[!htb]
	\captionsetup[subfigure]{justification=centering}
	\centering
	\begin{subfigure}[b]{0.3\textwidth}
		\centering
		\includegraphics[width=\textwidth]{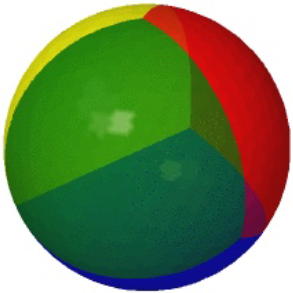}
		\caption{Projecting a tetrahedron (4 colors)\\ \phantom{a} \\ \phantom{a}}\label{fig:S2_4colors}
	\end{subfigure}
	\hfill
	\begin{subfigure}[b]{0.3\textwidth}
		\centering
		\includegraphics[width=\textwidth]{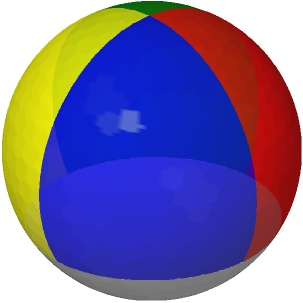}
		\caption{A circular cap and four triangles (5 colors)\\ 
		\phantom{a} \\ \phantom{a}}\label{fig:S2_5colors}
	\end{subfigure}
	\hfill
	\begin{subfigure}[b]{0.3\textwidth}
		\centering
		\includegraphics[width=\textwidth]{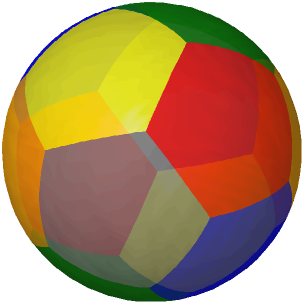}
		\caption{Projecting a dodecahedron, opposite faces given the same color (6 colors)}\label{fig:S2_6colors}
	\end{subfigure}
	\caption{Upper Bounds for $\distgraph[\epsilon][\alpha][2]$ by tessellating the sphere.}
\end{figure}

For smaller values of $\alpha$, other upper bounds can be produced by considering other inscribed polytopes, but we skip this to provide a general bound. In \cite{Coulson2002}, Coulson produces a tessellation of the space $\R^3$ with polytopes, in such a way that it produces proper 15-colorings for $\gendistgraph[\R^3,d]$ for all values of $d$ in a small open interval $(d_0,d_1)$. By suitably re-scaling this coloring, and restricting it to $\S^2$, we can get a proper 15-coloring, for any value $0<\alpha\leq\pi$, provided $\epsilon$ is small. 

\cref{table:bounds_chi_eps_dist_graphs_s2} summarizes the discussed upper and lower bounds for $\distgraph[n,\epsilon][\alpha][2]$, a.a.s. provided $\epsilon\geq\C_2(\log n/n)^{1/2}$ and $\epsilon\to 0$. Note the last row of the table gives the following theorem, where the lower bound was efficiently obtained using the connectivity of the neighborhood complex. 

\begin{theorem}\label{thm:conn_close_to_pi}
If  $\lambda_2<\alpha\leq\pi$, $\epsilon\geq\C_2(\log_n/n)^{1/2}$ and $\epsilon\to0$. Then, a.a.s. $$\chi\left(\distgraph[n,\epsilon][\alpha][2]\right)=4.$$
\end{theorem}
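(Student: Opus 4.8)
The plan is to prove the two matching bounds
\[
4\le\chi\left(\distgraph[n,\epsilon][\alpha][2]\right)\qquad\text{and}\qquad \chi\left(\distgraph[n,\epsilon][\alpha][2]\right)\le 4
\]
separately; together they give the asserted equality. The lower bound is where the topological invariant does its work and holds only a.a.s., whereas the upper bound is a deterministic coloring argument valid as soon as $\epsilon$ is small.

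For the lower bound, observe that $\lambda_2<\alpha\le\pi$ in particular satisfies $0<\alpha\le\pi$, so \cref{thm:lower_conn_dim2} applies and gives that a.a.s. $\conn{\Ne{\distgraph[n,\epsilon][\alpha][2]}}\ge1$. Substituting this into L\'ovasz's inequality \cref{thm:conn-neigh-Lovasz} yields, a.a.s.,
\[
\chi\left(\distgraph[n,\epsilon][\alpha][2]\right)\ \ge\ \conn{\Ne{\distgraph[n,\epsilon][\alpha][2]}}+3\ \ge\ 4.
\]
Note the clique bound of \cref{coro:clique_bound_eps_distance_graphs} is unavailable in this range of $\alpha$ (the distance graph $\distgraph[0][\alpha][2]$ carries no $4$-clique once $\alpha>\lambda_2$), so here the value $4$ genuinely rests on the connectivity of the neighborhood complex.

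For the upper bound, inscribe a regular tetrahedron in $\S^2$ and radially project its four triangular faces onto the sphere, obtaining four spherical regions $A_1,\dots,A_4$ that partition $\S^2$, each shared geodesic edge being assigned to exactly one of its two adjacent regions; color $\S^2$ by giving all of $A_i$ the color $i$ (the coloring pictured in \cref{fig:S2_4colors}). The geometric input is that each region $A_i$ has geodesic diameter exactly $\lambda_2=2\arcsin\sqrt{\frac{1}{2}+\frac{1}{\sqrt{12}}}$. I would verify this by showing that a diameter-realizing pair inside a projected face consists of a vertex $v$ of the spherical triangle together with the radial projection $\hat m$ of the midpoint $m$ of the opposite geodesic edge: for these one computes $\langle v,\hat m\rangle=-1/\sqrt{3}$, so $\dist[\S^2]{v,\hat m}=\arccos(-1/\sqrt{3})=\lambda_2$, which already exceeds the side length $\arccos(-1/3)$, and then arguing from the symmetry of the configuration that no other pair of points of the closed region lies farther apart. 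Since $\alpha>\lambda_2$ and $\epsilon\to0$, for $n$ large we have $\diam{A_i}=\lambda_2<\alpha-\epsilon$ for every $i$; as there are only four color classes, each a single region, part (2) of \cref{lemma:rel_distance_and_eps_distance_graphs} applies (its ``far apart'' hypothesis being vacuous in this case) and gives $\chi\left(\distgraph[\epsilon][\alpha][2]\right)\le 4$. Because $\distgraph[n,\epsilon][\alpha][2]$ is an induced subgraph of $\distgraph[\epsilon][\alpha][2]$, we conclude $\chi\left(\distgraph[n,\epsilon][\alpha][2]\right)\le 4$ for all sufficiently large $n$, which together with the a.a.s. lower bound above gives a.a.s. $\chi\left(\distgraph[n,\epsilon][\alpha][2]\right)=4$.

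The only genuinely technical step, and hence the main obstacle, is the identification of $\lambda_2$ as the geodesic diameter of a projected tetrahedron face: one must argue in the spherical rather than the chordal metric, locate the extremal pair correctly, and handle the points lying on the boundary between two adjacent faces so that the coloring is well defined (harmless, since for $\epsilon$ small no edge of $\distgraph[\epsilon][\alpha][2]$ joins two points at geodesic distance $\le\lambda_2$). Everything else is a straightforward assembly of \cref{thm:lower_conn_dim2}, \cref{thm:conn-neigh-Lovasz}, \cref{lemma:rel_distance_and_eps_distance_graphs} and \cref{lemma:eps_net_regime}.
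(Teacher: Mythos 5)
Your proof is correct and follows essentially the same route as the paper: the lower bound $4\le\chi$ comes from \cref{thm:lower_conn_dim2} combined with L\'ovasz's inequality \cref{thm:conn-neigh-Lovasz}, and the upper bound is the $4$-coloring by radially projecting the faces of an inscribed regular tetrahedron, fed through part~(2) of \cref{lemma:rel_distance_and_eps_distance_graphs}. One small imprecision in your aside: to obtain $\chi\ge4$ from \cref{lemma:clique_lower_bound} one needs a $3$-clique (not a $4$-clique), but $\distgraph[0][\alpha][2]$ already has none once $\alpha>2\pi/3$, and since $\lambda_2>2\pi/3$ your conclusion that the neighborhood-complex bound is genuinely doing the work here stands.
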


\begin{table}[!htp]
	\centering
	\renewcommand{\arraystretch}{1.5}
	\begin{tabular}{|rcl|c|c|}
		\hline
		&$\pmb{\alpha}$	&                                         & \textbf{Lower Bound} & \textbf{Upper Bound} \\ \hline\hline
		$0<$	&$\alpha$	& $\leq\ell^*\approx 1.23$                           & 4                    & 15                   \\ \hline
		$1.23\approx\ell^*<$ &	 $\alpha$ &$<\ell_2\approx 1.91$               & 4                    & 6                    \\ \hline
		 &$\alpha$			& $=\ell_2\approx1.91$                                 & 5                    & 6                    \\ \hline
		$1.91\approx\ell_2<$ &$\alpha$& $\leq\frac{2\pi}{3}\approx2.09$   & 4                    & 6                    \\ \hline
		$2.09\approx\frac{2\pi}{3}<$ & $\alpha$& $\leq\lambda_2\approx2.19$ & 4                    & 5                    \\ \hline
		$2.19\approx\lambda_2<$ & $\alpha$ & $\leq\pi$                       & 4                    & 4                    \\ \hline
	\end{tabular}
\caption[Bounds on the chromatic number of $\epsilon$-distance graphs of $\S^2$.]{A.a.s. bounds on the chromatic number of $\epsilon$-distance graphs of $\S^2$, provided $\epsilon\to0$ slowly enough.}\label{table:bounds_chi_eps_dist_graphs_s2}
\end{table}

\section{Further work}

\begin{enumerate}[leftmargin=*]
	\item \cref{thm:best_conn_neigh_eps_distance_graphs} asserts that the connectivity of the neighborhood complex can't be higher than $d-1$ for random $\epsilon$-distance graphs on spheres when $\alpha\neq\pi/2$, which in general is a poor lower bound for the chromatic number. We wonder about the case when $\alpha=\pi/2$. Since we don't yet have a conjecture, it would be interesting to illustrate this case by running computer simulations. We observe that this task is not trivial, since we need to choose an $\epsilon$ \textit{small enough} and get an $(\epsilon/4)$-net, so $n$ should be \textit{big} (at least in the order of $\sim10^3$), but then computing the connectivity of $\Ne{\distgraph[n,\epsilon][\alpha][2]}$ (or even homology) requires a lot of computation power and time.
	\item We conjecture that \cref{thm:conn_close_to_pi} can be generalized to higher dimensions. 
	\begin{conj}
	Given $d\geq1$, there exists a constant $\lambda_d<\pi$, such that if $\lambda_d\leq\alpha\leq\pi$, $\epsilon\geq\C_d(\log n/n)^{1/d}$, and $\epsilon\to0$, then a.a.s. $$\conn{\Ne{\distgraph[n,\epsilon]}}=d-1.$$
	And so, a.a.s.
	$$\chi\left(\distgraph[n,\epsilon]\right)=d+2.$$
	\end{conj}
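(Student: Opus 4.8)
The plan is to establish the conjecture by proving two matching bounds, exactly mirroring the $d=1,2$ cases. For the lower bound, $\conn{\Ne{\distgraph[n,\epsilon]}}\leq d-1$ is already given by \cref{thm:best_conn_neigh_eps_distance_graphs} (since $\alpha$ close to $\pi$ is certainly $\neq \pi/2$), and then $\chi\left(\distgraph[n,\epsilon]\right)\geq d+2$ follows either from \cref{thm:conn-neigh-Lovasz} applied to the reverse connectivity bound, or more directly from \cref{coro:clique_bound_eps_distance_graphs}(2) using an inscribed regular $d$-simplex (valid whenever $\alpha\leq\ell_{d-1}$; for $\alpha$ near $\pi$ one instead argues as in \cref{lemma:clique_lower_bound} with a slightly perturbed simplex, or notes $\ell_{d-1}<\pi$ so near-$\pi$ values need the $\ell_d$-clique argument instead — this is a point to be careful about). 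So the real content is: (i) a matching upper bound $\chi\left(\distgraph[n,\epsilon]\right)\leq d+2$ for $\alpha$ sufficiently close to $\pi$, and (ii) a matching lower bound $\conn{\Ne{\distgraph[n,\epsilon]}}\geq d-1$.

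For the upper bound, I would define $\lambda_d$ as the geodesic diameter of the radial projection onto $\S^d$ of a facet of the regular inscribed $(d+1)$-simplex (the $d=2$ value $\lambda_2\approx 2.19$ is the special case). Projecting the $d+2$ facets of the inscribed simplex from the origin partitions $\S^d$ into $d+2$ closed "spherical simplex" regions $A_1,\dots,A_{d+2}$, pairwise meeting only along lower-dimensional faces; assign color $i$ to (the half-open version of) $A_i$. By construction each $A_i$ has geodesic diameter $\lambda_d<\pi$, so if $\alpha>\lambda_d$ then for $\epsilon$ small enough $\diam{A_i}<\alpha-\epsilon$, and no two points in the same region are joined in $\distgraph[\epsilon]$. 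This gives $\chi(\distgraph[\epsilon])\leq d+2$, hence a.a.s. $\chi(\distgraph[n,\epsilon])\leq d+2$ as an induced subgraph; one must check $\lambda_d<\pi$ for all $d$ (true: the facet does not pass through a full hemisphere), and verify the partition/half-open bookkeeping so that boundary faces don't create monochromatic edges, which is the same argument already used implicitly in \cref{lemma:rel_distance_and_eps_distance_graphs}(2).

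For the lower bound on connectivity, I would try to generalize the $d=2$ argument of \cref{thm:lower_conn_dim2}: show $\Del{X}\subset\Ne{\distgraph[n,\epsilon]}$ (already \cref{thm:delaunay_subcomplex_neighbor}), that $\Del{X}\approx\S^d$ (\cref{thm:delaunay_homeomorphic_sphere}), and then argue that the inclusion $\Del{X}\hookrightarrow\Ne{\distgraph[n,\epsilon]}$ kills all homotopy in degrees $\leq d-1$, i.e. that every simplicial sphere $S^k\to\Ne{\distgraph[n,\epsilon]}$ with $k\leq d-1$ is homotopic into $\Del{X}$ and hence nullhomotopic. The engine in dimension $2$ was: for each vertex $w$, the subcomplex $K(w)\subset\Del{X}$ spanned by Delaunay simplices lying in the shell $Z(w)=\{x:\alpha-\epsilon\le\dist{x,w}\le\alpha+\epsilon\}$ has connected $1$-skeleton, so every edge of $\Ne{}$ can be "filled" by a path in $\Del{X}$. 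To push to general $d$ one needs $K(w)$ to be $(d-2)$-connected (so that $k$-spheres, $k\le d-1$, built from neighborhoods can be pushed into $\Del{X}$): for $\alpha$ close to $\pi$ the shell $Z(w)$ is a thin geodesic annulus around the sphere of radius $\alpha$ centered at $w$, which is homotopy equivalent to $\S^{d-1}$, hence $(d-2)$-connected, and one expects the Delaunay subcomplex $K(w)$ restricted to it to inherit this connectivity when the net is fine enough (an $\epsilon$-net nerve/Delaunay-restriction argument). This — proving $K(w)$ is $(d-2)$-connected and then running a Hurewicz/obstruction-style argument to conclude $\pi_k(\Ne{})\to\pi_k(\Ne{})$ factors trivially for $k\le d-1$ — is the main obstacle: the clean "every loop is homotopic into a simply connected $\Del{X}$" trick from $d=2$ does not literally generalize to higher homotopy groups without controlling the connectivity of all these local shells and how their fillings patch together, which is exactly why the statement is left as a conjecture.
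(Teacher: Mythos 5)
The statement you are addressing is not proved in the paper at all: it appears in the ``Further work'' section precisely as an open conjecture, so there is no proof of record to compare against. Your proposal is a reasonable roadmap, and two of its three pieces are essentially complete. The upper bound $\chi\left(\distgraph[n,\epsilon]\right)\leq d+2$ for $\alpha>\lambda_d$ does go through: the facet hyperplanes of the regular inscribed $(d+1)$-simplex lie at distance $1/(d+1)>0$ from the origin, so each radially projected facet sits inside an open cap of angular radius $\arccos\bigl(1/(d+1)\bigr)<\pi/2$ and hence has diameter $\lambda_d<\pi$; the half-open bookkeeping is exactly \cref{lemma:rel_distance_and_eps_distance_graphs}(2), and this is the direct generalization of the paper's $4$-coloring of $\S^2$ by a projected tetrahedron. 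Likewise $\conn{\Ne{\distgraph[n,\epsilon]}}\leq d-1$ is exactly \cref{thm:best_conn_neigh_eps_distance_graphs}.

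The genuine gap is the one you name yourself, and it is the entire content of the conjecture: proving $\conn{\Ne{\distgraph[n,\epsilon]}}\geq d-1$. Two things are missing. First, the local input: you assert that $K(w)$, the Delaunay subcomplex carried by the shell $Z(w)$, should be $(d-2)$-connected because $Z(w)$ retracts to $\S^{d-1}$; this is plausible but follows from nothing in the paper --- Claim 1 in the proof of \cref{thm:lower_conn_dim2} establishes only $0$-connectivity, by an explicit ``walk downhill in height'' argument with no obvious higher-dimensional analogue --- so it would require a genuine nerve-type or Delaunay-restriction theorem. Second, and more seriously, the global assembly: in $d=2$ it suffices to homotope each \emph{edge} of a loop into $\Del{X}$ one at a time, because the fillings are paths and $\Del{X}\approx\S^2$ is simply connected. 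For $k$-spheres with $2\leq k\leq d-1$ one must choose fillings for all faces of a triangulated $\S^k$ coherently, which requires an acyclic-carrier style argument whose carriers (intersections of several shells $K(w_1)\cap\dots\cap K(w_j)$) must themselves be sufficiently connected --- a strictly stronger and unverified hypothesis. Without these two steps the connectivity lower bound, and with it the bound $\chi\geq d+2$ (which, as you correctly observe, cannot come from \cref{coro:clique_bound_eps_distance_graphs}(2) in this range, since $\lambda_d\geq\ell_d>\ell_{d-1}$, and must therefore come from the conjectural connectivity via \cref{thm:conn-neigh-Lovasz}), remains open. Your proposal is thus a plan, not a proof, consistent with the paper leaving the statement as a conjecture.
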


	\item It is also of interest to improve the bounds provided in \cref{table:bounds_chi_eps_dist_graphs_s2}, particularly for $\alpha\geq\ell_2\approx1.91$, since this regime behaves differently than the usual distance graphs on spheres: $\chi(\distgraph[0][\pi][2])=2$ while $\chi(\distgraph[\epsilon][\pi][2])=4$ for any $\epsilon>0$ small enough. 
\end{enumerate}
\printbibliography[title={References}, heading=bibintoc]

@article{Kahle-Martinez2020,
	author       = {Kahle, Matthew and Martinez-Figueroa, Francisco},
	year         = 2020,
	title        = {The chromatic number of random {B}orsuk graphs},
	journal      = {Random Structures \& Algorithms},
	fjournal     = {Random Structures \& Algorithms},
	volume       = 56,
	number       = 3,
	pages        = {838--850},
	issn         = {1042-9832},
	mrclass      = {05C80 (05C62 60G55)},
	mrnumber     = 4084191,
	mrreviewer   = {Andr\'{a}s S\'{a}ndor Pluh\'{a}r},
	doi          = {10.1002/rsa.20897},
	url          = {https://doi-org.proxy.lib.ohio-state.edu/10.1002/rsa.20897}
}

@article{Lovasz1978,
	author       = {Lov\'{a}sz, L.},
	year         = 1978,
	title        = {Kneser's conjecture, chromatic number, and homotopy},
	journal      = {J. Combin. Theory Ser. A},
	fjournal     = {Journal of Combinatorial Theory. Series A},
	volume       = 25,
	number       = 3,
	pages        = {319--324},
	issn         = {0097-3165},
	mrclass      = {05C15 (57M15)},
	mrnumber     = 514625,
	doi          = {10.1016/0097-3165(78)90022-5},
	url          = {https://doi-org.proxy.lib.ohio-state.edu/10.1016/0097-3165(78)90022-5}
}

@Article{Martinez2021,
	author       = {Martinez-Figueroa, Francisco},
	journal      = {arXiv e-prints},
	title        = {Generalized {B}orsuk Graphs},
	year         = {2021},
	month        = oct,
	eprint       = {arXiv:2110.06453},
	primaryclass = {math.CO},
}

@book{Matousek2002,
	author       = {J. Matou{\v{s}}ek},
	year         = 2002,
	title        = {Lectures on discrete geometry},
	publisher    = {Springer},
	address      = {New York},
	isbn         = {978-0-387-95374-8}
}

@Book{Penrose2003,
	author      = {M. Penrose},
	publisher   = {Oxford University Press},
	title       = {Random Geometric Graphs (Oxford Studies in Probability)},
	year        = {2003},
	isbn        = {0198506260},
	month       = jul,
	description = {Random Geometric Graphs (Oxford Studies in Probability) (Book, 2003)},
	interhash   = {b60d7fdb7802385cd7f9a55a1904a013},
	intrahash   = {f0d8a457c325467bbf2a559dc4add6aa},
	url         = {https://www.xarg.org/ref/a/0198506260/},
}

@article{Prosanov-Raigorodskii-Sagdeev2017,
	author       = {Prosanov, R. I. and Ra\v{i}gorodski\v{i}, A. M. and Sagdeev, A. A.},
	year         = 2017,
	title        = {Improvements of the {F}rankl-{R}\"{o}dl theorem and the geometric consequences},
	journal      = {Dokl. Akad. Nauk},
	volume       = 475,
	number       = 2,
	pages        = {137--139},
	issn         = {0869-5652},
	mrclass      = {05D10},
	mrnumber     = 3753559,
	doi          = {10.1134/s106456241704007x},
	url          = {https://doi-org.proxy.lib.ohio-state.edu/10.1134/s106456241704007x}
}

@Article{Delaunay1934,
	author       = {Delaunay, Boris},
	journal      = {Bulletin de l'Acad\'emie des Sciences de l'URSS. Classe des sciences math\'ematiques et na},
	title        = {Sur la sph\`ere vide. A la m\'emoire de {G}eorges {V}orono\"{i}},
	year         = {1934},
	number       = {6},
	pages        = {793--800},
	creationdate = {2022-03-09T18:41:25},
}

@article{Prosanov2018,
	author       = {Prosanov, R.},
	year         = 2018,
	title        = {Chromatic numbers of spheres},
	journal      = {Discrete Math.},
	fjournal     = {Discrete Mathematics},
	volume       = 341,
	number       = 11,
	pages        = {3123--3133},
	issn         = {0012-365X},
	mrclass      = {05C15 (52C22)},
	mrnumber     = 3854120,
	doi          = {10.1016/j.disc.2018.07.014},
	url          = {https://doi-org.proxy.lib.ohio-state.edu/10.1016/j.disc.2018.07.014}
}

@article{Raigorodskii2012,
	author       = {Raigorodskii, A. M.},
	year         = 2012,
	title        = {On the chromatic numbers of spheres in {${\Bbb R}^n$}},
	journal      = {Combinatorica},
	fjournal     = {Combinatorica. An International Journal on Combinatorics and the Theory of Computing},
	volume       = 32,
	number       = 1,
	pages        = {111--123},
	issn         = {0209-9683},
	mrclass      = {05C15 (05A20 05C35 52C10)},
	mrnumber     = 2927634,
	mrreviewer   = {T. Thrivikraman},
	doi          = {10.1007/s00493-012-2709-9},
	url          = {https://doi.org/10.1007/s00493-012-2709-9}
}

@Article{Lovasz1983,
	author     = {Lov\'{a}sz, L.},
	journal    = {Acta Sci. Math. (Szeged)},
	title      = {Self-dual polytopes and the chromatic number of distance graphs on the sphere},
	year       = {1983},
	issn       = {0001-6969},
	number     = {1-4},
	pages      = {317--323},
	volume     = {45},
	fjournal   = {Acta Universitatis Szegediensis. Acta Scientiarum Mathematicarum},
	mrclass    = {05C15 (52A25)},
	mrnumber   = {708798},
	mrreviewer = {J. Spencer},
}

@Article{BabsonKozlov2003,
	author     = {Babson, Eric and Kozlov, Dmitry N.},
	journal    = {Electron. Res. Announc. Amer. Math. Soc.},
	title      = {Topological obstructions to graph colorings},
	year       = {2003},
	issn       = {1079-6762},
	pages      = {61--68},
	volume     = {9},
	doi        = {10.1090/S1079-6762-03-00112-4},
	fjournal   = {Electronic Research Announcements of the American Mathematical Society},
	mrclass    = {05C15},
	mrnumber   = {2029466},
	mrreviewer = {Andrew Vince},
}

@Book{Soifer2014,
	author       = {Soifer, Alexander},
	publisher    = {Springer New York},
	title        = {The Mathematical Coloring Book},
	year         = {2014},
	isbn         = {1489996265},
	month        = nov,
	creationdate = {2022-02-28T11:01:48},
	ean          = {9781489996268},
	pagetotal    = {640},
	url          = {https://www.ebook.de/de/product/23387419/alexander_soifer_the_mathematical_coloring_book.html},
}

@Article{Erdos1946,
	author       = {P. Erd\H{o}s},
	journal      = {The American Mathematical Monthly},
	title        = {On Sets of Distances of n Points},
	year         = {1946},
	number       = {5},
	pages        = {248--250},
	volume       = {53},
	creationdate = {2022-02-28T11:03:33},
	publisher    = {Mathematical Association of America},
	url          = {http://www.jstor.org/stable/2305092},
}

@Article{KahleBirra2015,
	author       = {Kahle, Matthew and Taha, Birra},
	journal      = {Geombinatorics},
	title        = {New lower bounds on {$\chi(\Bbb R^d)$} for {$d=8,\dots,12$}},
	year         = {2015},
	issn         = {1065-7371},
	number       = {3},
	pages        = {109--116},
	volume       = {24},
	creationdate = {2022-02-28T12:01:53},
	fjournal     = {Geombinatorics},
	mrclass      = {05C15},
	mrnumber     = {3444769},
}

@Article{Raigorodskii2018,
	author       = {Cherkashin, Danila and Kulikov, Anatoly and Raigorodskii, Andrei},
	journal      = {Discrete Appl. Math.},
	title        = {On the chromatic numbers of small-dimensional {E}uclidean spaces},
	year         = {2018},
	issn         = {0166-218X},
	pages        = {125--131},
	volume       = {243},
	creationdate = {2022-02-28T12:03:06},
	doi          = {10.1016/j.dam.2018.02.005},
	fjournal     = {Discrete Applied Mathematics. The Journal of Combinatorial Algorithms, Informatics and Computational Sciences},
	mrclass      = {05C62 (05C15 05C69)},
	mrnumber     = {3804744},
	mrreviewer   = {Johan Kok},
	url          = {https://doi-org.proxy.lib.ohio-state.edu/10.1016/j.dam.2018.02.005},
}

@InCollection{KupavskiiRaigorodskii2009,
	author       = {Kupavskii, A. B. and Raigorodskii, A. M.},
	booktitle    = {European {C}onference on {C}ombinatorics, {G}raph {T}heory and {A}pplications ({E}uro{C}omb 2009)},
	publisher    = {Elsevier Sci. B. V., Amsterdam},
	title        = {On the chromatic numbers of small-dimensional {E}uclidean spaces},
	year         = {2009},
	pages        = {435--439},
	series       = {Electron. Notes Discrete Math.},
	volume       = {34},
	creationdate = {2022-02-28T12:03:41},
	doi          = {10.1016/j.endm.2009.07.072},
	mrclass      = {05C15 (05C10)},
	mrnumber     = {2591481},
	url          = {https://doi-org.proxy.lib.ohio-state.edu/10.1016/j.endm.2009.07.072},
}

@Article{Kupavskii2011,
	author       = {Kupavski\v{i}, A. B.},
	journal      = {Mat. Sb.},
	title        = {Colorings of spheres embedded in {$\Bbb R^n$}},
	year         = {2011},
	issn         = {0368-8666},
	number       = {6},
	pages        = {83--110},
	volume       = {202},
	creationdate = {2022-02-28T12:23:25},
	doi          = {10.1070/SM2011v202n06ABEH004169},
	fjournal     = {Matematicheski\v{i} Sbornik},
	mrclass      = {52C10 (05C15 52B05)},
	mrnumber     = {2849314},
	url          = {https://doi-org.proxy.lib.ohio-state.edu/10.1070/SM2011v202n06ABEH004169},
}

@Article{Parlier2016,
	author       = {Parlier, Hugo and Petit, Camille},
	journal      = {Indiana Univ. Math. J.},
	title        = {Chromatic numbers of hyperbolic surfaces},
	year         = {2016},
	issn         = {0022-2518},
	number       = {4},
	pages        = {1401--1423},
	volume       = {65},
	creationdate = {2022-02-28T12:25:26},
	doi          = {10.1512/iumj.2016.65.5842},
	fjournal     = {Indiana University Mathematics Journal},
	mrclass      = {05C15 (05C10 30F45)},
	mrnumber     = {3549206},
	mrreviewer   = {Kenta Noguchi},
	url          = {https://doi-org.proxy.lib.ohio-state.edu/10.1512/iumj.2016.65.5842},
}

@Article{DeCorte2019,
	author       = {DeCorte, Evan and Golubev, Konstantin},
	journal      = {Discrete Comput. Geom.},
	title        = {Lower bounds for the measurable chromatic number of the hyperbolic plane},
	year         = {2019},
	issn         = {0179-5376},
	number       = {2},
	pages        = {481--496},
	volume       = {62},
	creationdate = {2022-02-28T12:25:48},
	doi          = {10.1007/s00454-018-0027-8},
	fjournal     = {Discrete \& Computational Geometry. An International Journal of Mathematics and Computer Science},
	mrclass      = {52C10 (05C15 05C63 42B10 51M10 97K30)},
	mrnumber     = {3988123},
	mrreviewer   = {Alexey Glazyrin},
	url          = {https://doi-org.proxy.lib.ohio-state.edu/10.1007/s00454-018-0027-8},
}

@Article{Exoo2005,
	author       = {Exoo, Geoffrey},
	journal      = {Discrete \& Computational Geometry},
	title        = {$\varepsilon$-unit distance graphs},
	year         = {2005},
	number       = {1},
	pages        = {117--123},
	volume       = {33},
	creationdate = {2022-02-28T14:07:46},
	publisher    = {Springer},
}

@Article{Currie2015,
	author       = {Currie, James D and Eggleton, Roger B},
	journal      = {arXiv e-prints},
	title        = {Chromatic properties of the {E}uclidean plane},
	year         = {2015},
	month        = sep,
	creationdate = {2022-02-28T14:07:46},
	eprint       = {arXiv:1509.03667},
	primaryclass = {math.CO},
}

@Article{Bock2019,
	author       = {Felix Bock},
	journal      = {Acta Mathematica Universitatis Comenianae},
	title        = {Epsilon-colorings of strips},
	year         = {2019},
	number       = {3},
	volume       = {88},
	creationdate = {2022-02-28T14:07:46},
}

@Article{EdelShah97,
	author       = {Edelsbrunner, Herbert and Shah, Nimish R.},
	journal      = {Internat. J. Comput. Geom. Appl.},
	title        = {Triangulating topological spaces},
	year         = {1997},
	number       = {4},
	pages        = {365--378},
	volume       = {7},
	creationdate = {2022-02-28T14:07:46},
	fjournal     = {International Journal of Computational Geometry \& Applications},
}

@Article{Simmons1976,
	author       = {Simmons, Gustavus J.},
	journal      = {J. Austral. Math. Soc. Ser. A},
	title        = {The chromatic number of the sphere},
	year         = {1976},
	issn         = {0263-6115},
	number       = {4},
	pages        = {473--480},
	volume       = {21},
	creationdate = {2022-03-01T16:42:42},
	doi          = {10.1017/s1446788700019315},
	fjournal     = {Australian Mathematical Society. Journal. Series A. Pure Mathematics and Statistics},
	mrclass      = {05C15},
	mrnumber     = {419280},
	mrreviewer   = {P. Erdos},
	url          = {https://doi-org.proxy.lib.ohio-state.edu/10.1017/s1446788700019315},
}

@Article{Malen2015,
	author       = {Malen, Greg},
	journal      = {Geombinatorics},
	title        = {Measurable colorings of {$\Bbb S^2_r$}},
	year         = {2015},
	issn         = {1065-7371},
	number       = {4},
	pages        = {172--180},
	volume       = {24},
	creationdate = {2022-03-01T16:52:37},
	fjournal     = {Geombinatorics},
	mrclass      = {05C15},
	mrnumber     = {3443868},
	mrreviewer   = {Mamad Ghebleh},
}

@Article{Kostina2019,
	author       = {O. A. Kostina},
	journal      = {Mathematical Notes},
	title        = {On Lower Bounds for the Chromatic Number of Spheres},
	year         = {2019},
	month        = jan,
	number       = {1-2},
	pages        = {16--27},
	volume       = {105},
	creationdate = {2022-03-01T20:08:17},
	doi          = {10.1134/s0001434619010036},
	publisher    = {Pleiades Publishing Ltd},
}

@InProceedings{Leibon2000,
	author       = {Leibon, Greg and Letscher, David},
	booktitle    = {Proceedings of the {S}ixteenth {A}nnual {S}ymposium on {C}omputational {G}eometry ({H}ong {K}ong, 2000)},
	title        = {Delaunay triangulations and {V}oronoi diagrams for {R}iemannian manifolds},
	year         = {2000},
	pages        = {341--349},
	publisher    = {ACM, New York},
	creationdate = {2022-03-02T19:55:32},
	doi          = {10.1145/336154.336221},
	mrclass      = {53A07 (65D18 68U05)},
	mrnumber     = {1802283},
	url          = {https://doi-org.proxy.lib.ohio-state.edu/10.1145/336154.336221},
}

@Book{Hatcher2002,
	author       = {Hatcher, Allen},
	publisher    = {Cambridge University Press, Cambridge},
	title        = {Algebraic topology},
	year         = {2002},
	creationdate = {2022-03-02T21:09:16},
	mrclass      = {55-01 (55-00)},
	mrnumber     = {1867354},
	mrreviewer   = {Donald W. Kahn},
	pages        = {xii+544},
}

@Article{Coulson2002,
	author       = {D. Coulson},
	journal      = {Discrete Mathematics},
	title        = {A 15-colouring of 3-space omitting distance one},
	year         = {2002},
	issn         = {0012-365X},
	number       = {1},
	pages        = {83-90},
	volume       = {256},
	creationdate = {2022-03-04T13:50:44},
	doi          = {https://doi.org/10.1016/S0012-365X(01)00183-2},
	keywords     = {Voronoi region, Chromatic number, Lattices},
	url          = {https://www.sciencedirect.com/science/article/pii/S0012365X01001832},
}

@Article{BabsonKozlov2006,
	author     = {Babson, Eric and Kozlov, Dmitry N.},
	journal    = {Israel J. Math.},
	title      = {Complexes of graph homomorphisms},
	year       = {2006},
	issn       = {0021-2172},
	pages      = {285--312},
	volume     = {152},
	doi        = {10.1007/BF02771988},
	fjournal   = {Israel Journal of Mathematics},
	mrclass    = {52B70 (05C15 05E99)},
	mrnumber   = {2214465},
	mrreviewer = {Patricia L. Hersh},
}

@Book{Edelsbrunner1987,
	author       = {Edelsbrunner, Herbert},
	publisher    = {Springer-Verlag, Berlin},
	title        = {Algorithms in combinatorial geometry},
	year         = {1987},
	isbn         = {3-540-13722-X},
	series       = {EATCS Monographs on Theoretical Computer Science},
	volume       = {10},
	creationdate = {2022-03-09T18:25:57},
	doi          = {10.1007/978-3-642-61568-9},
	mrclass      = {68U05 (52-04 68Q25)},
	mrnumber     = {904271},
	mrreviewer   = {D. T. Lee},
	pages        = {xvi+423},
	url          = {https://doi-org.proxy.lib.ohio-state.edu/10.1007/978-3-642-61568-9},
}

@Article{Voronoi1908,
	author       = {Voronoi, Georges},
	journal      = {J. Reine Angew. Math.},
	title        = {Nouvelles applications des param\`etres continus \`a la th\'{e}orie des formes quadratiques. {D}euxi\`eme m\'{e}moire. {R}echerches sur les parall\'{e}llo\`edres primitifs},
	year         = {1908},
	issn         = {0075-4102},
	pages        = {198--287},
	volume       = {134},
	creationdate = {2022-03-09T18:30:05},
	doi          = {10.1515/crll.1908.134.198},
	fjournal     = {Journal f\"{u}r die Reine und Angewandte Mathematik. [Crelle's Journal]},
	mrclass      = {DML},
	mrnumber     = {1580754},
	url          = {https://doi-org.proxy.lib.ohio-state.edu/10.1515/crll.1908.134.198},
}

@Book{Diestel2017,
	author       = {Diestel, Reinhard},
	publisher    = {Springer, Berlin},
	title        = {Graph theory},
	year         = {2017},
	edition      = {Fifth},
	isbn         = {978-3-662-53621-6},
	series       = {Graduate Texts in Mathematics},
	volume       = {173},
	creationdate = {2022-03-09T23:03:50},
	doi          = {10.1007/978-3-662-53622-3},
	mrclass      = {05-01 (05Cxx)},
	mrnumber     = {3644391},
	pages        = {xviii+428},
	url          = {https://doi-org.proxy.lib.ohio-state.edu/10.1007/978-3-662-53622-3},
}

@Article{Boissonnat2017,
	author       = {Jean-Daniel Boissonnat and Ramsay Dyer and Arijit Ghosh and Nikolay Martynchuk},
	journal      = {Discrete {\&} Computational Geometry},
	title        = {An Obstruction to Delaunay Triangulations in Riemannian Manifolds},
	year         = {2017},
	month        = jul,
	number       = {1},
	pages        = {226--237},
	volume       = {59},
	creationdate = {2022-03-13T22:49:16},
	doi          = {10.1007/s00454-017-9908-5},
	publisher    = {Springer Science and Business Media {LLC}},
}
\end{document}